\def\C{{\mathbf C}}
\def\A{{\mathbf A}}
\newtheorem{theorem}{Theorem}
\newtheorem{lemma}[theorem]{Lemma}
\newtheorem{proposition}[theorem]{Proposition}
\newtheorem{claim}[theorem]{Claim}
\theoremstyle{definition}
\newtheorem{definition}[theorem]{Definition}
\theoremstyle{remark}
\newtheorem{remark}[theorem]{Remark}
\newcommand{\mm}[4]{\left(\begin{smallmatrix} #1 & #2\\ #3 & #4\end{smallmatrix}\right)}
\DeclareMathOperator{\val}{val}
\DeclareMathOperator{\SO}{SO}
\DeclareMathOperator{\Spin}{Spin}
\DeclareMathOperator{\GSpin}{GSpin}
\DeclareMathOperator{\GSp}{GSp}
\DeclareMathOperator{\GU}{GU}
\DeclareMathOperator{\SL}{SL}
\DeclareMathOperator{\GL}{GL}
\DeclareMathOperator{\Clif}{Clif}
\DeclareMathOperator{\charf}{char}
\DeclareMathOperator{\diag}{diag}
\DeclareMathOperator{\Stab}{Stab}
\begin{document}
\title{Unramified Godement-Jacquet theory for the spin similitude group}
\author{Aaron Pollack}
\address{Department of Mathematics\\ Institute for Advanced Study\\ Princeton, NJ USA}
\email{aaronjp@math.ias.edu}
\thanks{This work partially supported by the NSF through grant DMS-1401858.}
%2010 mathematics subject classification: 11F03 (modular and automorphic functions); 11F66 (Langlands $L$-functions; one variable Dirichlet series and functional equations); 11S40 (Zeta functions and $L$-functions)
%keywords: Godement-Jacquet theory; Clifford algebra; automorphic L-functions; orthogonal groups; Rankin-Selberg integrals; pullback formula

\begin{abstract} Suppose $F$ is a non-archimedean local field.  The classical Godement-Jacquet theory is that one can use Schwartz-Bruhat functions on $n \times n$ matrices $M_n(F)$ to define the local standard $L$-functions on $\GL_n$.  The purpose of this partly expository note is to give evidence that there is an analogous and useful ``approximate" Godement-Jacquet theory for the standard $L$-functions on the special orthogonal groups $\SO(V)$: One replaces $\GL_n(F)$ with $\GSpin(V)(F)$ and $M_n(F)$ with $\Clif(V)(F)$, the Clifford algebra of $V$.  More precisely, we explain how a few different local unramified calculations for standard $L$-functions on $\SO(V)$ can be done easily using Schwartz-Bruhat functions on $\Clif(V)(F)$.  We do not attempt any of the ramified or global theory of $L$-functions on $\SO(V)$ using Schwartz-Bruhat functions on $\Clif(V)$.  \end{abstract}

\maketitle

%\setcounter{tocdepth}{1}
%\tableofcontents
\section{Introduction}
Following Braverman-Kazhdan \cite{bk1, bk2, bk3}, a topic of interest in automorphic forms is extending the classical Godement-Jacquet theory for the standard $L$-function on $\GL_n$ to other reductive groups and $L$-functions.  Such a program has received a lot of attention recently, and from many different perspectives, for instance in \cite{lafforgue, ngo, bns, sakellaridis3, li_FE, getz, casselman}, to name only a few.  Recall that, if $F$ is a $p$-adic local field, $\pi$ is an unramified irreducible representation of $\GL_n(F)$, and $v_0, v_0^\vee$ are spherical vectors in $\pi$ and its contragredient normalized so that $\langle v_0^\vee, v_0\rangle =1$, the so-called Godement-Jacquet \cite{tamagawa,GJ} integral for the Standard $L$-function of $\pi$ is
\begin{equation}\label{GJ1} L(\pi,Std, s - \frac{n-1}{2}) = \int_{\GL_n(F)}{\Phi(g) \langle v_0^\vee, \pi(g) v_0 \rangle |\det(g)|^{s}\,dg}.\end{equation}
Here $\Phi$ is the characteristic function of $M_n(\mathcal{O}_F)$ and $\langle \cdot, \cdot \rangle$ is the canonical pairing between $\pi$ and its contragredient.  The purpose of this note is to explain how (\ref{GJ1}) has an analogue for quasisplit groups $\GSpin(n)$, and to give a few easy applications of this analogue.

Continue to assume that $F$ is a $p$-adic local field.  Suppose $G$ is a reductive $F$-group, $\pi$ is an unramified irreducible representation of $G(F)$, and $v_0, v_0^\vee$ are spherical vectors in $\pi$ and its contragredient with $\langle v_0^\vee, v_0 \rangle =1$.  Suppose $\rho: \,^LG \rightarrow \GL_N(\C)$ is a representation of the Langlands dual group of $G$.  It follows from the Satake isomorphism that there is a unique function $\Delta_{\rho}^s: G(F) \rightarrow \C$ that is bi-invariant under a fixed maximal compact and that satisfies
\[L(\pi,\rho,s) = \int_{G(F)}{\Delta_{\rho}^s(g) \langle v_0^\vee, \pi(g) v_0 \rangle \,dg}.\]
One can give explicit expressions for the $\Delta_{\rho}^s$ \cite{Li2016,sakellaridis}, which however are typically complicated.  Since these expressions are complicated, they can be difficult to use in certain computations\footnote{For instance, it would seem to be difficult to use such expressions in Lemma \ref{FClemma} below, which is then used in section \ref{section:globInt}.}.

The doubling integrals of Piatetski-Shapiro and Rallis \cite{gpsrDoubling} give relatively simple expressions for $\Delta_{\rho}^s$ when $\rho$ is the standard representation of the dual group of a classical group.  For example, consider the case of split $\SO(2n)$, the group of $g \in \SL_{2n}$ that satisfy $g\mm{}{1_n}{1_n}{} g^{t} = \mm{}{1_n}{1_n}{}$.  For $g\in \SO(2n)$, define the function $\Delta_{s}(g)$ as follows.  By the Cartan decomposition, $g = k_1 t k_2$, with $k_1, k_2$ in the maximal compact $K$ and $t = \diag(t_1,t_2, \ldots t_n, t_1^{-1}, t_2^{-1}, \ldots, t_n^{-1})$, with the $t_i \in \mathcal{O}_F$.  Now set 
\begin{equation}\label{deltaSO2n} \Delta_s(g) = |t_1 \cdots t_n|^{s}, \text{ when } g = k_1 tk_2, t = \diag(t_1,t_2, \ldots t_n, t_1^{-1}, t_2^{-1}, \ldots, t_n^{-1}), t_i \in \mathcal{O}_F.\end{equation}  Here $| \cdot |$ is the normalized $p$-adic absolute value on $F$.  It is proved in \cite{gpsrDoubling} that 
\begin{equation}\label{deltaSO2nL}\frac{L(\pi,Std,s+1-n)}{A(s)} = \int_{\SO(V)(F)}{\Delta_s(g) \langle v_0^\vee, \pi(g) v_0\rangle \,dg}\end{equation}
where $A(s)$ is a certain explicit product of abelian $L$-functions that do not depend on the Satake parameters of $\pi$.

By contrast, the $\Delta_{\rho}^s$ can be quite non-trivial outside of the case of standard representations of classical groups.  For example, such is the case for the degree $7$ ``Standard" $L$-function on $G_2$ \cite{gurSeg}, or the degree $8$ ``Spin" $L$-function on $\GSp_6$ \cite{andrianov}, and this nontriviality leads to the principal difficulty that must be overcome in the papers \cite{gurSeg,pollackShah}.

Suppose now that $(V,q)$ is a quadratic space, i.e., that $V$ is a finite-dimensional $F$ vector space and $q$ is a non-degenerate quadratic form on $V$.  The purpose of this note is to explain how a simple observation regarding the function $\Delta_{s}$ from (\ref{deltaSO2n}) can make calculations involving the standard $L$-function on orthogonal groups easier.  First, recall the group $\GSpin(V)$, which is a central $\GL_1$-extension of $\SO(V)$,
\begin{equation}\label{exactSeq}1 \rightarrow \GL_1 \rightarrow \GSpin(V) \rightarrow \SO(V) \rightarrow 1.\end{equation}
The group $\GSpin(V)$ may be defined concretely inside the even Clifford algebra $\Clif^{+}(V,q)$ constructed from $V,q$; see section \ref{GSpinVdefs}.  It has a similitude character $\nu: \GSpin(V) \rightarrow \GL_1$, the kernel of which is the group $\Spin(V)$. The observation about which this paper centers is the following.  Fix a maximal integral $\mathcal{O}_F$ lattice $\Lambda$ in $V$, which is specified precisely in subsection \ref{subsec:Ldef} below.  Then we may form the Clifford algebra $\Clif(\Lambda,q)$, a finite $\mathcal{O}_F$-module, and one has $\Clif(V,q) = \Clif(\Lambda,q) \otimes_{\mathcal{O}_F} F$.  Define $\Phi$ to be the characteristic function of $\Clif(\Lambda,q)$, i.e., for $g$ in $\GSpin(V)$, define $\Phi(g)$ to be $1$ if $g \in \Clif(\Lambda,q)$ and $0$ otherwise. 
\begin{theorem}\label{GJGSpin} Suppose $F$ is a $p$-adic local field, $V$ is such that $\SO(V)$ is quasisplit, $\dim(V) \geq 2$, and $\pi$ is a spherical representation of $\GSpin(V)$.  Under these assumptions, $L(\pi,Std,s)$ is defined, and for $Re(s)$ sufficiently large, one has
\begin{equation}\label{GJSpinInt}\frac{L(\pi,Std,s+1-\dim V/2)}{d^V(s)} = \int_{\GSpin(V)(F)}{\Phi(g) |\nu(g)|^{s} \langle v_0^\vee, \pi(g) v_0\rangle \,dg}\end{equation}
where
\[d^V(s) = \prod_{2 \leq j \leq \dim V-2: j \equiv 0 (2)}{\zeta(\omega_{\pi},2s-j)}.\]
\end{theorem}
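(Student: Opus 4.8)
The plan is to evaluate the right-hand side of (\ref{GJSpinInt}) using the Cartan decomposition of $\GSpin(V)(F)$ together with a Gindikin--Karpelevich--type generating-function identity, and to cross-check the result against the orthogonal doubling identity (\ref{deltaSO2nL}).

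First I would settle convergence and reduce to a Satake-transform computation. Since $\Clif(\Lambda,q)$ is a compact open $\mathcal{O}_F$-lattice in $\Clif(V,q)$ and $\GSpin(V)(F)$ is closed in $\Clif(V,q)$, the support $\GSpin(V)(F)\cap\Clif(\Lambda,q)$ of $\Phi$ is compact, and it is stable under left and right multiplication by the maximal compact subgroup $K:=\GSpin(V)(F)\cap\Clif(\Lambda,q)^{\times}$. Hence $\Phi\cdot|\nu|^{s}$ is bi-$K$-invariant, is supported on a compact set on which $|\nu|$ takes only finitely many values, and is therefore a finite linear combination of characteristic functions of double cosets $KgK$; in particular the integral in (\ref{GJSpinInt}) converges for all $s$ and equals the value at the Satake parameter of $\pi$ of the Satake transform of $\Phi\cdot|\nu|^{s}$, which is a rational function of $q^{-s}$. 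As $L(\pi,Std,\cdot)/d^V(\cdot)$ is likewise rational in $q^{-s}$, it suffices to verify (\ref{GJSpinInt}) for $\mathrm{Re}(s)$ large.

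Next I would compute $\sum_{\mu}\Phi(t_\mu)\,|\nu(t_\mu)|^{s}\,\langle v_0^\vee,\pi(t_\mu)v_0\rangle\,\mathrm{vol}(Kt_\mu K)$ over a set of Cartan representatives $t_\mu$. This needs two inputs: (i) the set of $\mu$ with $t_\mu\in\Clif(\Lambda,q)$, together with $\val\,\nu(t_\mu)$; and (ii) Macdonald's formula for the spherical function, equivalently the Gindikin--Karpelevich formula for the associated generating series in the Satake parameter. For (i) one works inside $\Clif(V,q)$ with the explicit torus of $\GSpin(V)$: for a suitably normalized hyperbolic pair $e,f\in\Lambda$ one checks that $z(t):=1+(t-1)\,ef$ lies in $\Clif(\Lambda,q)$ exactly when $t\in\mathcal{O}_F$, with $\nu(z(t))=t$, while the central $\GL_1$ of scalars $\lambda$ has $\nu(\lambda)=\lambda^{2}$; expanding a product of such $z(t_i)$'s and a scalar against the monomial basis of $\Clif(\Lambda,q)$ then pins down the cone of $\mu$ with $\Phi(t_\mu)\neq 0$ and expresses $\val\,\nu(t_\mu)$ as an explicit linear functional of $\mu$. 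Feeding (i) into (ii) turns the sum into a Cauchy-type series over that cone weighted by $(q^{-s})^{\val\,\nu(t_\mu)}$; performing the summation produces $L(\pi,Std,s+1-\dim V/2)$ in the numerator, while the geometric series attached to the unbounded directions of the cone assemble into precisely the product $d^V(s)$ in the denominator. As a conceptually cleaner variant, and as a check, one may instead integrate first over the central $\GL_1$ of (\ref{exactSeq}); since $\nu$ carries this $\GL_1$ onto the squares, the surviving weight on $\SO(V)(F)$ becomes the function $\Delta_s$ of (\ref{deltaSO2n}), and the central integration supplies exactly one further abelian $L$-factor, whose effect is to convert the normalizing factor of (\ref{deltaSO2nL}) into $d^V(s)$; (\ref{deltaSO2nL}) then finishes the argument.

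I expect the main obstacle to be part (i): controlling the lattice $\Clif(\Lambda,q)$ finely enough to determine exactly which Cartan representatives it contains and the precise value of the similitude on them. In particular one must handle the cocharacters of $\GSpin(V)$ not visible inside $\GL_1\times\Spin(V)$, the quasisplit-but-not-split case in which $\Lambda$ carries a small anisotropic kernel, and the low-dimensional cases $\dim V\in\{2,3,4\}$ (where the Clifford algebra is a matrix algebra and the statement is close to the classical Godement--Jacquet integral). Once the support of $\Phi$ and the $|\nu|$-weight are identified, the remaining generating-function manipulation exhibiting the denominator $d^V(s)$ is routine.
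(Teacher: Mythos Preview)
Your convergence paragraph contains a genuine error: the set $\GSpin(V)(F)\cap\Clif(\Lambda,q)$ is \emph{not} compact. Already the central $\GL_1$ contributes all $z(t)$ with $t\in\mathcal{O}_F\setminus\{0\}$, and more generally the Cartan representatives in $\Clif(\Lambda)$ form an unbounded cone, so $|\nu|$ takes infinitely many values on the support of $\Phi$ and $\Phi\,|\nu|^s$ is not a finite combination of Hecke functions. The integral only converges for $\mathrm{Re}(s)$ large, for the usual reason that the spherical function has at most polynomial growth while $|\nu|^s$ decays geometrically on the relevant cone. This is easy to fix, but as written the reduction to ``all $s$ at once'' is wrong.

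Setting that aside, your main strategy (Cartan decomposition plus Macdonald's formula and a cone summation) is in principle workable but is not what the paper does, and your ``cleaner variant'' via the center and (\ref{deltaSO2nL}) is exactly the derivation the paper acknowledges in the introduction but deliberately avoids. The paper's own proof is an induction on $\dim V$ using the \emph{Iwasawa} decomposition for the maximal parabolic $P=MN$ stabilizing the line $Ff_1$, with $M\simeq\GL_1\times\GSpin(V_1)$. One first computes the inner unipotent integral $\int_N\Phi(n(x)m(\lambda,y))\,dx$ via the factorization $\Phi(n(x)m(\lambda,y))=\charf(y,\lambda y)\charf(xy)$ and the volume formula of Lemma~\ref{meas}; the $\GL_1$-integration over $\lambda$ then peels off the two Tate factors $\zeta(\mu,s-\dim V_1/2)\,\zeta(\omega_\pi/\mu,s-\dim V_1/2)$ and a compensating $\zeta(\omega_\pi,2s-2)^{-1}$, leaving precisely the analogous integral $I_{V_1}(\tau,s-1)$ on $\GSpin(V_1)$. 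The recursion $d^V(s)=\zeta(\omega_\pi,2s-2)\,d^{V_1}(s-1)$ closes the induction, with the base case $V=V_E$ reducing to a Tate integral via Lemma~\ref{iota0}.

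The comparison: your Cartan approach would require pinning down exactly which dominant $t_\mu$ lie in $\Clif(\Lambda)$ (you correctly flag this as the hard part, and it is genuinely delicate in the quasisplit non-split case), and then a nontrivial Macdonald/Gindikin--Karpelevich manipulation. The paper's Iwasawa induction sidesteps both issues: the integrality condition factors cleanly through Claim~\ref{lemClaim}, the unipotent integral is handled by an elementary measure computation, and no explicit formula for the spherical function is needed. Your reduction to (\ref{deltaSO2nL}) is the shortest route of all, but it outsources the work to \cite{gpsrDoubling}; the paper's point is precisely to give a self-contained argument.
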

Here $\omega_{\pi}$ is the central character of $\pi$ and $\zeta(\omega_{\pi},s) = (1-\omega_{\pi}(\varpi)|\varpi|^{s})^{-1}$, where $\varpi$ is a uniformizer of $F$. Due to the presence of the abelian zeta functions $d^V(s)$, we say that $|\nu(g)|^{s}\Phi(g)$ gives an ``approximate" local $L$-function on $\GSpin(V)$, as opposed to an ``exact" local $L$-function, which would not have the $\zeta(\omega,s)$ factors in the denominator.

Once formulated, Theorem \ref{GJGSpin} essentially follows immediately from the above result (\ref{deltaSO2nL}) of Piatetski-Shapiro and Rallis, by integrating over the center $\GL_1 \subseteq \GSpin(V)$. (In the case of $\GSpin_5 = \GSp_4$, Theorem \ref{GJGSpin} goes back at least to Shimura \cite{shimura_MC} and Satake \cite[Appendix 1]{satake}.) Since the proof of (\ref{deltaSO2nL}) in \cite{gpsrDoubling} is a little involved, and our objective is to show the usefulness of considering the function $\Phi(g)|\nu(g)|^s$, we give a simple direct proof of Theorem \ref{GJGSpin} in section \ref{section:Lfcn}.  The similarity of (\ref{GJSpinInt}) with the case (\ref{GJ1}) of $\GL_n$ is clear: $\GL_n$ is replaced with $\GSpin(V)$, $\det: \GL_n \rightarrow \GL_1$ with $\nu: \GSpin(V) \rightarrow \GL_1$, and $M_n(F)$ replaced by $\Clif(V)$.  We remark that lifting from $\SO(V)$ to $\GSpin(V)$, which has the similitude character $\nu: \GSpin(V) \rightarrow \GL_1$, comports with the philosophy of \cite{bk2, ngo}.  We should point out, however, that one important difference with this setup compared with the case of $\GL_n$ is that $\GSpin(V)$ is not Zariski open in $\Clif(V)$ or $\Clif^{+}(V)$.

Since the function $d^V(s)$ of Theorem \ref{GJGSpin} depends only upon the central character of $\pi$, it is an easy matter to replace $\Phi(g)$ by a function $\Phi'(g)$ so that $|\nu(g)|^{s}\Phi'(g)$ gives an exact local $L$-function.  Since these ``basic functions" $\Phi'(g)$ have become of interest \cite{casselman, Li2016,sakellaridis}, we mention now the explicit formula for $\Phi'$. Choose a uniformizer $\varpi$ for $F$, set $q = |\varpi|^{-1}$, and denote by $\Phi_{n}$ the characteristic function of $\varpi^n \Clif(\Lambda) \setminus \varpi^{n+1}\Clif(\Lambda)$, so that $\Phi = \sum_{n \geq 0}{\Phi_n}$.  Define polynomials
$p'_{V,M}(t)$ by the generating series
\[\sum_{M \geq 0}{p'_{M,V}(t)X^M} = \prod_{0 \leq j \leq \dim(V)-2: j \equiv 0 (2)}{(1-t^j X)^{-1}},\]
and set $\Phi'(g) = \sum_{M \geq 0}{p'_{V,M}(q)\Phi_{M}(g)}$. Then one deduces from Theorem \ref{GJGSpin} that
\begin{equation}\label{phi':intro}L(\pi,Std,s+1-\dim V/2) = \int_{\GSpin(V)(F)}{\Phi'(g) |\nu(g)|^{s} \langle v_0^\vee, \pi(g) v_0\rangle \,dg}.\end{equation}
See Remark \ref{rmk:exactL}.  Since we only use the ``approximate" $L$-function $\Phi(g)|\nu(g)|^{s}$ below, and not the ``exact" $L$-function $\Phi'(g)|\nu(g)|^{s}$, we have not attempted to deduce the expression (\ref{phi':intro}) from \cite{bk3}, \cite{sakellaridis}, or \cite{Li2016}.

The body of this paper consists of showing how one can apply Theorem \ref{GJGSpin}, together with the pullback formula \cite{garrett2, shimuraBook} (reviewed in section \ref{section:pullback}), to prove somewhat efficiently that various Rankin-Selberg integrals on $\SO(V)$ represent its standard $L$-function.  We give three old examples, and one new example.  The three old examples are \ref{item:double}, \ref{item:period}, and \ref{item:bessel} of the following list, while the new example is \ref{item:KS}.
\begin{enumerate}[label=(\alph*)]
\item \label{item:double} The doubling integral \cite{gpsrDoubling}, in which one puts cusp forms on $\SO(n) \times \SO(n)$ and a Siegel Eisenstein series on $\SO(2n)$;
\item \label{item:period} The integral of Murase-Sugano \cite{muraseSugano}, which is vastly generalized in \cite{gpsr}, in which one puts a cusp form on $\SO(n)$ and a certain Eisenstein series on $\SO(n+1)$;
\item \label{item:bessel} The integral of Sugano \cite{sugano}, in which one puts a cusp form on $\SO(n)$ and a certain Eisenstein series on $\SO(n-1)$. 
\item \label{item:KS} A generalization of the integral of Kohnen-Skoruppa \cite{ks} (see also \cite{pollackShahKS}) from $\GSp_4 = \GSpin_5$ to $\GSpin(V)$, $\dim V \geq 5$.  This integral involves a cusp form, an Eisenstein series, and a special function all on $\GSpin(V)$.
\end{enumerate}
For \ref{item:double} (resp. \ref{item:period}), one could instead proceed as efficiently by using the result (\ref{deltaSO2nL}) of Piatetski-Shapiro and Rallis (resp. the result (\ref{deltaSO2nL}) and the pullback formula).  In examples \ref{item:bessel} and \ref{item:KS}, one seems to obtain a genuine advantage by using Theorem \ref{GJGSpin}.

The above-mentioned computations may be contrasted with those of Braverman-Kazhdan \cite[Section 7]{bk3}.  The authors of \emph{loc. cit.} give an exact expression for $\Delta_{Std}^s$ on classical groups (including $\GSpin(V)$), which is well-suited to computing with the doubling integral of Piatetski-Shapiro--Rallis.  Furthermore, they use this expression to reprove the analytic properties of the global $L$-function $L(\pi,Std,s)$ via the method of Godement-Jacquet.  The main point of this note is that by using the approximate Godement-Jacquet function $\Phi(g)|\nu(g)|^s$ instead of the exact function of \cite{bk3}, one can more easily compute in a wider variety of scenarios (albeit with weaker results.)

We emphasize that not much in this paper consists of proofs of new theorems; the standard $L$-functions of automorphic forms on orthogonal groups are very well understood, for example from \cite{gpsr}.  Our goal is simply to impress upon the reader the similarity between $\GL_n$ and $\GSpin(V)$ in the context of Godement-Jacquet theory, and to show the usefulness of using Schwartz-Bruhat functions on $\Clif(V)$ to compute with the standard $L$-function on $\SO(V)$.  It is an interesting question to see if some other aspects of Godement-Jacquet theory on $\GL_n$ can be extended to $\SO(V)$ using Schwartz-Bruhat functions on $\Clif(V)$.  The reader should see \cite{getz} for some global aspects, and also the paper \cite{li_FE} for some different local aspects.

We now give a summary of the layout of the paper.  In section \ref{section:prelim}, we give a few preliminaries on the group $\GSpin(V)$, including its definition and the definition of the standard $L$-function.  In section \ref{section:twoLemmas}, we give two lemmas that are used in rest of the paper.  In section \ref{section:Lfcn} we prove Theorem \ref{GJGSpin}, and in section \ref{section:pullback} we review the pullback formula.  Finally, in section \ref{section:globInt}, we explain how the pullback formula and Theorem \ref{GJGSpin} can be used to quickly calculate the four global integrals mentioned above.

$\textrm{ }$

\noindent \textbf{Acknowledgments} We thank Nadya Gurevich for enlightening conversations at MSRI in 2014.  We also thank William Casselman, James Cogdell, Paul Garrett, Jayce Getz, Wen-Wei Li, and Shrenik Shah for their helpful comments on an earlier version of this manuscript.  Finally, we thank the anonymous referee for his or her comments, which have improved the writing of this paper.

\section{Preliminaries}\label{section:prelim}
In this section we give some preliminaries on the group $\GSpin(V)$ and its standard $L$-function.  For a primer on Clifford algebras, the reader might see \cite[Chapters 1-3]{meinrenken}.

\subsection{The group $\GSpin(V)$}\label{GSpinVdefs}
Suppose $V,q$ is a quadratic space. For $v_1, v_2 \in V$ we set $(v_1, v_2) = q(v_1+v_2) - q(v_1) - q(v_2)$ the induced symmetric bilinear form on $V$.

The group $\GSpin(V)$ may be defined concretely as follows.  Set $\Clif(V,q)$ the Clifford algebra of $(V,q)$, the quotient of the tensor algebra $T(V)$ of $V$ by the two-sided ideal generated by $v^2 -q(v)$, for $v \in V$.  We frequently write $\Clif(V)$ for $\Clif(V,q)$, because the quadratic form $q$ will be fixed.  The map $V \rightarrow T(V)$ induces a canonical injection $V \rightarrow \Clif(V)$.  Denote by $\Clif^{\pm}(V,q)$ the even and odd degree parts, so that $\Clif(V) = \Clif^{+}(V) \oplus \Clif^{-}(V)$. The Clifford algebra $\Clif(V,q)$ has an involution, $y \mapsto y^*$, induced by reversing the order of products in the tensor algebra $T(V)$: If $v_j \in V$, $(v_1 \cdots v_r)^* = v_r \cdots v_1$.  The algebraic group $\GSpin(V)$ is defined as
\[\GSpin(V) = \{(g,\nu) \in \Clif^{+}(V) \times \GL_1: g^*g = \nu \text{ and } g^*Vg \subseteq V\}.\]
Note that $g^{-1} = \nu^{-1}(g) g^*$.  The character $\nu: \GSpin(V) \rightarrow \GL_1$ is called the similitude, and $\Spin(V)$ is the subgroup of $\GSpin(V)$ where $\nu =1$.  Note that if $V' \rightarrow V$ is an isometric inclusion of non-degenerate quadratic spaces, this inclusion induces maps $\Clif(V') \rightarrow \Clif(V)$, $\Clif^{\pm}(V') \rightarrow \Clif^{\pm}(V)$, and $\GSpin(V') \rightarrow \GSpin(V)$.

The conjugation action of $\GSpin(V)$ on the embedded $V \subseteq \Clif^{-}(V)$, $v \cdot g = g^{-1}vg$, defines a map $\GSpin(V) \rightarrow \GL(V)$ whose image is immediately seen to land in the orthogonal group $\mathrm{O}(V)$.  Set $Z_{V} \simeq \GL_1 \subseteq \GSpin(V)$ to be the group of $(t,t^2) \in \Clif^{+}(V) \times \GL_1$, which is clearly in the center of $\GSpin(V)$.  This conjugation action of $\GSpin(V)$ on $V$ gives rise to the sequence 
\begin{equation}\label{SEStoSOV}1 \rightarrow Z_{V} \rightarrow \GSpin(V) \rightarrow \SO(V) \rightarrow 1,\end{equation}
exhibiting $\GSpin(V)$ as a central $\GL_1$-extension of $\SO(V)$. Note that we always let $\GSpin(V)$ and $\SO(V)$ act on the right of $V$.

\subsection{Parabolic subgroups}\label{subsec:parabolic}  Suppose $\mathcal{U}: U_1 \subseteq U_2 \subseteq \cdots \subseteq U_r$ is a chain of totally isotropic subspaces of $V$.  Define $P_{\mathcal{U}}$ to be the subgroup of $\SO(V)$ or $\GSpin(V)$ (depending on context) that stabilizes this isotropic flag.  Then $P_{\mathcal{U}}$ is a parabolic subgroup, and every parabolic subgroup arises in this way.  See \cite{conrad}, for example, for a clear treatment of the parabolic subgroups of the special orthogonal group.

Suppose $U$ is an isotropic subspace; we consider the parabolic $P_U$.  The unipotent radical $N_{U}$ of $P_U$ has a two-step filtration $N_U \supseteq N_U' \supseteq 1$.  The exponential map $\exp: U^\perp \otimes U \rightarrow N_U$ considered inside the Clifford algebra identifies $\wedge^2 U \simeq N_U'$ and $(U^\perp/U) \otimes U$ with $N_U/N_U'$. If $U$ is $1$-dimensional, then $N_U' = 1$, while if $\dim(V)$ is even and $\dim(U) = \dim(V)/2$, $N_U = N_U'$.  In all other cases, both $N_U/N_U'$ and $N_U'$ are nontrivial.

We now make some special notation for the parabolic $P = P_{Ff_1}$ that stabilizes the isotropic line $Ff_1$.  Choose an isotropic $e_1$ with $(e_1,f_1) = 1$, and denote by $V_1 \subseteq V$ the perpendicular space to $Fe_1 \oplus Ff_1$, so that $V = Fe_1 \oplus V_1 \oplus Ff_1$.  Write $N$ for the unipotent radical of $P$ and $M$ for the Levi subgroup of $P$ that stabilizes the decomposition $V = Fe_1 \oplus V_1 \oplus Ff_1$.  The map $n: V_1 \rightarrow N$, $x \mapsto n(x) = 1 + f_1x$ identifies $V_1$ with $N$.  The map $m: \GL_1 \times \GSpin(V_1) \rightarrow M$ defined by $m(\lambda,y) = m_1^*(\lambda)y = (e_1f_1 + \lambda f_1 e_1) y$ identifies the Levi subgroup $M$ with $\GL_1 \times \GSpin(V_1)$.  Here and below, multiplications such as $f_1x$ and $e_1f_1$ take place inside the $\Clif(V)$.

Represent elements of $V$ as triples $(\alpha,v,\delta)$ according to the decomposition $V = Fe_1 \oplus V_1 \oplus Ff_1$.  We compute
\begin{align*} (\alpha,v,\delta) \cdot n(x) &= (\alpha e_1 + v + \delta f_1) \cdot n(x) \\ &= (\alpha, v+ \alpha x, \delta - (v,x) - \alpha q(x)).\end{align*}
One has $\nu(m(\lambda,y)) = \lambda \nu(y)$, and
\[ e_1 \cdot m(\lambda, y) = \lambda e_1; \quad v \cdot m(\lambda, y) = v \cdot y; \quad f_1 \cdot m(\lambda, y) = \lambda^{-1} f_1.\]

Suppose $\dim V = 2n+1$ is odd.  Then $\SO(V)$ (and thus $\GSpin(V)$) is quasisplit precisely when it is split, which occurs when $V$ has an isotropic subspace of dimension $n$.  When $\dim V = 2n+2$ is even, then $\SO(V)$ and $\GSpin(V)$ are quasisplit when $V$ has an $n$-dimensional isotropic subspace, say $U$.  Then $V = U^\vee \oplus V_E \oplus U$, where 
\begin{itemize}
\item $U^\vee$ is also isotropic, and is canonically isomorphic to $\mathrm{Hom}_{F}(U,F)$ under the symmetric pairing;
\item $V_E$ is orthogonal to $U^\vee \oplus U$;
\item there is a (unique) quadratic \'etale $F$-algebra $E$ so that $V_E$ with its induced quadratic form $q|_{V_E}$ is isometric with $E$ and its norm form: $n_{E/F}: E \rightarrow F$.\end{itemize}
The group $\GSpin(V)$ is split precisely when $E = F \times F$ is the split quadratic \'{e}tale $F$-algebra.  Below, we abuse language and say that $V$ is split or quasisplit if $\SO(V)$ (and thus $\GSpin(V)$) is so.

\subsection{Maximal torus}\label{subsec:maxtorus} Suppose $V = U^\vee \oplus V_{E} \oplus U$ is quasisplit, so that $E$ is a quadratic \'{e}tale extension of $F$ if $\dim(V) = 2n+2$ is even, and is $F$ if $\dim(V)=2n+1$ is odd.  Write $N_{E/F}: E \rightarrow F$ for the norm map when $E$ is quadratic over $F$ and for the map $t \mapsto t^2$ when $E=F$.  Let $e_1,\ldots, e_n$ be a basis of $U^\vee$ and $f_1, \ldots f_n$ the dual basis of $U$ so that $(e_i,f_j) = \delta_{ij}$.  Denote by $T$ the subgroup of $\GSpin(V)$ that preserves the decomposition $V = Fe_1 \oplus \cdots \oplus Fe_n \oplus V_{E} \oplus Ff_n \oplus \cdots \oplus Ff_1$.  Then $T$ is a maximal torus of $\GSpin(V)$, and is a maximal split torus of $\GSpin(V)$ when $V$ is split.

For $i=1, 2, \ldots, n$, define $m_i: \GL_1 \rightarrow \GSpin(V)$ via $m_i(t) = te_i f_i + f_i e_i$ and $m_i^*(t) = (m_i(t))^* = e_if_i+tf_ie_i$.  Furthermore, define $z: \GL_1 \rightarrow Z_{V} \subseteq \GSpin(V)$ via $z(t) = t$.  One has $m_i m_i^* = z$, and $m_1, \ldots, m_n$ and $z$ land in $T$. If $\dim(V) = 2n+1$ is odd, the cocharacters $z, m_1, \ldots, m_n$ form a basis of the cocharacters of $T$.

If $E$ is as above, $\GSpin(E)$ is canonically isomorphic with $E^\times$, as the following lemma shows.
\begin{lemma}\label{iota0} Suppose $E=F$ or $E$ is a quadratic \'etale extension $F$, and write $V_{E}$ for the quadratic space $E$ with quadratic form given by the norm form of $E$.  Write $\iota_1: V_{E} \rightarrow \Clif(V_{E})$ for the standard inclusion.  Then the map $\iota_0: E \rightarrow \Clif^{+}(V_{E})$ defined by $\iota_0(\lambda) = \iota_1(1)\iota_1(\lambda)$ is an algebra isomorphism, with $\iota_0(\lambda)\iota_0(\lambda)^*= N_{E/F}(\lambda)$.  It follows that $\iota_0$ restricts to an isomorphism $E^\times \simeq \GSpin(V_{E})$.  Furthermore, $\iota_0$ restricts to an isomorphism $\mathcal{O}_E \rightarrow \Clif^+(\mathcal{O}_E)$. \end{lemma}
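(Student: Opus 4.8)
The plan is to work directly with a basis of $V_E$ adapted to the norm form and verify the multiplicativity of $\iota_0$ by hand, then read off the similitude statement and the integrality statement as immediate consequences. First I would treat the two cases uniformly by picking an element $\omega \in E$ with $E = F[\omega]$ and $\omega^2 = a$ for some $a \in F^\times$ (taking $\omega = 1$, $a = 1$ when $E = F$ is not allowed, so one instead takes $E = F \oplus F$ with the split norm form and a hyperbolic basis; I would simply run the two sub-cases in parallel, or reduce the $E = F$ case to the computation $\Clif(V_E) \cong M_2(F)$ under which $\iota_0$ is visibly the identity map $F \to F \cdot 1$). In the quadratic case, write $V_E$ with orthogonal basis $\{1, \omega\}$, so that $q(1) = 1$, $q(\omega) = -a$ (signs depending on the normalization of $q$ versus $N_{E/F}$; I would fix the convention $(v_1,v_2) = q(v_1+v_2)-q(v_1)-q(v_2)$ as in the paper and track constants accordingly), and $(1,\omega) = 0$. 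Inside $\Clif(V_E)$ the elements $\iota_1(1)$ and $\iota_1(\omega)$ then anticommute, $\iota_1(1)^2 = 1$, $\iota_1(\omega)^2 = q(\omega)$, and $\Clif^+(V_E)$ is the span of $1$ and $\iota_1(1)\iota_1(\omega)$.

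The key computation is that $\iota_0(\lambda) = \iota_1(1)\iota_1(\lambda)$ is $F$-linear in $\lambda$ (clear) and multiplicative: for $\lambda, \mu \in E$ one must check $\iota_1(1)\iota_1(\lambda\mu) = \iota_1(1)\iota_1(\lambda) \cdot \iota_1(1)\iota_1(\mu)$ in $\Clif^+(V_E)$. It suffices to check this on the basis $\{1, \omega\}$ of $E$: the case where one of $\lambda,\mu$ is $1$ is trivial, and the remaining case $\lambda = \mu = \omega$ reduces to comparing $\iota_1(1)\iota_1(\omega^2) = \iota_1(1)\iota_1(a) = a\,\iota_1(1)^2 = a$ with $(\iota_1(1)\iota_1(\omega))^2 = \iota_1(1)\iota_1(\omega)\iota_1(1)\iota_1(\omega) = -\iota_1(1)^2\iota_1(\omega)^2 = -q(\omega)$, using the anticommutation $\iota_1(\omega)\iota_1(1) = -\iota_1(1)\iota_1(\omega)$. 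These agree once $q(\omega) = -a$, i.e. once the quadratic form on $V_E$ is genuinely the norm form $N_{E/F}$; this is exactly the compatibility that pins down the normalization and is the one place where a sign slip would be fatal, so I would state the convention explicitly and do this one line carefully. Since $\iota_0$ is an $F$-linear algebra homomorphism between two $2$-dimensional $F$-algebras, sending $1$ to $1$, and is injective (its image contains $1$ and $\iota_1(1)\iota_1(\omega) \neq 0$, which are $F$-linearly independent), it is an isomorphism. For the involution: $\iota_0(\lambda)^* = (\iota_1(1)\iota_1(\lambda))^* = \iota_1(\lambda)^*\iota_1(1)^* = \iota_1(\lambda)\iota_1(1)$, so $\iota_0(\lambda)\iota_0(\lambda)^* = \iota_1(1)\iota_1(\lambda)\iota_1(\lambda)\iota_1(1) = \iota_1(1)q(\lambda)\iota_1(1) = q(\lambda) = N_{E/F}(\lambda)$.

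Given this, the remaining claims follow formally. By definition $\GSpin(V_E) = \{(g,\nu) \in \Clif^+(V_E)^\times : g^*g = \nu,\ g^*V_Eg \subseteq V_E\}$; but $\Clif^+(V_E)$ is commutative (it is the image of the commutative algebra $E$), and $g^*g = N_{E/F}(g) \in F^\times$ is automatically a scalar for $g = \iota_0(\lambda)$ whenever $\lambda \in E^\times$, while the condition $g^* V_E g \subseteq V_E$ is automatic since $\dim V_E = 2$ and one checks on the basis that conjugation preserves $\iota_1(V_E)$ (equivalently, it corresponds to the standard action of $E^\times$ on $E$ by multiplication twisted by the nontrivial Galois element, which preserves $V_E$). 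Hence $\iota_0$ identifies $E^\times$ with $\GSpin(V_E)$, the similitude $\nu$ pulling back to $N_{E/F}$. Finally, the integrality statement: with the basis $\{1,\omega\}$ chosen so that $\mathcal{O}_E = \mathcal{O}_F \cdot 1 \oplus \mathcal{O}_F \cdot \omega$ (or the appropriate $\mathcal{O}_F$-basis of the maximal order — in the ramified and inert cases one must pick $\omega$ to generate $\mathcal{O}_E$, which is possible since $\mathcal{O}_E$ is monogenic over $\mathcal{O}_F$), the lattice $\Clif^+(\mathcal{O}_E)$ is spanned over $\mathcal{O}_F$ by $1$ and $\iota_1(1)\iota_1(\omega) = \iota_0(\omega)$, and $\iota_0$ carries the $\mathcal{O}_F$-basis $\{1,\omega\}$ of $\mathcal{O}_E$ to this $\mathcal{O}_F$-basis, hence is an isomorphism $\mathcal{O}_E \xrightarrow{\sim} \Clif^+(\mathcal{O}_E)$. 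I expect the only genuine obstacle to be bookkeeping around the normalization of $q$ versus $N_{E/F}$ and the choice of $\omega$ generating $\mathcal{O}_E$ in the non-maximal-ramification cases; everything else is a direct unwinding of definitions.
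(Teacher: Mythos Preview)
Your proof is correct in substance but takes a more hands-on route than the paper. Where you verify multiplicativity of $\iota_0$ by explicit computation on the basis $\{1,\omega\}$ (reducing to the single identity $(\iota_1(1)\iota_1(\omega))^2 = -q(\omega) = a$), the paper argues more conceptually: it first checks the norm identity $\iota_0(\lambda)\iota_0(\lambda)^* = N_{E/F}(\lambda)$ (your computation of this is the same), observes that $\iota_0$ is a linear isomorphism by dimension count, and then invokes the fact that $E$ is a composition algebra to conclude that a unital norm-preserving linear isomorphism is automatically multiplicative. Your approach has the advantage of being self-contained and not relying on composition-algebra facts; the paper's is shorter. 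For the stability condition $g^*V_Eg \subseteq V_E$, the paper's argument is cleaner than yours: it simply notes that $\iota_1(V_E) = \Clif^{-}(V_E)$ (trivially, since $\dim V_E \leq 2$), and conjugation by elements of $\Clif^+$ preserves the grading, so stability is automatic without any basis check.

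Two small corrections. First, you conflate the case $E = F$ (one-dimensional $V_E$, arising when $\dim V$ is odd) with the split quadratic \'etale case $E = F \times F$ (two-dimensional $V_E$); these are distinct cases in the lemma and in the paper. Second, when $E = F$ the Clifford algebra $\Clif(V_E)$ is $F[e]/(e^2-1) \cong F \times F$, not $M_2(F)$; your conclusion that $\iota_0$ is the identity $F \to F = \Clif^+(V_E)$ is nonetheless correct. Neither issue affects the validity of your main argument, which goes through as written in the genuine quadratic case and is trivial when $E=F$.
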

\begin{proof} It is clear that $\iota_0$ is a linear isomorphism, and that is has the stated property about norms.  Since $E$ is a composition algebra, the ring isomorphism follows.  The integrality statement is also clear.

Since $V_{E} = \iota_1(V_E) = \Clif^{-}(V_E)$ in both cases $\dim E = 1$ or $\dim E = 2$, the condition $g^*V_{E} g \subseteq V_{E}$ on $\GSpin(V_E)$ is automatically satisfied.  The isomorphism $\iota_0: E^\times \rightarrow \GSpin(V_{E})$ follows.
\end{proof}

Let $z_{E}: E^\times \rightarrow \GSpin(V_E) \rightarrow T \subseteq \GSpin(V)$ denote the cocharacter of $T$ induced by the map $\iota_0$ of Lemma \ref{iota0}.  In both the case of $\dim(V)$ even or odd, one has a direct product $T = m_1 \times m_2 \times \cdots m_n \times z_{E}$.  (When $\dim(V)$ is odd, $z_E = z_{V}$ is the central cocharacter.)

\subsection{Maximal compact and $L$-function}\label{subsec:Ldef} Suppose $F$ is a $p$-adic local field.  Let $e_i, f_j$ and $E$ be as above.  Define the lattice $\Lambda \subseteq V$ as 
\[\Lambda = \mathcal{O}_{F}e_1 \oplus \cdots \oplus \mathcal{O}_{F}e_n \oplus V_{\mathcal{O}_E} \oplus \mathcal{O}_{F}f_n \oplus \cdots \oplus \mathcal{O}_{F}f_1,\]
where $V_{\mathcal{O}_E} \subseteq V_{E}$ is the submodule identified with $\mathcal{O}_E$ under the fixed identification of $V_E$ with $E$.  Set $K$ to be the subgroup of $\GSpin(V)$ that stabilizes the $\mathcal{O}_F$-lattice $\Lambda$, and for which the similitude lands in $\mathcal{O}_F^\times$.  Then $K$ is a maximal compact subgroup of $\GSpin(V)(F)$ \cite{hinaSugano}, and one has an Iwasawa decomposition $\GSpin(V)(F) = B(F)K$, where $B$ is the Borel subgroup of $\GSpin(V)$ that stabilizes the isotropic flag $U_1 \subseteq \cdots \subseteq U_n$, where $U_j = F f_1 \oplus \cdots \oplus Ff_j$.

Suppose that $\pi$ is an irreducible $K$-spherical representation of $\GSpin(V)(F)$.  We recall the standard $L$-function of $\pi$.  Such a $\pi$ is the spherical subresentation of $Ind_{B}^{\GSpin(V)}(\delta_{B}^{1/2}\alpha)$ for an unramified character $\alpha: T(F)\rightarrow \C^\times$.  If $\pi$ is such, define
\[L(\pi,Std,s) = L_{E,*}(\alpha_E,s)\prod_{1 \leq i \leq n}{L(\alpha_i,s)L(\alpha_i',s)}\]
where
\begin{itemize}
\item $\alpha_E: E^\times \rightarrow \C^\times$, $\alpha_i: F^\times \rightarrow \C^\times$, $\alpha_i': F^\times \rightarrow \C^\times$ are the pullbacks of $\alpha$ via the maps $z_E: E^\times \rightarrow T(F)$ $m_i: F^\times \rightarrow T(F)$, and $m_i^*: F^\times \rightarrow T(F)$;
\item $L(\alpha_i,s)$ and $L(\alpha_i',s)$ are the usual Tate local $L$-functions.  That is, $L(\alpha_i,s) = (1-\alpha_i(\varpi)|\varpi|^s)^{-1}$ for a uniformizer $\varpi$ of $F$ and similarly for $L(\alpha_i',s)$.
\item $L_{E,*}(\alpha_{E},s)$ is defined to be the usual Tate local $L$-function $L_{E}(\alpha_{E},s)$ of $\alpha_{E}$ if $E/F$ is quadratic, and is defined to be $1$ if $E=F$.  This is, if $E$ is a quadratic field extension of $F$, $L_{E,*}(\alpha_E,s) =(1-\alpha_{E}(\varpi)|\varpi|^{2s})^{-1}$, and if $E = F\times F$ with uniformizers $\varpi_1, \varpi_2$, $L_{E,*}(\alpha_{E},s) = (1-\alpha_{E}(\varpi_1)|\varpi|^s)^{-1}(1-\alpha_{E}(\varpi_2)|\varpi|^s)^{-1}$.\end{itemize}

\subsection{A filtration on $\Clif(V)$}\label{subsec:filtration} The results in this subsection will only be used in section \ref{section:pullback}.  Suppose $U \subseteq V$ is an isotropic subspace.  Associated to $U$, one can define an increasing filtration $W^{U}_{\bullet}$ on $\Clif(V)$ as follows. First define an increasing filtration on $V$ via $W^U_{-1}V = U$, $W^U_0 V = U^\perp$, and $W^U_{1} V = V$.  Define $W_k^{U}V = 0$ if $k < -1$, and $W_k^U V = V$ if $k > 1$.  The filtration $W^U$ on $V$ induces a filtration on $\Clif(V)$ via
\begin{equation}\label{FilUdef} W^U_k \Clif(V) = \{x \in \Clif(V): x = \sum{x_1 x_2 \cdots x_r}, x_i \in W^U_{\alpha_i}V, \alpha_1 + \cdots + \alpha_r \leq k\}.\end{equation}
Note that the filtered pieces $W^U_k \Clif(V)$ are stable under the involution $*$.  We will see momentarily that the filtration $W^U$ on $\Clif(V)$ restricts to the filtration $W^U$ on $V$. 

First, choose $V_0, U^\vee$ so that $V = U^\vee \oplus V_0 \oplus U$ and $U^\perp = V_0 \oplus U$.  Denote by $h_U: \GL_1 \rightarrow \GSpin(V)$ the map satisfying $\nu(h_U(t)) = t^{\dim(U)}$ and
\[h_U(t)^{-1}v h_U(t) = \begin{cases} tv &\mbox{if } v \in U^\vee\\ v &\mbox{if } v \in V_0\\ t^{-1} v &\mbox{if } v \in U.\end{cases}\] 
(One can explicitly construct such an $h_U$ using the maps $m_i(t)$ of subsection \ref{subsec:maxtorus}, and thus $h_U$ exists.  It is unique by (\ref{SEStoSOV}).) Now, the fact that the filtration $W^U$ on $\Clif(V)$ restricts to the filtration $W^U$ on $V$ follows by considering the conjugation action of this cocharacter on the expressions in (\ref{FilUdef}).

\begin{definition} For $U$, $W^U$ as above, define $P(U,V) = W^U_0\Clif(V)$ and $N(U,V) = W^U_{-1}\Clif(V)$.\end{definition}
One has the following lemma.  Recall that $P_U$ denotes the parabolic subgroup of $\GSpin(V)$ that stabilizes the isotropic subspace $U$ of $V$ and $N_U$ denotes its unipotent radical.
\begin{lemma}\label{PUVlemma} The space $P(U,V)$ is an algebra, and $N(U,V)$ is a two-sided ideal of $P(U,V)$.  Furthermore, the parabolic subgroup $P_U = P(U,V) \cap \GSpin(V)$, and its unipotent radical $N_U = (1 + N(U,V)) \cap \GSpin(V)$.\end{lemma}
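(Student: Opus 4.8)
The plan is to separate the formal ring-theoretic claims from the group-theoretic identifications, using throughout that the filtration $W^U_\bullet$ is multiplicative and $*$-stable. First I would note that, directly from (\ref{FilUdef}), $W^U_j\Clif(V)\cdot W^U_k\Clif(V)\subseteq W^U_{j+k}\Clif(V)$; since $1\in W^U_0\Clif(V)$, this makes $P(U,V)=W^U_0\Clif(V)$ a unital subalgebra, and taking $(j,k)\in\{(0,-1),(-1,0)\}$ shows $N(U,V)=W^U_{-1}\Clif(V)$ is a two-sided ideal. It is moreover nilpotent, since $N(U,V)^k\subseteq W^U_{-k}\Clif(V)=0$ once $k>\dim U$, so $1+N(U,V)$ is a subgroup of $\Clif(V)^\times$ all of whose elements are unipotent.

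Next I would prove the inclusions of the form ``$\subseteq$''. Take $g\in P(U,V)\cap\GSpin(V)$. Then $g^{-1}=\nu(g)^{-1}g^*$ again lies in $W^U_0\Clif(V)$ because the filtered pieces are $*$-stable, so using $U=W^U_{-1}V$ and the ideal property, $g^{-1}Ug\subseteq W^U_0\Clif(V)\cdot W^U_{-1}\Clif(V)\cdot W^U_0\Clif(V)\subseteq W^U_{-1}\Clif(V)$; since also $g^{-1}Ug\subseteq V$, the fact established in \S\ref{subsec:filtration} that $W^U$ on $\Clif(V)$ restricts to $W^U$ on $V$ gives $g^{-1}Ug\subseteq V\cap W^U_{-1}\Clif(V)=U$, i.e.\ $g$ stabilizes $U$ and $g\in P_U$. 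Thus $P(U,V)\cap\GSpin(V)\subseteq P_U$, and since $1+N(U,V)\subseteq W^U_0\Clif(V)$ also $(1+N(U,V))\cap\GSpin(V)\subseteq P_U$.

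For the reverse containments I would unwind the Levi decomposition $P_U=M_UN_U$ of \S\ref{subsec:parabolic} inside $\Clif(V)$. The radical $N_U$ is $\exp$ of the span $\mathfrak n_U\subseteq\Clif(V)$ of the products $u_1u_2$ (with $u_i\in U=W^U_{-1}V$) and $wu$ (with $w\in U^\perp=W^U_0V$, $u\in U$); by multiplicativity $\mathfrak n_U\subseteq W^U_{-1}\Clif(V)=N(U,V)$, and since this ideal is nilpotent $\exp$ is a polynomial map there, so $N_U\subseteq 1+N(U,V)$. The Levi $M_U$ is generated by $\GSpin(V_0)\subseteq\Clif^+(V_0)$ — whose elements are sums of products of elements of $V_0\subseteq U^\perp=W^U_0V$ — together with the torus and the unipotent root subgroups of its $\GL(U)$-factor, realized by the elements $t\,e_if_i+f_ie_i$ and $1+x\,e_if_j$; since $e_i\in U^\vee\subseteq W^U_1V$ and $f_i\in U=W^U_{-1}V$, all of these lie in $W^U_0\Clif(V)$, hence $M_U\subseteq W^U_0\Clif(V)$. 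Therefore $P_U=M_UN_U\subseteq W^U_0\Clif(V)\cap\GSpin(V)$, which with the previous paragraph gives $P_U=P(U,V)\cap\GSpin(V)$ and also $N_U\subseteq(1+N(U,V))\cap\GSpin(V)$. It remains to see the opposite inclusion for the radical: $(1+N(U,V))\cap\GSpin(V)$ is a subgroup of $\GSpin(V)$ consisting of unipotent elements, it is contained in $P_U$, and it is normalized by $P_U$ because $P_U\subseteq W^U_0\Clif(V)$ carries the ideal $N(U,V)=W^U_{-1}\Clif(V)$ into itself under conjugation; hence its image in the reductive quotient $M_U=P_U/N_U$ is a normal unipotent subgroup, so trivial, and the subgroup is contained in $N_U$.

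The only genuine work I anticipate is the bookkeeping in the third paragraph: it requires returning to the concrete realization of $\GSpin(V)$ and of $P_U$ inside $\Clif(V)$ (the cocharacters $m_i$ of \S\ref{subsec:maxtorus}, the root subgroups, the embedding $\GSpin(V_0)\hookrightarrow\GSpin(V)$, and the $\exp$-parametrization of $N_U$) and checking that each generator sits in the asserted filtration level. A tidier alternative I would also mention: since $W^U$ is the filtration induced by conjugation by the cocharacter $h_U$ of \S\ref{subsec:filtration}, the set $W^U_0\Clif(V)\cap\GSpin(V)$ is exactly the parabolic subgroup of $\GSpin(V)$ attached to $h_U$ and $(1+N(U,V))\cap\GSpin(V)$ is its unipotent radical; because the $h_U$-weight flag on $V$ is precisely $U\subseteq U^\perp$, that parabolic is the stabilizer of $U$, namely $P_U$, and both identifications follow at once.
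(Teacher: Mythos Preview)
Your argument is correct and shares the paper's overall architecture: the ring-theoretic claims and the inclusion $P(U,V)\cap\GSpin(V)\subseteq P_U$ are handled just as in the paper. The differences lie in the two reverse containments.

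For $M_U\subseteq W^U_0\Clif(V)$ you check this on an explicit generating set (the $\GSpin(V_0)$-factor, the cocharacters $te_if_i+f_ie_i$, and the $\GL(U)$-root elements $1+xe_if_j$), which is the ``bookkeeping'' you flag. The paper instead observes that every $m\in M_U$ commutes with $h_U(t)$ and reads off $m\in W^U_0$ from the weight decomposition furnished by the linear isomorphism $\Clif(V)\simeq\bigwedge^\bullet U^\vee\otimes\Clif(V_0)\otimes\bigwedge^\bullet U$; this is essentially the ``tidier alternative'' you sketch at the end, applied only to the Levi, and it avoids having to enumerate generators. For $(1+N(U,V))\cap\GSpin(V)\subseteq N_U$, you appeal to structure theory (the image in the reductive quotient $M_U$ is a normal subgroup consisting of unipotent elements, hence trivial), whereas the paper computes directly: for $n=1+x$ with $x\in N(U,V)$ one has $\nu(n)=1$ and $v\cdot n=(1+x^*)v(1+x)$ agrees with $v$ in the associated graded of $W^U_\bullet$ on $V$, so $n\in N_U$. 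Your route is more conceptual and avoids any coordinate computation on $V$; the paper's is shorter and entirely self-contained, needing no input from the general theory of reductive groups.
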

\begin{proof} Since $W_j\Clif(V) \cdot  W_k\Clif(V) \subseteq W_{j+k}\Clif(V)$, it is clear that $P(U,V)$ is an algebra and $N(U,V)$ a two-sided ideal of $P(U,V)$.

For the second part, suppose $g \in P(U,V) \cap \GSpin(V)$, and $u \in U$.  Then $g^*ug =v$ for some $v \in V$.  But the left-hand-side of this equality is in $W_{-1}$, and thus $v \in V \cap W_{-1}\Clif(V) = U$.  Hence $g \in P_U$.

Conversely, suppose $g \in P_U = M_UN_U$, where $M_U$ is the Levi subgroup stabilizing the decomposition $V = U^\vee \oplus V_0 \oplus U$. Then $g = mn$ with $n \in \exp(U^\perp \cdot U) \subseteq W_{0}\Clif(V)$ and $m \in M_U$.  But, by the definition of $h_U$, $m$ commutes with $h_U(t)$.  By using the linear isomorphism $\Clif(V) = \bigwedge^{\bullet}(U^\vee) \otimes \Clif(V_0) \otimes \bigwedge^{\bullet}(U)$, one sees that this implies $m \in W_{0}\Clif(V)$ as well.

The inclusion $N_U \subseteq (1+N(U,V)) \cap \GSpin(V)$ follows from the fact that $N_U = \exp(U^\perp \cdot U)$.  Conversely, suppose $n = 1 + x$ is in $\GSpin(V)$, with $x \in N(U,V)$.  Then $\nu(n) = 1$, and if $v \in V$, $v \cdot n = (1+x^*)v(1+x) = v + x^*v + vx + x^*x$.  Hence $v \cdot n$ has the same image in the associated graded of $W^U_{\bullet}$ on $V$ that $v$ does, so $n \in N_U$.\end{proof}

\section{Two lemmas}\label{section:twoLemmas} In this section, we give two lemmas which will be utilized in other sections. We state and prove these lemmas now so that section \ref{section:globInt} and section \ref{section:Lfcn} may be read briskly.  However, the reader may wish to proceed directly to other sections and then come back to this section when needed.

We require the following definition.
\begin{definition} Suppose $F$ is a $p$-adic local field, the lattice $\Lambda \subseteq V$ is fixed, and $y \in \GSpin(V)$.  Then, there is a unique integer $k$ so that $y = \varpi^k y_0$ with $y_0 \in \Clif(\Lambda)$ and $y_0$ has nonzero reduction in $\Clif(\Lambda/\varpi\Lambda)$.  We set $\val(y) = k$ and $||y|| = |\varpi|^{\val(y)} = |\varpi|^{k}$.\end{definition}
We set $q =\# \mathcal{O}_F/\varpi = |\varpi|^{-1}$.

\subsection{A volume calculation} Let $V$ be as above, and $V_1$ the span of $e_i, f_j$ with $j \geq 2$ and $V_{E}$, so that $V = Fe_1 \oplus V_1 \oplus Ff_1$.  Set $\Lambda_1 = \Lambda \cap V_1$, and for $y \in \GSpin(V_1)$ define
\[U_{y} = \{x \in V_1: xy \in \Clif(\Lambda_1)\}.\]

\begin{lemma}\label{meas} Suppose $F$ is a $p$-adic local field and $V$ is quasisplit.  Let $y$ in $\GSpin(V_1)$ be equal to $\varpi^k y_0$, with $\val(y_0) = 0$.  Normalize an additive Haar measure on $V_1$ by setting $\Lambda_1$ to have measure $1$.  Then the measure of $U_y$ is 
\[q^{k \dim V_1} |\nu(y_0)|^{-1} = (q^k)^{\dim V_1 -2} |\nu(y)|^{-1} = ||y||^{2-\dim V_1}|\nu(y)|^{-1}.\]
\end{lemma}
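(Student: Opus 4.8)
The plan is to reduce, by a chain of elementary simplifications, to the case where $y$ lies in a maximal split torus, where $U_y$ can be written down by hand inside the Clifford algebra. Since $\varpi^k$ is a central scalar of $\Clif(V_1)$, one has $U_y=\varpi^{-k}U_{y_0}$, hence $\mu(U_y)=q^{k\dim V_1}\mu(U_{y_0})$; as $\nu(\varpi^k)=\varpi^{2k}$ and $\|y\|=|\varpi|^k$, the three displayed quantities agree with one another, so it suffices to prove $\mu(U_{y_0})=|\nu(y_0)|^{-1}$ when $\val(y_0)=0$. In that case $\Lambda_1 y_0\subseteq\Clif(\Lambda_1)$ (because $y_0\in\Clif(\Lambda_1)$), so $\Lambda_1\subseteq U_{y_0}$, while $xy_0\in\Clif(\Lambda_1)$ forces $\nu(y_0)x=xy_0y_0^{*}\in\Clif(\Lambda_1)\cap V_1=\Lambda_1$ (using $y_0^{*}\in\Clif(\Lambda_1)$ and $y_0^{*}y_0=\nu(y_0)$). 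Hence $U_{y_0}$ is a lattice with $\Lambda_1\subseteq U_{y_0}\subseteq\nu(y_0)^{-1}\Lambda_1$ and $\mu(U_{y_0})=[U_{y_0}:\Lambda_1]$ is a power of $q$.

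Next I would check that $\mu(U_{y_0})$ and $|\nu(y_0)|$ are left- and right-invariant under the maximal compact $K_1=\GSpin(V_1)\cap\Clif(\Lambda_1)^{\times}$ of $\GSpin(V_1)(F)$ (this is the maximal compact of \cite{hinaSugano}, for which $\GSpin(V_1)(F)=K_1\,A(F)\,K_1$ with $A$ a maximal split torus): right invariance is immediate from $\Clif(\Lambda_1)k=\Clif(\Lambda_1)$, and for left invariance one substitutes $x\mapsto k^{-1}xk$ — the orthogonal action of $k$, which preserves $\Lambda_1$ and hence Haar measure — and verifies $U_{ky_0}=kU_{y_0}k^{-1}$. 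By the Cartan decomposition, and absorbing $A(\mathcal{O}_F)\subseteq K_1$, I may thus take $y_0=a=z(\varpi^{c})\prod_i m_i(\varpi^{b_i})$, the product running over the hyperbolic planes $H_i=Fe_i\oplus Ff_i$ of $V_1$; when $V$ is quasisplit but not split, the factor $\GSpin(V_E)$ meets $A$ only in the centre by Lemma \ref{iota0}, so it contributes nothing further. Writing $\epsilon_i=e_if_i$, one has $m_i(t)=1+(t-1)\epsilon_i$ with $\epsilon_i^{2}=\epsilon_i$, $e_i\epsilon_i=0$, $f_i\epsilon_i=f_i$; from this $\nu(a)=\varpi^{\,2c+\sum_i b_i}$, and the condition $\val(a)=0$, read off from the leading term of $a$, forces $c=\sum_{b_i<0}|b_i|$.

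Finally I would compute $U_a$ directly. Putting $x=\sum_i(\alpha_ie_i+\beta_if_i)+v_E$ with $v_E\in V_E$, and using that each even element $m_j(\varpi^{b_j})\in\Clif(H_j)$ commutes with $e_i,f_i$ for $i\ne j$ and with $V_E$, a short calculation with the relations above gives, with $c_j=\varpi^{b_j}-1$,
\[
xa=\sum_i\alpha_i\varpi^{c}\,e_i\!\!\prod_{j\ne i}(1+c_j\epsilon_j)\;+\;\sum_i\beta_i\varpi^{c+b_i}\,f_i\!\!\prod_{j\ne i}(1+c_j\epsilon_j)\;+\;\varpi^{c}\,v_E\!\!\prod_j(1+c_j\epsilon_j).
\]
The monomials $e_i\prod_{j\in S}\epsilon_j$, $f_i\prod_{j\in S}\epsilon_j$ and $v\prod_{j\in S}\epsilon_j$ (with $v$ in an $\mathcal{O}_F$-basis of $V_{\mathcal{O}_E}$) occurring here are pairwise distinct, up to sign, among the standard $\mathcal{O}_F$-basis of $\Clif(\Lambda_1)$, so $xa\in\Clif(\Lambda_1)$ if and only if each of its coefficients lies in $\mathcal{O}_F$; taking $S$ to be the set of indices with $b_j<0$ (the worst case) yields exactly $\alpha_i\in\varpi^{\min(b_i,0)}\mathcal{O}_F$, $\beta_i\in\varpi^{-\max(b_i,0)}\mathcal{O}_F$ and $v_E\in V_{\mathcal{O}_E}$. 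Therefore
\[
U_a=\bigoplus_i\varpi^{\min(b_i,0)}\,\mathcal{O}_F\,e_i\ \oplus\ \bigoplus_i\varpi^{-\max(b_i,0)}\,\mathcal{O}_F\,f_i\ \oplus\ V_{\mathcal{O}_E},
\]
so $\mu(U_a)=q^{\sum_i|b_i|}=q^{\,2c+\sum_ib_i}=|\nu(a)|^{-1}$, which is what was needed.

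The step I expect to be the main obstacle is the last one: one has to verify carefully that no two of the Clifford monomials occurring in $xa$ coincide, and that the evident integrality conditions on $\alpha_i,\beta_i,v_E$ are sufficient as well as necessary — both are routine but require some attention to signs and index sets. A secondary technical point is the structural input, presumably from \cite{hinaSugano}, that the relevant maximal compact $K_1$ is contained in $\Clif(\Lambda_1)^{\times}$ and admits the Cartan decomposition, which is exactly what licenses the reduction to $a\in A(F)$.
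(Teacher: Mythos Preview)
Your argument is correct. The first half---reducing to $\val(y_0)=0$, establishing $K_1$--bi-invariance of $y\mapsto\mu(U_y)$, and invoking the Cartan decomposition to put $y_0$ in a maximal split torus---is exactly what the paper does. The difference is in how the torus case is finished.

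The paper does not compute $xa$ inside the full Clifford algebra. Instead it writes the torus representative as a product $y=\prod_i y_i$ with each $y_i\in\GSpin(H_i)$ (or $\GSpin(V_E)$) normalized so that $\val(y_i)=0$, and then observes that, because the $\val(y_j)$ all vanish, the pure-tensor decomposition of $\Clif(\Lambda_1)$ forces $U_y=\prod_i U_{y_i}$. This reduces everything to $\dim V_1\le 2$, where $\Clif(V_E)^-=V_E$ and the measure of $U_y$ is literally $|\det(y\text{ on }V_E)|^{-1}=|\nu(y)|^{-1}$. So the paper's endgame is a short determinant argument in rank one.

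Your endgame keeps $a=z(\varpi^c)\prod_i m_i(\varpi^{b_i})$ whole, expands $xa$ explicitly in a monomial basis of $\Clif(\Lambda_1)$, and reads off the integrality conditions coordinate by coordinate. This is more computational but entirely self-contained; the linear-independence check you flag as the main obstacle is indeed routine (distinct subsets $S$ pick out distinct monomials in the PBW-type basis), and your ``worst case'' $S=\{j:b_j<0\}$ really does dominate all others because $|c_j|\le 1$ when $b_j\ge 0$ and $c_j=0$ when $b_j=0$. One small point worth making explicit: in the split even case $V_E$ is itself a hyperbolic plane, so it should be absorbed into the $H_i$'s and the separate $v_E\in V_{\mathcal{O}_E}$ clause is vacuous there; your phrasing ``the product running over the hyperbolic planes of $V_1$'' already suggests this, but it would be cleaner to say so.

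What each approach buys: the paper's factorization $U_y=\prod_i U_{y_i}$ is conceptually cleaner and avoids any bookkeeping in $\Clif(V_1)$, but it relies on the (easy but not entirely trivial) fact that $\val$ is additive on pure tensors in $\bigotimes_j\Clif(\Lambda^j)$. Your direct computation avoids that step and gives the lattice $U_a$ explicitly, at the cost of the monomial analysis. Both routes lean on the same structural input you identified---that $K_1\subseteq\Clif(\Lambda_1)^\times$ and that the Cartan decomposition holds for it---which the paper also takes for granted, citing \cite{hinaSugano}.
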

\begin{proof} First, one reduces immediately to the case of $k = 0$.  Now, assume $k = 0$ for the rest of the proof, so that $y = y_0$ has $\val(y) = 0$.  Let $K_1 \subseteq \GSpin(V_1)$ be the maximal compact defined by $\Lambda_1$.  If $k \in K_1$, then $U_{yk} = U_{y}$.  Moreover, the map $x \mapsto x \cdot k = k^{-1}xk$ defines a bijection $U_{ky} \rightarrow U_{y}$.  Since this linear map on $V_1$ is measure-preserving, we deduce that the function $y \mapsto \mathrm{meas}(U_y)$ is a $K_1$ bi-invariant function on $\GSpin(V_1)$.  Thus, we may apply the Cartan decomposition to simplify the computation.

Now, we may write $\Lambda_1$ as an orthogonal direct sum of lattices $\Lambda^i = \mathcal{O}_F e_i \oplus \mathcal{O}_F f_i$ and $V_{\mathcal{O}_E}$, which is free of rank $1$ or $2$ over $\mathcal{O}_F$.  Applying the Cartan decomposition, we may assume $y = \prod_{i}{y_i}$, with $\val(y_i) = 0$ and $y_i \in \GSpin(\Lambda^i \otimes F) \subseteq \GSpin(V_1)$.  Since the $\val(y_i)$ are all $0$, one checks easily that $U_y = U_{y_1} \times \cdots \times U_{y_j}$.  Hence we have reduced to the case where $\dim V_1 = 1$ or $2$.

To handle the case $\dim(V_1) =1$ or $2$, we may assume $V_1 = V_{E}$ for $E=F$ or a quadratic \'{e}tale extension of $F$.  In this case, $\Clif(V_E)^{-} = V_E$.  Thus here we have $U_y = \{x \in Clif(V_E)^{-}: xy \in \Clif(\Lambda)^{-}\}$.  Hence the measure of $U_{y}$ is $|\det(y: \Clif(V_E)^{-})|^{-1}$, the inverse of the absolute value of the determinant of $y$ acting by multiplication on $\Clif(V_E)^{-}$.  But via the map $\iota_0$ of Lemma \ref{iota0}, this map is the right multiplication of $E^\times$ on $V_E\simeq E$, so its determinant is $n_{E/F}(\iota_0^{-1}(y)) = \nu(y)$, as desired.\end{proof}

\subsection{A Fourier coefficient calculation} In this subsection, $F$ remains a $p$-adic local field, and $V$ quasisplit. Recall that $P = MN$ is the parabolic subgroup that stabilizes the line $Ff_1$.  Recall the function $\Phi$, which is the characteristic function of $\Clif(\Lambda) \subseteq \Clif(V)$, and suppose $T \in V_1$ has $q(T) \neq 0$.  Define
\[S_{T}(\Phi)(m) = \int_{V_1}{\psi((T,x))\Phi(n(x)m)\,dx},\]
the $T^{th}$ Fourier coefficient of $\Phi$.  Here $\psi: F \rightarrow \C^\times$ is an additive character with conductor $\mathcal{O}_F$.  In this subsection we compute $S_{T}(\Phi)(m(\lambda,y))$.
\begin{claim}\label{lemClaim} One has $\Phi(n(x)m(\lambda,y)) = \charf(y,\lambda y)\charf(xy)$.   That is, $n(x)m(\lambda, y)$ is in $\Clif(\Lambda)$ if and only if $y, \lambda y$ and $xy$ are in $\Clif(\Lambda_1)$. \end{claim}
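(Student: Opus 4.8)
The plan is to compute the product $n(x)m(\lambda,y)$ explicitly inside the Clifford algebra and then extract the condition for membership in $\Clif(\Lambda)$ by decomposing $\Clif(V)$ along the line $Ff_1$. Recall $n(x) = 1 + f_1 x$ and $m(\lambda,y) = m_1^*(\lambda) y = (e_1 f_1 + \lambda f_1 e_1)y$, so that $n(x)m(\lambda,y) = (e_1 f_1 + \lambda f_1 e_1)y + f_1 x (e_1 f_1 + \lambda f_1 e_1) y$. The key structural input is the linear decomposition $\Clif(V) = \Clif(V_1) \oplus e_1 \Clif(V_1) \oplus f_1 \Clif(V_1) \oplus e_1 f_1 \Clif(V_1)$, which respects integral structures: $\Clif(\Lambda) = \Clif(\Lambda_1) \oplus e_1\Clif(\Lambda_1) \oplus f_1 \Clif(\Lambda_1) \oplus e_1 f_1 \Clif(\Lambda_1)$, using that $e_1, f_1$ are part of a basis of $\Lambda$ with $(e_1,f_1)=1$, $q(e_1)=q(f_1)=0$.

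First I would move all occurrences of $e_1$ and $f_1$ past $x$ and past the $V_1$-part of $y$ using the Clifford relations $e_1 f_1 + f_1 e_1 = 1$, $e_1^2 = f_1^2 = 0$, and the fact that $e_1, f_1$ anticommute with every vector in $V_1$ (hence commute with $\Clif^+(V_1)$ and anticommute with $\Clif^-(V_1)$). Since $y \in \GSpin(V_1) \subseteq \Clif^+(V_1)$, it commutes with $e_1 f_1$ and $f_1 e_1$; this lets me rewrite $n(x)m(\lambda,y) = e_1 f_1 y + \lambda f_1 e_1 y + f_1 x e_1 f_1 y + \lambda f_1 x f_1 e_1 y$. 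Using $x e_1 = -e_1 x + (x,e_1) = -e_1 x$ (as $x \in V_1 \perp e_1$) and $x f_1 = -f_1 x$, together with $f_1^2 = 0$, the last term vanishes and the third simplifies, and one collects the expression into the four-component form $A + e_1 B + f_1 C + e_1 f_1 D$ with $A, B, C, D \in \Clif(V_1)$. I expect to find $A = $ (something proportional to $\lambda y$ or $y$), and the cross term $f_1 x e_1 f_1 y$ contributing an $f_1$-component proportional to $xy$; carefully tracking signs and the $e_1 f_1$ versus $f_1 e_1$ bookkeeping will show that the four components are, up to units in $\mathcal{O}_F$, precisely $y$, $\lambda y$ (or $y$), $xy$ (or a combination), and $y$ again.

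Then, since $\Clif(\Lambda)$ decomposes as the direct sum of the four pieces $\Clif(\Lambda_1)$, $e_1\Clif(\Lambda_1)$, $f_1\Clif(\Lambda_1)$, $e_1 f_1 \Clif(\Lambda_1)$, membership $n(x)m(\lambda,y) \in \Clif(\Lambda)$ is equivalent to each of $A, B, C, D \in \Clif(\Lambda_1)$. Matching this against the components computed in the previous step yields exactly the conditions $y \in \Clif(\Lambda_1)$, $\lambda y \in \Clif(\Lambda_1)$, and $xy \in \Clif(\Lambda_1)$, which is the claim $\Phi(n(x)m(\lambda,y)) = \charf(y,\lambda y)\charf(xy)$. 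The main obstacle I anticipate is purely computational: keeping the anticommutation signs straight when sliding $e_1, f_1$ through $x$ and through the homogeneous components of $y$ (which need not be purely even in a naive basis expansion — though it is in $\Clif^+(V_1)$, so commutation with $e_1 f_1$ is clean, but the term $f_1 x e_1 f_1 y$ requires care). A secondary point to check is that the identification of the third component as $xy$ rather than some twisted version really holds; once the term $f_1 x e_1 f_1 y = -f_1 e_1 x f_1 y = f_1 e_1 f_1 x y = (f_1 - f_1 e_1 f_1)\cdots$ — here I would use $e_1 f_1 = 1 - f_1 e_1$ and $f_1 e_1 f_1 = f_1(1 - e_1 f_1) = f_1 - f_1 e_1 f_1$, giving $f_1 e_1 f_1 = \tfrac{1}{2}f_1$? no — rather $f_1 e_1 f_1 = f_1$ directly since $e_1 f_1 + f_1 e_1 = 1$ forces $f_1 e_1 f_1 = f_1(1-e_1f_1)$, hence this is the genuinely delicate identity to pin down, after which the component structure falls out immediately.
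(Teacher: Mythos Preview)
Your approach is correct and is essentially the same as the paper's: both compute $n(x)m(\lambda,y)$ explicitly in $\Clif(V)$ and then read off the integrality conditions from the direct-sum decomposition along $Fe_1\oplus Ff_1$, the paper writing the element as $(e_1f_1)y + (f_1e_1)(\lambda y) - (xy)f_1$ and isolating the three pieces by multiplying on the left or right by $e_1$ or $f_1$, while you would instead convert to the basis $\{1,e_1,f_1,e_1f_1\}$ over $\Clif(V_1)$ and obtain the equivalent conditions $\lambda y,\ (1-\lambda)y,\ xy \in \Clif(\Lambda_1)$. The identity you flag as ``delicate'' is in fact immediate: $f_1 e_1 f_1 = (1 - e_1 f_1)f_1 = f_1 - e_1 f_1^2 = f_1$, so $f_1 x e_1 f_1 y = f_1 e_1 f_1\, xy = f_1\,xy$ and the computation goes through with no further subtlety.
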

\begin{proof} First note that since $f_1 e_1 f_1 = f_1$ and $f_1^2 = 0$, $f_1x m(\lambda,y) = f_1x y = -xyf_1$. Hence
\begin{align*} n(x)m(\lambda,y) &= (1-xf_1)m(\lambda,y) \\ &= m(\lambda,y) - xf_1m(\lambda,y) \\ &= m(\lambda,y) - xyf_1 \\ &= (e_1f_1 + \lambda f_1e_1)y - xyf_1.\end{align*}
Thus if $y, \lambda y,$ and $xy$ are in $\Clif(\Lambda_1)$, $n(x)m(\lambda,y)$ is in $\Clif(\Lambda)$.

Conversely, suppose $n(x)m(\lambda,y) = (e_1f_1)y + (f_1e_1)(\lambda y) - (xy)f_1$ is in $\Clif(\Lambda)$.  Then multiplying on the right by $f_1$ proves that $f_1 (\lambda y) \in \Clif(\Lambda)$, and multiplying on the left by $f_1$ proves that $f_1 y$ is in $\Clif(\Lambda)$.  Multiplying by $e_1$ and subtracting proves that $(xy)f_1$ is in $\Clif(\Lambda)$.  Finally, the identity $r = (rf_1)e_1 \pm e_1(rf)$ for $r \in \Clif^{\pm}(V_1)$ gives that $y, \lambda y$ and $xy$ are in $\Clif(\Lambda_1)$, finishing the proof of the claim.\end{proof}

From Claim \ref{lemClaim} we obtain
\[S_T(\Phi)(m(\lambda,y)) = \int_{V_1(F)}{\psi((T,x))\Phi(n(x)m(\lambda,y))\,dx} =\charf(y,\lambda y) \int_{V_1(F)}{\psi((T,x))\charf(x y)\,dx}.\]
Thus the set of $x$ for which $\Phi(n(x)m) \neq 0$ is an abelian group.

Define
\[\Lambda_1^\vee = \{ v \in V_1(F): (v,x) \in \mathcal{O}_{F} \text{ for all } x \in \Lambda_1\}.\]
Note that for $S_{T}(\Phi)$ to be nonzero, one needs $T \in \Lambda_1^\vee$.  Assume $T \notin p \Lambda_1^\vee$.  Then one sees immediately that for $S_{T}(\Phi)(m(\lambda,y))$ to be nonzero, we need $\val(y) = 0$.  Hence, since $\lambda y$ must also be integral for $S_T(\Phi)(m(\lambda,y))$ to be nonzero, we deduce $\lambda \in \mathcal{O}_F$ as well.

We claim that $T \cdot y = y^{-1}Ty$ must be in $\Lambda_1^\vee$ for $S_T(\Phi)(m(\lambda,y)) \neq 0$.  To see this, make a variable change $x \mapsto yxy^{-1}$.  Then, $S_T(\Phi)$ becomes
\begin{align*} S_T(\Phi)(m(\lambda,y)) &= \int_{V_1(F)}{\psi((T,yxy^{-1}))\charf(yx)\,dx} \\ &= \int_{V_1(F)}{\psi((y^{-1}Ty,x))\charf(yx)\,dx}. \end{align*}
Hence we deduce $y^{-1}Ty \in \Lambda_1^\vee$ as well, for the Fourier coefficient $S_T(\Phi)$ to be nonzero.  

Summarizing the above argument, we have proved the first part of the following lemma.
\begin{lemma}\label{FClemma} Assume $T \in \Lambda_1^\vee$ but not in $p\Lambda_1^\vee$.  For $S_{T}(\Phi)(m(\lambda,y))$ to be nonzero, one needs $\val(y) = 0$, $\lambda \in \mathcal{O}_F$, and $y^{-1}Ty = T \cdot y$ in $\Lambda_1^\vee$. Furthermore assume that $\Lambda_1$ is self-dual, i.e., that $\Lambda_1^\vee = \Lambda_1$.  Then if these conditions are satisfied, $S_T(\Phi)(m(\lambda,y)) = |\nu(y)|^{-1}$.\end{lemma}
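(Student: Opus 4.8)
The plan is to reduce the claim to a single integrality assertion about the bilinear form on $V_1$, and then settle that assertion by a one-line manipulation inside the Clifford algebra. By the discussion preceding the statement of the lemma, once the listed nonvanishing conditions hold we have $\charf(y,\lambda y)=1$ (indeed $\val(y)=0$ gives $y\in\Clif(\Lambda_1)$, and then $\lambda\in\mathcal{O}_F$ gives $\lambda y\in\Clif(\Lambda_1)$), so that
\[S_T(\Phi)(m(\lambda,y))=\int_{U_y}{\psi((T,x))\,dx},\qquad U_y=\{x\in V_1:xy\in\Clif(\Lambda_1)\},\]
with $dx$ normalized as in Lemma~\ref{meas}. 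As noted above, $U_y$ is a compact open subgroup of $V_1(F)$, so $x\mapsto\psi((T,x))$ is a character of it, and the integral equals $\mathrm{meas}(U_y)$ if this character is trivial on $U_y$ and is $0$ otherwise. By Lemma~\ref{meas} applied with $\val(y)=0$ one has $\mathrm{meas}(U_y)=\|y\|^{2-\dim V_1}|\nu(y)|^{-1}=|\nu(y)|^{-1}$. Hence everything reduces to showing that $(T,x)\in\mathcal{O}_F$ for every $x\in U_y$.

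To prove this I would first record what the self-duality hypothesis $\Lambda_1^\vee=\Lambda_1$ buys: combined with the already-established conditions it yields $T\in\Lambda_1$ and $T':=y^{-1}Ty\in\Lambda_1$, with $T'\in V_1$ since it is the $\SO(V_1)$-translate $T\cdot y$ of $T$. Now fix $x\in U_y$, so $xy\in\Clif(\Lambda_1)$. Using the Clifford identity $(T,x)=Tx+xT$ in $\Clif(V_1)$ together with the tautology $Ty=yT'$, I compute
\[(T,x)\,y=(Tx+xT)y=T(xy)+x(Ty)=T(xy)+(xy)T'.\]
Both summands on the right lie in $\Clif(\Lambda_1)$, because $T$ and $T'$ lie in $\Lambda_1\subseteq\Clif(\Lambda_1)$ and $xy\in\Clif(\Lambda_1)$; hence $(T,x)\,y\in\Clif(\Lambda_1)$. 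Since $(T,x)$ is a scalar, $y\in\Clif(\Lambda_1)\setminus\varpi\Clif(\Lambda_1)$ (as $\val(y)=0$), and $\Clif(\Lambda_1)$ is a full $\mathcal{O}_F$-lattice in $\Clif(V_1)$, this forces $(T,x)\in\mathcal{O}_F$. Therefore $\psi((T,x))=1$ for all $x\in U_y$, and $S_T(\Phi)(m(\lambda,y))=\mathrm{meas}(U_y)=|\nu(y)|^{-1}$, as claimed.

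The only step I expect to require any thought is finding the right way to use \emph{both} conditions $T\in\Lambda_1$ and $y^{-1}Ty\in\Lambda_1$ at once. Naive estimates — for instance deducing only $x\in\nu(y)^{-1}\Lambda_1$, or invoking integrality of $y^{*}(Tx+xT)y$ — lose a factor of $\nu(y)$ and give merely $(T,x)\in\nu(y)^{-1}\mathcal{O}_F$, which is insufficient when $\nu(y)$ is not a unit; the trick is to multiply $(T,x)$ on the right by $y$ and to rewrite $xTy=(xy)T'$, which is precisely where the divisibility encoded in $T'\in\Lambda_1$ gets used. Everything else is the bookkeeping already carried out before the statement of the lemma, together with the elementary fact that a scalar $c$ with $cy\in\Clif(\Lambda_1)$ and $\val(y)=0$ must lie in $\mathcal{O}_F$.
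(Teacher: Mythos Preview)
Your proof is correct and is essentially identical to the paper's own argument: the paper also reduces to showing $(T,x)\in\mathcal{O}_F$ for $x\in U_y$, proves it via the same Clifford identity $(T,x)\,y = T(xy) + (xy)(y^{-1}Ty)$, and then invokes Lemma~\ref{meas} to evaluate the measure of $U_y$. The only cosmetic difference is that the paper records the right-hand side as lying in $\Lambda_1^\vee\Clif(\Lambda_1)+\Clif(\Lambda_1)\Lambda_1^\vee$ before invoking self-duality, whereas you apply self-duality first.
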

\begin{proof} We still must prove the second part of the lemma.  Suppose then that $\val(y) = 0$, $T \in \Lambda_1^\vee \setminus p\Lambda_1^\vee$, $\lambda \in \mathcal{O}_F$, and $y^{-1}Ty \in \Lambda_1^\vee$.  Then we claim that $xy \in \Clif(\Lambda_1)$ integral implies $(T,x)$ in $\mathcal{O}_F$.  Indeed, this follows from the identity
\[(T,x)y = (Tx + xT)y = T(xy) + (xy)(y^{-1}Ty) \in \Lambda_1^\vee \Clif(\Lambda_1) + \Clif(\Lambda_1)\Lambda_1^\vee\]
since then $(T,x)y \in \Clif(\Lambda_1)$, which implies $(T,x) \in \mathcal{O}_F$ since $\val(y) = 0$.

Thus $S_T(\Phi)(m(\lambda,y))$ is the measure of the set $\{x \in V_1(F): xy \in \Clif(\Lambda_1)\}.$  But this measure was computed in Lemma \ref{meas}, completing the proof. \end{proof}

\section{Proof of Theorem \ref{GJGSpin}}\label{section:Lfcn}
The purpose of this section is to give a direct proof of Theorem \ref{GJGSpin}.  Recall that we assume that $V$ is quasisplit, $\dim(V) = 2n+1$ or $\dim(V) = 2n+2$, and we keep notation as in section \ref{section:prelim}.  Recall that $B$ is a Borel subgroup, $T \subseteq B$ is a maximal torus, $\alpha: T(F) \rightarrow \C^\times$ is an unramified character, and $\pi = Ind_B^{\GSpin(V)}(\delta_{B}^{1/2}\alpha)$ is $K$-spherical.  We denote by $\phi_{\alpha}$ the spherical vector in this induction, so that $\phi_\alpha: \GSpin(V) \rightarrow \C$ satisfies $\phi_\alpha(bk) = (\delta_B^{1/2}\alpha)(b)$.  To prove Theorem \ref{GJGSpin} we must compute the integral
\[I(\alpha,s) = \int_{\GSpin(V)(F)}{\phi_\alpha(g) \Phi(g)|\nu(g)|^{s}\,dg}.\]
Finally, recall that $P = MN$ denotes the parabolic subgroup of $\GSpin(V)$ that stabilizes the line $Ff_1$.  We write $\tau = Ind_{B}^{M}(\delta_{P}^{-1/2}\delta_{B}^{1/2}\alpha)$ for the $K_1$-spherical representation of $M$ so that $Ind_P^{\GSpin(V)}(\delta_{P}^{1/2}\tau) = \pi$.

The proof of Theorem \ref{GJGSpin} is by induction on the rank of $\GSpin(V)$, by comparing $\GSpin(V)$ with $\GSpin(V_1)$.  We first dispose of the cases when $n=0$, so that $V = V_{E}$ with $E=F$ or $E$ a quadratic \'{e}tale extension of $F$.  Recall that if $\dim(V) \geq 2$, we define
\[d^{V}(s) = \prod_{2 \leq j \leq \dim(V)-2: j \text{ even}}{\zeta(\omega_{\pi},2s-j)}.\]
Note that $d^{V}(s) =1$ if $\dim(V) =2$ or $3$.  When $\dim(V) =1$, we define $d^{V}(s) = \zeta(\omega_{\pi},2s)^{-1}$.  With this definition, one has $d^{V}(s) = \zeta(\omega_{\pi},2s-2)d^{V_1}(s-1)$ if $\dim(V) \geq 3$, and Theorem \ref{GJGSpin} is also true in the case $\dim(V) = 1$.  Thus we may use $V=V_{E}$, $E=F$ or $E$ a quadratic extension of $F$, as our base cases for the induction.
\begin{claim} Theorem \ref{GJGSpin} is true when $V = V_{E}$.\end{claim}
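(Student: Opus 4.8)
The plan is to reduce everything to an explicit Tate integral, exploiting that for $n = 0$ the group $\GSpin(V) = \GSpin(V_{E})$ is abelian. First I would use Lemma \ref{iota0} to identify $\GSpin(V_{E})(F)$ with $E^\times$ via $\iota_0$; under this identification the similitude $\nu$ becomes the norm $N_{E/F}$, and, since $\GSpin(V_{E}) \subseteq \Clif^{+}(V_{E})$ and $\iota_0$ carries $\mathcal{O}_E$ isomorphically onto $\Clif^{+}(\mathcal{O}_E)$, the characteristic function $\Phi$ of $\Clif(\Lambda)$ restricts to the characteristic function of $\mathcal{O}_E \cap E^\times$. Because $n = 0$ there is no isotropic flag to stabilize, so $B = T = \GSpin(V_{E})$; hence $\delta_B \equiv 1$, $\pi = \alpha$, and the spherical vector $\phi_\alpha$ is $\alpha$, which via $\iota_0$ is an unramified character $\alpha_E$ of $E^\times$. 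Likewise $\iota_0$ identifies the center $Z_{V}(F)$ with $F^\times \subseteq E^\times$, so $\omega_\pi = \alpha_E|_{F^\times}$ (all of $\alpha_E = \alpha$ when $\dim V = 1$). With the Haar measure on $\GSpin(V_{E})(F)$ normalized so the maximal compact $K = \iota_0(\mathcal{O}_E^\times)$ has volume $1$, the integral $I(\alpha,s)$ becomes
\[ I(\alpha,s) = \int_{\mathcal{O}_E \cap E^\times}{\alpha_E(\lambda)\,|N_{E/F}(\lambda)|^{s}\,d^\times\lambda}. \]

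Next I would simply evaluate this integral by breaking $\mathcal{O}_E \cap E^\times$ into cosets of $\mathcal{O}_E^\times$, i.e. by the valuation(s) of $\lambda$. In each relevant case --- $E$ a ramified quadratic field, $E$ the unramified quadratic extension, or $E = F \times F$ --- it is a geometric series (a product of two such series in the split case), convergent for $Re(s)$ large; one needs only to track the normalization of $|\cdot|$ on $E$ and the behaviour of $N_{E/F}$ on a uniformizer of $E$, whose $F$-valuation is $1$ in the ramified case and $2$ otherwise. When $\dim V = 2$ the shift $s + 1 - \dim V/2$ equals $s$, $d^{V}(s) = 1$, and the resulting sum is by definition $L_{E,*}(\alpha_E,s) = L(\pi,Std,s)$, so (\ref{GJSpinInt}) holds. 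When $\dim V = 1$ we have $E = F$, $N_{E/F}(\lambda) = \lambda^2$, $L(\pi,Std,s) = 1$, and $d^{V}(s) = \zeta(\omega_\pi,2s)^{-1}$; the integral collapses to $\sum_{k \geq 0}{\alpha(\varpi)^k q^{-2ks}} = \zeta(\omega_\pi,2s)$, which is $1/d^{V}(s)$ because $\omega_\pi = \alpha$, so (\ref{GJSpinInt}) holds again (the shift $s + \tfrac{1}{2}$ being immaterial since $L(\pi,Std,\cdot)$ is constant).

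I do not anticipate any real obstacle: once $\GSpin(V_{E})$ is identified with $E^\times$, the claim is essentially a one-line Tate zeta-integral computation. The only points needing mild care are reading off correctly, through $\iota_0$, that $\Phi$ restricts to the characteristic function of $\mathcal{O}_E$ and that $\omega_\pi = \alpha_E|_{F^\times}$, and keeping the various normalization conventions straight --- the absolute value on $E$, the definition of $L_{E,*}$, and the argument shift $s + 1 - \dim V/2$ --- across the cases $E$ ramified, $E$ unramified, and $E = F \times F$.
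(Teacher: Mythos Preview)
Your proposal is correct and follows essentially the same route as the paper: identify $\GSpin(V_E)$ with $E^\times$ via Lemma~\ref{iota0}, so that $\Phi$ becomes the characteristic function of $\mathcal{O}_E$, $\nu$ becomes $N_{E/F}$, and $\phi_\alpha = \alpha$; then recognize $I(\alpha,s)$ as a Tate local integral and compare with the definition of $L(\pi,Std,s)$ in subsection~\ref{subsec:Ldef}. The paper's proof is terser---it simply names the Tate integral and invokes the definition---whereas you spell out the case split (ramified, unramified, $E=F\times F$, and $E=F$) and the matching with $d^V(s)$, but the underlying argument is the same.
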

\begin{proof} In this case $\phi_{\alpha} = \alpha$ is an unramified character.  Furthermore, by Lemma \ref{iota0}, $\GSpin(V)(F) = E^\times$, $\Phi$ is the characteristic function of $\mathcal{O}_E$, and $\nu = N_{E/F}$ the norm map if $E$ over $F$ is quadratic, and $\nu(t) = t^2$ if $E = F$.  Thus
\[I(\alpha,s) = \int_{E^\times}{\alpha(t)\charf(t \in \mathcal{O}_E)|N_{E/F}(t)|^{s}\,dt},\]
a Tate local integral.  Comparing with the definition of the $L$-function in subsection \ref{subsec:Ldef} gives the claim in both cases.\end{proof}

\subsection{Case of $\GSp_4$} To make clear the ideas, we first write out a proof of Theorem \ref{GJGSpin} in the case of $\GSpin_5 = \GSp_4$, the subgroup of $g\in \GL_4$ with $g \mm{}{1_2}{-1_2}{}g^{t} = \nu(g) \mm{}{1_2}{-1_2}{}$. Denote by $\tau$ the representation of $M = \GL_1 \times \GL_2$ defined by $\tau = Ind_{B}^{M}(\delta_{P}^{-1/2}\delta_{B}^{1/2} \alpha)$.  We parametrize $M$ by $m(\lambda,y)$, with this pair corresponding to the matrix $m(\lambda,y) = \mm{\lambda y'}{}{}{y}$, where $y' = \det(y) \,^ty^{-1}$.  Applying the Iwasawa decomposition, we have
\begin{align*}I(\alpha,s) &= \int_{M}{\delta_{P}^{-1/2}(m)|\lambda \det(y)|^{s} \charf(y,\lambda y') \phi_{\tau}(m) \left(\int_{U}{\charf(uy)\,du}\right)\,dm} \\ &= \int_{M}{\delta_{P}^{-1/2}(m)|\lambda \det(y)|^{s} \charf(y,\lambda y) \phi_{\tau}(m) ||y||^{-1} |\det(y)|^{-1}\,dm}.\end{align*}
The inner unipotent integral $\int_{U}{\charf(uy)\,du}$ can be computed by observing that it is a bi-$\GL_2(\mathcal{O}_F)$-invariant function of $y$, and then computing it directly for diagonal $y$.  In the general case of $\GSpin(V)$, this integral is computed by Lemma \ref{meas}.  Also, note that $\lambda y \in M_2(\mathcal{O}_F)$ if and only if $\lambda y' \in M_2(\mathcal{O}_F)$, so $\charf(y,\lambda y') = \charf(y,\lambda y)$.

We have $\delta_P(m) = |\lambda|^{3}$.  Also, set 
\[\alpha_1 = \alpha(\mathrm{diag}(p,p,1,1)); \quad \alpha_2 = \alpha(\mathrm{diag}(p,1,p,1)); \quad \alpha_3 = \alpha(\mathrm{diag}(1,p,1,p)); \quad \alpha_4 = \alpha(\mathrm{diag}(1,1,p,p))\]
and denote $\mu(\lambda) = \alpha(m(\lambda,1))$.  

We now integrate over the $\GL_1$ bit of $M$. For $y \in \GL_2$, set $\val(y) = k$ if $y = p^k y_0$ with $y_0 \in M_2(\mathcal{O}_F)\setminus pM_2(\mathcal{O}_F)$, and write $||y|| = |\varpi|^{\val(y)}$.  Integrating over $\lambda$ now yields an inner integral of
\[\int_{\GL_1}{|\lambda|^{s-3/2} \charf(\lambda y) \mu(\lambda)\,d\lambda} = ||y||^{-(s-3/2)}\mu(\varpi)^{-\val(y)}\zeta(\mu,s-3/2).\]
Thus
\begin{align*} I(\alpha,s) &= \zeta(\mu,s-3/2)\int_{\GL_2}{||y||^{-(s-1/2)}\mu(\varpi)^{-\val(y)}|\det(y)|^{s-1}\charf(y)\phi_{\tau}(y)\,dy}\\ &= \zeta(\mu,s-3/2) \int_{\GL_2}{\charf(\val(y)=0)\phi_{\tau}(y) |\det(y)|^{s-1}\,dy} \int_{\GL_1}{|t|^{-s+1/2}\mu(t)^{-1}|t|^{2s-2}\omega_{\pi}(t)\,dt} \\ &= \zeta(\mu,s-3/2)\zeta(\omega_{\pi}/\mu,s-3/2) \int_{\GL_2}{\charf(\val(y)=0)\phi_{\tau}(y) |\det(y)|^{s-1}\,dy} \\ &= \frac{\zeta(\alpha_1,s-3/2) \zeta(\alpha_4,s-3/2)}{\zeta(\omega_{\pi},2s-2)}\int_{\GL_2}{\charf(y)|\det(y)|^{s-1}\phi_{\tau}(y)\,dy}.\end{align*}
But now by Godement-Jacquet theory for $\GL_2 = \GSpin_3$, the $\GL_2$ integral is $\zeta(\alpha_2,s-3/2)\zeta(\alpha_3,s-3/2)$.  Hence
\[I(\alpha,s) = \frac{L(\pi,Spin,s-3/2)}{L(\omega_{\pi},2s-2)},\]
as desired. (The ``Standard" $L$-function on $\GSpin_5$ is the ``Spin" $L$-function of $\GSp_4$.)

\subsection{General case} We now go back to the general case.  Recall $P = MN$ with $N \simeq V_1$ and $M \simeq \GL_1 \times \GSpin(V_1)$.  
We begin the computation of the Satake transform of $\Phi$ by computing an inner integral over $N$.  Recall that for $y$ in $\GSpin(V_1)$, we set
\[U_y = \{ x \in V_1: xy \in \Clif(\Lambda_1)\}.\]
In Lemma \ref{meas}, we computed the measure of $U_y$ to be $||y||^{2-\dim(V_1)} |\nu(y)|^{-1}$.

Now we compute the Satake transform.  Recall that $\phi_\alpha$ is induced from the $K_1$-spherical representation $\tau$ on $M$, and denote by $\mu$ the character of $\GL_1$ that is the restriction of $\tau$ to $m(\lambda,1)$.  Note that $\delta_P(m(\lambda,y)) = |\lambda|^{\dim V_1}$, and $\Phi(m(\lambda,y)) = \charf(y, \lambda y)$.  We obtain
\begin{align*} I(\alpha,s) &= \int_{M}{\delta_P^{-1/2}(m) \phi_{\tau}(m) |\nu(m)|^{s} \charf(y,\lambda y) ||y||^{2-\dim V_1} |\nu(y)|^{-1}\,dm} \\ &= \int_{\GSpin(V_1)}{\phi_{\tau}(m(1,y)) \charf(y) |\nu(y)|^{s-1} ||y||^{2-\dim V_1}\,dy}\int_{\GL_1}{|\lambda|^{s-\dim V_1/2}\mu(\lambda) \charf(\lambda y)\, d\lambda}.\end{align*}
The inner integral is
\[\mu(\varpi)^{-\val(y)}||y||^{\dim V_1/2 -s} \zeta(\mu,s-\dim V_1/2).\]
Thus
\begin{align*} I(\alpha,s) &= \zeta(\mu,s-\dim V_1/2) \int_{\GSpin(V_1)}{\mu(\varpi)^{-\val(y)} \phi_{\tau}(m(1,y)) \charf(y) |\nu(y)|^{s-1} ||y||^{2-\dim V_1/2 -s}\,dy} \\ &= \zeta(\mu,s-\dim V_1/2) \int_{\GSpin(V_1)}{\charf(\val(y) = 0)\phi_{\tau}(y) |\nu(y)|^{s-1}\,dy} \\ &\quad \times \int_{\GL_1}{\charf(t) \mu(t)^{-1}\omega_{\pi}(t) |t|^{2s-2}|t|^{2 - \dim V_1/2 -s}\,dt} \\ &= \zeta(\mu,s-\dim V_1/2)\zeta(\omega_{\pi}/\mu,s-\dim V_1/2)\int_{\GSpin(V_1)}{\charf(\val(y)=0)\phi_{\tau}(y) |\nu(y)|^{s-1}\,dy} \\ &= \frac{\zeta(\mu,s-\dim V_1/2)\zeta(\omega_{\pi}/\mu,s-\dim V_1/2)}{\zeta(\omega_{\pi},2s-2)}\int_{\GSpin(V_1)}{\phi_{\tau}(y)\charf(y) |\nu(y)|^{s-1}\,dy} \\ &= \frac{\zeta(\mu,s-\dim V_1/2)\zeta(\omega_{\pi}/\mu,s-\dim V_1/2)}{\zeta(\omega_{\pi}, 2s-2)} I_{V_1}(\tau,s-1).\end{align*}
Since $d^{V}(s) = \zeta(\omega_{\pi},2s-2)d^{V_1}(s-1)$, Theorem \ref{GJGSpin} follows.

\begin{remark}\label{rmk:exactL} It is easy to remove the denominator $d^V(s)$ from Theorem \ref{GJGSpin}, as follows.  For $g \in \Clif(V)$, define $\Phi_{\geq n}(g) = \Phi(\varpi^{-n} g) = \charf(\val(g) \geq n)$, the characteristic function of those $g \in \Clif(V)$ for which $\val(g) \geq n$, or equivalently, the characteristic function of $\varpi^{n}\Clif(\Lambda)$.  Similarly, define $\Phi_{n}(g) = \Phi_{\geq n}(g) - \Phi_{\geq n+1}(g) = \charf(\val(g) = n)$.  Note that, if $f$ is a function on $\GSpin(V)$, then
\begin{equation}\label{center_trans}\omega_{\pi}(\varpi)^n|\varpi|^{2sn}\int_{\GSpin(V)}{f(g)|\nu(g)|^{s}\langle v_0^\vee, \pi(g) v_0 \rangle \,dg} = \int_{\GSpin(V)}{f(\varpi^{-n} g)|\nu(g)|^{s}\langle v_0^\vee, \pi(g) v_0 \rangle \,dg}.\end{equation}

Now, set $q = |\varpi|^{-1}$, and for $N \geq 0$ define polynomials $p_{V,N}(q)$ by the generating series
\[\prod_{2 \leq j \leq \dim(V)-2: j \equiv 0 (2)}{(1-q^j X)^{-1}} = \sum_{N \geq 0}{p_{V,N}(q)X^N}.\]
Set $\Phi'(g) = \sum_{N \geq 0}{p_{V,N}(q)\Phi_{\geq N}(g)}$.  Then it follows from Theorem \ref{GJGSpin} and (\ref{center_trans}) that
\[L(\pi,Std,s+1-\dim(V)/2) = \int_{\GSpin(V)(F)}{\Phi'(g)|\nu(g)|^{s}\langle v_0^\vee, \pi(g) v_0\rangle \,dg}.\]
Equivalently, $\Phi'(g) = \sum_{M \geq 0}{p'_{V,M}(q)\Phi_{M}(g)}$ with the polynomials $p'_{V,M}$ defined by the generating series
\[\sum_{M \geq 0}{p'_{M,V}(q)X^M} = (1-X)^{-1}\sum_{N \geq 0}{p_{V,N}(q)X^N} = \prod_{0 \leq j \leq \dim(V)-2: j \equiv 0 (2)}{(1-q^j X)^{-1}}.\]

As in \cite{getz}, one can also get rid of the denominator $d^V(s)$ by putting Schwartz-Bruhat functions on auxiliary copies of $\GL_1$.
\end{remark}

\section{The pullback formula}\label{section:pullback}
In this section we give a quick review of the pullback formula, due to Garrett \cite{garrett, garrett2} and Shimura \cite{shimuraBook}, in the special case of orthogonal groups.  Analogous results \cite{garrett2, shimuraBook} hold for symplectic and unitary groups.  The setup is as follows.  We have two quadratic spaces $V_0 \hookrightarrow V$, with $V = U^\vee \oplus V_0 \oplus U$.  Here $U$ is isotropic and $U^\vee \oplus U$ is $V_0^\perp$ inside of $V$. Out of this data, one constructs another quadratic space $W = V \oplus V_0^{-}$, where here $V_0^{-}$ means the space $V_0$ as a vector space but with quadratic form the negative of that of $V_0$.  The space $W$ is split; it has maximal isotropic subspace $X = \Delta(V_0) \oplus U$ with $\Delta(V_0) = \{((0,v,0),v) \in W: v \in V_0\}$.  We write $P_{X}$ for the maximal (Siegel) parabolic subgroup of $\SO(W)$ that stabilizes $X$, and $P_U$ for the parabolic subgroup of $\SO(V)$ that stabilizes the isotropic subspace $U$ of $V$.

Suppose $\alpha$ is an automorphic cusp form on $\SO(V_0^{-})$, and $\Phi$ is a Schwartz-Bruhat function on the spinor module $S_X(W):=(X\Clif(W))\backslash \Clif(W)$.  Here $X\Clif(W) \subseteq \Clif(W)$ is the right ideal generated by $X$.  Denote by $1_X$ the image of $1$ from $\Clif(W)$ in $X\Clif(W)\backslash \Clif(W)$.  Out of $\Phi$ one defines a Siegel Eisenstein series on $\SO(W)$: One sets
\[f_{X}(g,\Phi,s) = |\nu(g)|^{s} \int_{\GL_1(\A)}{\Phi(t 1_{X} g)|t|^{2s}\,dt}\]
and $E_{X}(g,\Phi,s) = \sum_{\gamma \in P_{X}(F)\backslash \GSpin(W)(F)}{f(\gamma g,\Phi,s)}$.  One can pull back the Siegel Eisenstein series $E_{X}(g,\Phi,s)$ to the product $\SO(V) \times \SO(V_0^{-})$, and integrate against the cusp form $\alpha$ on $\SO(V_0^{-})$ to get an automorphic function on $\SO(V)$.  The pullback formula says that this construction yields an Eisenstein series on $\SO(V)$ for the parabolic $P_{U}$ with data given by $\alpha$.

We now explain this more precisely.  First, we give the requisite properties of the function $f_{X}(g,\Phi,s)$.  If $p \in P_{X}$, then the image of $p \in \SO(W)$ is of the form $\mm{m(p)}{*}{}{(m(p)^t)^{-1}}$ for a matrix $m(p) \in \GL(W/X)$.  We have the following lemma.
\begin{lemma}\label{EisPsec} If $p \in P_{X}$, then $1_{X}p = \alpha(p)1_{X}$ in $S_X(W) = (X\Clif(W))\backslash \Clif(W)$ for a character $\alpha: P \rightarrow \GL_1$.  This character $\alpha$ satisfies $\alpha(p)^2/\nu(p) = \det(m(p))^{-1}$.  Consequently, the section $f_{X}(g,\Phi,s)$ satisfies 
\[f_{X}(pg,\Phi,s) = |\det(m(p))|^{s}f_{X}(g,\Phi,s) = \delta_{P_X}(p)^{\frac{s}{n-1}}f_{X}(g,\Phi,s)\]
for all $p \in P_{X}$, where $\delta_{P_X}$ is the modulus character of $P_{X}$ and $n = \dim(X)$. \end{lemma}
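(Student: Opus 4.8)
The plan is to prove the three assertions of Lemma \ref{EisPsec} in order: existence of the character $\alpha$, the identity $\alpha(p)^2/\nu(p)=\det(m(p))^{-1}$, and the resulting transformation law for $f_X$. For Step 1 (the character): since $X\Clif(W)$ is a right ideal and $p\in\GSpin(W)$ is a unit of $\Clif(W)$, right multiplication by $p$ preserves $X\Clif(W)$ and so acts on $S_X(W)=(X\Clif(W))\backslash\Clif(W)$; thus $1_X\cdot p$ is simply the image of $p\in\Clif(W)$ in $S_X(W)$, and it remains to see this is a scalar multiple of $1_X$. As $X$ is maximal isotropic, $X^\perp=X$, so $W_0^X=W_{-1}^X=X$ in the filtration of subsection \ref{subsec:filtration}, and Lemma \ref{PUVlemma} (with $U=X$, $V_0=0$) gives $P_X=W_0^X\Clif(W)\cap\GSpin(W)$; in particular $p\in W_0^X\Clif(W)$. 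Fix a complementary isotropic $X^\vee$ and a hyperbolic basis $e_1,\dots,e_n,f_n,\dots,f_1$ of $W$ with $X=\langle f_1,\dots,f_n\rangle$, $X^\vee=\langle e_1,\dots,e_n\rangle$ ($n=\dim X$). Relative to the PBW basis $\{x_S\xi_T\}$ of $\Clif(W)$ — products of the $f_i$ ($i\in S$) times products of the $e_j$ ($j\in T$) — the cocharacter $h_X$ of subsection \ref{subsec:filtration} acts on $x_S\xi_T$ by $t^{|T|-|S|}$, so $W_k^X\Clif(W)=\bigoplus_{|T|-|S|\le k}Fx_S\xi_T$, while $X\Clif(W)=\bigoplus_{S\ne\emptyset}Fx_S\xi_T$; hence $S_X(W)$ has basis the images of the $\xi_T$, with $1_X$ the image of $\xi_\emptyset=1$. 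Since $p\in W_0^X\Clif(W)$ has no PBW component with $S=\emptyset$, $T\ne\emptyset$, its image in $S_X(W)$ is a scalar $\alpha(p)$ times $1_X$. Multiplicativity of $\alpha$ is immediate, and $\alpha(p)\alpha(p^{-1})=1$ shows $\alpha(p)\in\GL_1$; running the argument over an arbitrary base shows $\alpha$ is an algebraic character $P_X\to\GL_1$.

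For Step 2 (the formula), both $\alpha^2/\nu$ and $(\det\circ m)^{-1}$ are algebraic characters of $P_X$, so it suffices to check the identity on a maximal torus of the Levi $M_X$; indeed the unipotent radical $N_X=\exp(\wedge^2 X)$ is killed by all three factors (for $\alpha$: an element $n\in N_X$ satisfies $n\equiv 1\pmod{X\Clif(W)}$, since the positive-degree terms of $\exp$ lie in $\wedge^2 X\subseteq X\Clif(W)$; for $\nu$ and $m$ this is clear). With $X=\langle f_1,\dots,f_n\rangle$, a maximal torus of $M_X$ is $\{z(t_0)\prod_i m_i(t_i)\}$ with $z$, $m_i(t)=te_if_i+f_ie_i$ as in subsection \ref{subsec:maxtorus}. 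From $f_ie_i\in X\Clif(W)$ and $e_if_i=(e_i,f_i)-f_ie_i\equiv 1\pmod{X\Clif(W)}$ one gets $1_X\cdot m_i(t)=t\,1_X$ and $1_X\cdot z(t)=t\,1_X$, so $\alpha(z(t_0)\prod_i m_i(t_i))=t_0\prod_i t_i$; also $\nu(m_i(t))=t$ and $\nu(z(t))=t^2$, so $\nu=t_0^2\prod_i t_i$ there. Hence $\alpha^2/\nu=\prod_i t_i=\det(\cdot|_X)$ on this torus, where $|_X$ denotes the action on $X$ (each $m_i(t)$ scales $f_i$ by $t$ and fixes the other $f_j$, and $z(t)$ acts trivially on $X$). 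Finally, $X$ being maximal isotropic, the form on $W$ induces a perfect $\SO(W)$-equivariant pairing $X\otimes(W/X)\to F$, whence $\det(m(p))=\det(p|_X)^{-1}$ for all $p\in P_X$; combining gives $\alpha(p)^2/\nu(p)=\det(m(p))^{-1}$.

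For Step 3 (transformation of $f_X$), write $f_X(pg,\Phi,s)=|\nu(pg)|^s\int_{\GL_1}\Phi(t\,1_X p\,g)\,|t|^{2s}\,dt$, use $1_X p=\alpha(p)1_X$, and substitute $t\mapsto\alpha(p)^{-1}t$ in the Haar integral, which produces a factor $|\alpha(p)|^{-2s}$; this yields $f_X(pg,\Phi,s)=|\nu(p)|^s|\alpha(p)|^{-2s}f_X(g,\Phi,s)=|\nu(p)/\alpha(p)^2|^s f_X(g,\Phi,s)=|\det(m(p))|^s f_X(g,\Phi,s)$ by Step 2. The equality $|\det(m(p))|^s=\delta_{P_X}(p)^{s/(n-1)}$ is the standard modulus-character computation for the Siegel parabolic: $N_X=\exp(\wedge^2 X)$, on which $M_X$ acts via $\wedge^2$ of its action on $X$, with determinant $\det(p|_X)^{n-1}$, so $\delta_{P_X}(p)=|\det(m(p))|^{n-1}$ in the paper's normalization. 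The substantive content is Steps 1 and 2 — recognizing that $p\in P_X$ lies in $W_0^X\Clif(W)$, so that $1_X$ spans a $P_X$-stable (pure spinor) line in $S_X(W)$, and then pinning down $\alpha$ by the torus computation; Step 3 and the $\delta_{P_X}$ bookkeeping are routine.
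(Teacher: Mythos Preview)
Your proof is correct. Step~1 is essentially the paper's argument made explicit: the paper observes that $1_X$ spans all of $\overline{W^X_0}S_X(W)$ and invokes Lemma~\ref{PUVlemma}, while you unpack this via the PBW basis $\{x_S\xi_T\}$ and the $h_X$-grading, reaching the same conclusion that $p\in W^X_0\Clif(W)$ has scalar image in $S_X(W)$.

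The genuine difference is in Step~2. The paper's argument is a one-line trick: set $x=x_1\cdots x_n$ with $x_i$ a basis of $X$, and compute $p^{-1}xp$ two ways. On one hand $p^{-1}xp=\det(p|_X)\,x=\det(m(p))^{-1}x$; on the other, since $p\in\alpha(p)+X\Clif(W)$ and $p^*\in\alpha(p)+\Clif(W)X$ while $Xx=xX=0$, one gets $p^{-1}xp=\nu(p)^{-1}p^*xp=\nu(p)^{-1}\alpha(p)^2x$. Equating gives the identity for every $p\in P_X$ at once. Your route instead reduces to a maximal torus (legitimate since $P_X$ is connected) and checks $\alpha$, $\nu$, and $\det(\,\cdot\,|_X)$ on the generators $z(t_0)$, $m_i(t_i)$, then appeals to the perfect pairing $X\otimes(W/X)\to F$ to convert $\det(\,\cdot\,|_X)$ into $\det(m(p))^{-1}$. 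Both approaches are short; the paper's is basis-free and handles all of $P_X$ simultaneously, while yours is more hands-on but perhaps easier to verify line by line. Step~3 is identical in both.
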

\begin{proof} Recall the filtration $W^X_{\bullet}$ on $\Clif(W)$ defined in subsection \ref{subsec:filtration}.  The filtration $W^X$ induces one $\overline{W^X}$ on $S_X(W)$ by saying $s \in \overline{W^X_{\ell}} S_X(W)$ if there exists $\tilde{s} \in W^{X}_\ell \Clif(W)$ whose image in $S_X(W)$ is $s$.  With this definition, one sees that $1_{X}$ spans all of $\overline{W^X_0}$.  It thus follows from Lemma \ref{PUVlemma} that $1_X$ is an eigenvector for all of $P_X$, i.e., that there exists $\alpha: P_X \rightarrow \GL_1$ with $1_X p = \alpha(p) 1_X$ for all $p \in P_X$.

Choose a basis $x_1, x_2, \ldots, x_n$ for $X$, and set $x = x_1 \cdots x_n$ in $\Clif(W)$.  Then on the one hand, if $p \in P_X$, then $p^{-1}x p = \det(m(p))^{-1}x$.  On the other hand, since $p \in \alpha(p) + X\Clif(W)$, $p^{-1}xp = \nu(p)^{-1} p^* xp = \alpha(p)^2\nu^{-1}(p) x$.  The lemma follows.  \end{proof}

It follows from the lemma that $E_{X}(g,\Phi,s)$ is in fact defined and is a Siegel Eisenstein series. Now, the decomposition $W = V \oplus V_0^{-}$ gives rise to maps $\GSpin(V) \rightarrow \GSpin(W)$, $\GSpin(V_0^{-}) \rightarrow \GSpin(W)$ and an injection 
\[(\GSpin(V) \times \GSpin(V_0^{-}))/\{(z,z^{-1}): z \in \GL_1\} \rightarrow \GSpin(W).\]
For $g \in \GSpin(V)$, set
\begin{align*}f_{U}(g,\Phi,\alpha,s) &= |\nu(g)|^{s} \int_{\GSpin(V_0^{-})(\A)}{\Phi(1_{X}(g,h))\alpha(h)|\nu(h)|^{s}\,dh} \\ &= \int_{\SO(V_0^{-})(\A)}{f_{X}((g,h),\Phi,s)\alpha(h)\,dh}.\end{align*}
For $Re(s)$ sufficiently large, these integrals converge absolutely. Note that if $\gamma \in P_U(F)$, then $f_{U}(\gamma g,\Phi,\alpha,s) = f_{U}(g,\Phi,\alpha,s)$.  Furthermore, suppose $p \in P_{U}$, and $p$ acts trivially on $V_0 = U^{\perp}/U$.  Then the image of $p$ in $\SO(V)$ is a matrix of the form 
\[\left(\begin{array}{ccc} m(p) &*&*\\ & 1_{V_0}&*\\ & &(m(p)^{t})^{-1}\end{array}\right)\]
for some element $m(p) \in \GL(W/U^\perp)$, and one has $f_{U}(pg,\Phi,\alpha,s) = |\det(m(p))|^{s}f(g,\Phi,\alpha,s)$.  Thus one can define an Eisenstein series $E_{U}(g,\Phi,\alpha,s) = \sum_{\gamma \in P_U(F) \backslash \GSpin(V)(F)}{f_{U}(\gamma g,\Phi,\alpha,s)}$. 

The following proposition is the pullback formula.
\begin{proposition}[Garrett \cite{garrett, garrett2} Shimura \cite{shimuraBook}] \label{prop:pullback} Suppose $\alpha$ is cuspidal and $V_0$ is anisotropic.  Then one has the identity
\[E_{U}(g,\Phi,\alpha,s) =\int_{\GSpin(V_0^{-})(F)Z(\A)\backslash \GSpin(V_0^{-})(\A)}{E_{X}((g,h),\Phi,s)\alpha(h)\,dh}.\]
Furthermore, assume $\tau$ is a cuspidal automorphic representation of $\SO(V_0)$, and $\alpha$ is a cusp form in the space of $\tau$.  Then the section $f_U(g,\Phi,\alpha,s)$ satisfies the normalization property
\[f_U(1,\Phi,\alpha,s) = \frac{L^S(\tau,Std,s+1-\dim(V_0)/2)}{d^{V_0}(s)}\int_{\GSpin(V_0^{-})(\A_S)}{\Phi_{S}(1_X(1,h))\alpha(h)|\nu(h)|^{s}\,dh}\]
for a sufficiently large finite set of places $S$.  Here $\Phi = \Phi^{S} \otimes \Phi_S$ is a factorization of $\Phi$ into Schwartz-Bruhat functions away from $S$ and at the finite set of places $S$, and $\A_S = \prod_{v \in S}{F_v}$. \end{proposition}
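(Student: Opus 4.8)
The plan is to separate the statement into two parts: (i) the \emph{unfolding identity} expressing $E_U$ as the pullback of $E_X$ integrated against $\alpha$, and (ii) the \emph{normalization} computing $f_U(1,\Phi,\alpha,s)$ in terms of the standard $L$-function of $\tau$. For part (i), I would start from the definition $E_X((g,h),\Phi,s) = \sum_{\gamma \in P_X(F)\backslash \GSpin(W)(F)}{f_X(\gamma(g,h),\Phi,s)}$ and restrict to the embedded subgroup $(\GSpin(V)\times\GSpin(V_0^{-}))/\{(z,z^{-1})\}$. The key combinatorial input is the orbit decomposition of $P_X(F)\backslash \GSpin(W)(F) / (\GSpin(V)\times\GSpin(V_0^{-}))(F)$: under the hypothesis that $V_0$ is anisotropic, the only orbit contributing a nonvanishing term after integrating the cusp form $\alpha$ over $\GSpin(V_0^{-})(F)Z(\A)\backslash\GSpin(V_0^{-})(\A)$ is the ``big cell'' orbit whose stabilizer in $\GSpin(V)\times\GSpin(V_0^{-})$ is (essentially) the graph of an isomorphism $\GSpin(V_0)\cong\GSpin(V_0^{-})$ sitting diagonally inside the Levi of $P_U\times\GSpin(V_0^{-})$; the remaining orbits are fibered over proper parabolics of $\GSpin(V_0^{-})$ and hence die against cuspidality. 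Carrying out this orbit computation — identifying representatives, computing stabilizers, and checking which orbits survive the cusp form — is the main obstacle, and is the classical content of Garrett's and Shimura's work; I would cite \cite{garrett, garrett2, shimuraBook} for the orbit analysis and only verify that the specific isotropic subspace $X = \Delta(V_0)\oplus U$ and the parabolic $P_U$ match up correctly in the $\GSpin$ (rather than $\SO$) setting, using the filtration machinery of Lemma \ref{PUVlemma} and Lemma \ref{EisPsec} to control the central $\GL_1$'s.

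For part (ii), I would compute $f_U(1,\Phi,\alpha,s)$ directly from its definition as
\[f_U(1,\Phi,\alpha,s) = \int_{\SO(V_0^{-})(\A)}{f_X((1,h),\Phi,s)\alpha(h)\,dh}.\]
The idea is to factor $\Phi = \Phi^S\otimes\Phi_S$, pull the local integrals at places in $S$ out front (these give the factor $\int_{\GSpin(V_0^{-})(\A_S)}\Phi_S(1_X(1,h))\alpha(h)|\nu(h)|^s\,dh$ appearing in the statement), and for the unramified places $v\notin S$ choose $\Phi_v$ to be the characteristic function of $\Clif(\Lambda_W)$ at $v$. At an unramified place the vector $1_X$ lands in the spinor module $S_X(W)$, and I claim the image of $\Clif(\Lambda_W)$ in $S_X(W)$ together with the action of $\GSpin(V_0^{-})(F_v)$ makes the local integral $\int_{\GSpin(V_0^{-})(F_v)}{\Phi_v(1_X(1,h))\phi_{\tau_v}(h)|\nu(h)|^s\,dh}$ precisely of the form computed in Theorem \ref{GJGSpin} — that is, $\Phi_v(1_X(1,h)) = \Phi_{V_0^{-},v}(h)$ is the characteristic function of $\Clif(\Lambda_{V_0})(F_v)$ viewed inside $\GSpin(V_0^{-})(F_v)$. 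Granting this identification, Theorem \ref{GJGSpin} applied to the spherical representation $\tau_v^{-}$ of $\GSpin(V_0^{-})(F_v)$ yields exactly $L(\tau_v,Std,s+1-\dim(V_0)/2)/d^{V_0}_v(s)$ at each such place, and taking the product over $v\notin S$ produces $L^S(\tau,Std,s+1-\dim(V_0)/2)/d^{V_0}(s)$.

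The step I expect to be genuinely delicate is the identification at unramified places of the restriction of $1_X(\cdot)$ along $\GSpin(V_0^{-})$ with the Godement–Jacquet test function $\Phi$ on $\Clif(V_0^{-})$ from Theorem \ref{GJGSpin}. Concretely, one must verify that $h\mapsto \Phi_v(1_X(1,h))$ — a priori a function on $\GSpin(V_0^{-})(F_v)$ defined via the spinor module of the \emph{big} space $W$ — really is the characteristic function of $\Clif(\Lambda_{V_0^{-}})$, which amounts to a bookkeeping computation inside $\Clif(W) = \Clif(V\oplus V_0^{-})$ using the decomposition $\Clif(W)\cong\bigwedge^\bullet U^\vee\otimes\Clif(V_0)\otimes\Clif(V_0^{-})\otimes\bigwedge^\bullet U$ (compatibly with lattices) together with the relation $1_X\cdot u = 0$ and $1_X\cdot\Delta(v) = (\text{something integral})\cdot 1_X$ for $v\in V_0$. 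Once this local dictionary is set up, everything else is formal: absolute convergence for $\mathrm{Re}(s)$ large is inherited from the defining integrals, the transformation properties under $P_U$ were recorded before the statement, and the global integral representation follows by combining (i) and (ii). I would present (i) by reduction to the cited literature with the $\GSpin$-specific verifications, and (ii) as the new computation, since it is where Theorem \ref{GJGSpin} does its work.
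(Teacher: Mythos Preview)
Your proposal is essentially correct and follows the same outline as the paper, but with two noteworthy differences.

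For part (i), you anticipate multiple $P_X\backslash \GSpin(W)/(\GSpin(V)\times\GSpin(V_0^{-}))$ orbits, with the non-main ones killed by cuspidality of $\alpha$.  The paper instead proves directly (Lemma \ref{orbitLemma}) that under the hypothesis $V_0$ anisotropic the double coset is a \emph{singleton}: every maximal isotropic $Y\subseteq W$ in the $\SO(W)$-orbit of $X$ is already an $H(F)$-translate of $X$.  So cuspidality is never invoked in the unfolding; the anisotropic hypothesis does all the work.  Your approach is what one needs in the general (non-anisotropic) setting and would of course succeed here as well---there simply are no ``other orbits'' since $\GSpin(V_0^{-})$ has no proper parabolics---but the paper's route is shorter and self-contained rather than deferring to \cite{garrett2,shimuraBook}.

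For part (ii), your identification of the delicate step is exactly right: the computation $\Phi_v(1_X(1,h)) = \Phi_{\Lambda(V_0^{-})}(h)$ is precisely Lemma \ref{pullbackPhi}, proved via the lattice decomposition $\Clif(\Lambda(W)) = \Clif(\Lambda(U^\vee))\otimes\Clif(\Lambda(V_0^{-}))\otimes\Clif(\Lambda(X))$ you describe.  One point you glide over is how the global integral $\int_{\GSpin(V_0^{-})(\A)}\Phi(1_X(1,h))\alpha(h)|\nu(h)|^s\,dh$ factors into a product of local integrals against the spherical vector $\phi_{\tau_v}$, given that the cusp form $\alpha$ does not itself factor as a function.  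The paper handles this by invoking the ``non-unique functional'' method of \cite[Theorem 1.2]{bfg}: at each unramified place the map $v\mapsto \int \Phi_v(h)(\tau_v(h)v)|\nu(h)|^s\,dh$ is a $K_v$-intertwiner on $\tau_v$, hence scalar on the spherical line, and that scalar is computed by Theorem \ref{GJGSpin}.  You should make this step explicit rather than writing the local integrand as $\phi_{\tau_v}(h)$ without comment.
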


The pullback formula holds more generally than when $V_0$ is anisotropic, but for simplicity we just consider this case.  We begin with the following lemma.
\begin{lemma}\label{pullbackPhi} Suppose $F$ is a nonarchimedean local field.  Denote by $\Lambda(V)$ a maximal lattice in $V$, and similarly $\Lambda(V_0), \Lambda(U), \Lambda(U^\vee)$ lattices in $V_0$, $U$, and $U^\vee$, and assume $\Lambda(V) = \Lambda(U^\vee) \oplus \Lambda(V_0) \oplus \Lambda(U)$.  Set $\Lambda(W) = \Lambda(V) \oplus \Lambda(V_0^{-})$, where $\Lambda(V_0^{-})=\Lambda(V_0)$, except considered inside $V_0^{-}$.  Suppose $\Phi_{X}$ is the characteristic function of the image of $\Clif(\Lambda(W))$ inside $S_{X}(W)$.  Then $\Phi_{X}(1_X(1,h)) = \Phi_{\Lambda(V_0^{-})}(h)$, where $\Phi_{\Lambda(V_0^{-})}$ denotes the characteristic function of $\Clif(\Lambda(V_0^{-}))$ inside $\Clif(V_0^{-})$.  Similarly, $\Phi_{X}(1_X(h,1)) = \Phi_{\Lambda(V_0)}(h)$.\end{lemma}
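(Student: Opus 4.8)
The plan is to compute $1_X(1,h)$ explicitly inside the spinor module $S_X(W) = (X\Clif(W))\backslash\Clif(W)$ and to identify the resulting quotient module, together with its lattice structure, with $\Clif(V_0^-)$ equipped with the lattice $\Clif(\Lambda(V_0^-))$. First I would recall the decomposition $W = V\oplus V_0^- = U^\vee\oplus V_0\oplus U\oplus V_0^-$ and the maximal isotropic $X = \Delta(V_0)\oplus U$, where $\Delta(V_0) = \{((0,v,0),v):v\in V_0\}$. The key linear-algebra observation is that $W = X\oplus X'$, where $X'$ is the complementary isotropic subspace $X' = U^\vee\oplus\bar\Delta(V_0)$ with $\bar\Delta(V_0) = \{((0,v,0),-v)\}$; note that $\bar\Delta(V_0)$ is \emph{not} isotropic in $W$ in general (it is a copy of $V_0$ with its original form doubled up to sign), so one must be a little careful, but in any case $W/X\cong X'$ as a vector space and $S_X(W)\cong \bigwedge^\bullet(X')$ as a module. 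The point of choosing $\Lambda(W) = \Lambda(V)\oplus\Lambda(V_0^-)$ and $\Phi_X = $ char.\ function of the image of $\Clif(\Lambda(W))$ is exactly that this image, as a lattice in $S_X(W)$, is the image of $\bigwedge^\bullet\Lambda(\bar\Delta(V_0))$, which under the obvious isomorphism $\bar\Delta(V_0)\cong V_0\cong V_0^-$ is $\Clif(\Lambda(V_0^-))$ (additively; we only need the lattice, not the algebra structure).

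Next I would unwind the group action. The element $(1,h)\in\GSpin(V)\times\GSpin(V_0^-)\subseteq\GSpin(W)$ acts on $V_0^-\subseteq W$ via $h$ and fixes $V = U^\vee\oplus V_0\oplus U$ pointwise. Inside $\Clif(W)$, right multiplication of $1_X$ by $(1,h)$ therefore only moves the $V_0^-$-components. Since $\Delta(V_0)\subseteq X$ and $U\subseteq X$, any factor of an element of $\Clif(W)$ lying in $\Delta(V_0)$ or $U$ can be pushed to the left and killed modulo $X\Clif(W)$; what survives is precisely the $\bar\Delta(V_0)$-part, which is where the $V_0^-$-factors live after using $\Delta(V_0)\equiv 0$ to trade a $V_0$-vector for the corresponding $V_0^-$-vector up to sign. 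Concretely, for $v\in V_0$ one has $((0,v,0),0)\equiv 0$ and hence $((0,0,0),v)\equiv ((0,-v,0),0) + ((0,v,0),v)\equiv 0$ is \emph{wrong}; rather $((0,v,0),v)\in X$ gives $((0,0,0),v)\equiv -((0,v,0),0)$ in $S_X(W)$, i.e.\ the $V_0^-$-vector $v$ and the $V_0$-vector $-v$ represent the same class. Using this I would show that the map $\Clif(V_0^-)\to S_X(W)$, $z\mapsto 1_X\cdot z$ (multiplication in $\Clif(W)$ via the inclusion $\Clif(V_0^-)\subseteq\Clif(W)$), is an isomorphism of right $\GSpin(V_0^-)$-modules carrying $\Clif(\Lambda(V_0^-))$ onto the image of $\Clif(\Lambda(W))$. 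Granting this, $1_X(1,h) = (1_X\cdot 1)\cdot h = $ the image of $h\in\Clif^+(V_0^-)$ under this isomorphism, so $\Phi_X(1_X(1,h)) = \charf(h\in\Clif(\Lambda(V_0^-))) = \Phi_{\Lambda(V_0^-)}(h)$, as claimed. The statement $\Phi_X(1_X(h,1)) = \Phi_{\Lambda(V_0)}(h)$ follows symmetrically, using that $\Delta(V_0)\subseteq X$ makes the $V_0\subseteq V$-part of $\Clif(W)$ play the analogous role — here one uses $((0,v,0),0)\equiv ((0,0,0),v)\cdot(-1)$ to reduce to $\Clif(V_0)$, with $V$ acting through $h$ and $V_0^-$ fixed.

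The main obstacle I anticipate is bookkeeping the lattice normalizations carefully: one must check that $\Lambda(W) = \Lambda(V)\oplus\Lambda(V_0^-)$ is maximal in $W$ (this is where the hypothesis $\Lambda(V) = \Lambda(U^\vee)\oplus\Lambda(V_0)\oplus\Lambda(U)$ and $V_0$ anisotropic enter), and that passing to the quotient $S_X(W)$ does not distort the lattice — i.e.\ that the image of $\Clif(\Lambda(W))$ really is a \emph{free} $\mathcal{O}_F$-module identified with $\Clif(\Lambda(V_0^-))$ rather than some intermediate or saturated lattice. Concretely this amounts to verifying that $\Clif(\Lambda(W)) = X\Clif(\Lambda(W)) \oplus \big(\text{image of }\Clif(\Lambda(V_0^-))\big)$ as $\mathcal{O}_F$-modules, which I would prove using the linear isomorphism $\Clif(W)\cong\bigwedge^\bullet X\otimes\Clif(W/X)$ from subsection \ref{subsec:filtration} applied integrally, together with the fact that $X$ is a direct summand of $\Lambda(W)$. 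Everything else is a straightforward computation in the Clifford algebra using the relations $v^2 = q(v)$ and the reversal involution, and the observation (already implicit in the proof of Lemma \ref{EisPsec}) that $1_X$ spans $\overline{W^X_0}S_X(W)$, which pins down the module structure.
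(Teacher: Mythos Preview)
Your overall strategy---choose a complement to $X$ in $W$, use the resulting multiplicative decomposition of the Clifford algebra, and check that the lattice behaves well in the quotient---is the right one, and is essentially what the paper does. But there is a genuine gap in your execution when $U\neq 0$.

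The central error is the claim that $z\mapsto 1_X\cdot z$ gives an \emph{isomorphism} $\Clif(V_0^-)\to S_X(W)$, and relatedly that $\Clif(\Lambda(W)) = X\Clif(\Lambda(W))\oplus\bigl(\text{image of }\Clif(\Lambda(V_0^-))\bigr)$. Count dimensions: $\dim X = \dim U + \dim V_0$, so $\dim_F S_X(W) = 2^{\dim U + \dim V_0}$, whereas $\dim_F\Clif(V_0^-) = 2^{\dim V_0}$. These agree only when $U=0$ (the pure doubling case). For general $U$ the map $\Clif(V_0^-)\to S_X(W)$ is only an injection, and the complement of $X\Clif(W)$ in $\Clif(W)$ is much larger than $\Clif(V_0^-)$. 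So your proposed direct-sum decomposition cannot hold, and the argument as written does not close. (A smaller slip: $\bar\Delta(V_0)$ \emph{is} isotropic in $W$, since $q_W((0,v,0),-v) = q_0(v) - q_0(v) = 0$; this does not affect the main issue.)

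The paper's fix is to keep track of the $U^\vee$-piece explicitly. One uses the decomposition $W = U^\vee\oplus V_0^-\oplus X$ (note: $V_0^-$ itself, not $\bar\Delta(V_0)$), which gives the linear splitting
\[
\Clif(\Lambda(W)) \;=\; \Clif(\Lambda(U^\vee))\otimes\Clif(\Lambda(V_0^-))\otimes\Clif(\Lambda(X)).
\]
If $1_X h$ lies in the image of $\Clif(\Lambda(W))$, write $h - g_0 \in X\Clif(W)$ with $g_0$ integral; using the splitting, one may replace $g_0$ by some $g_1\in\Clif(\Lambda(U^\vee))\otimes\Clif(\Lambda(V_0^-))$ with $h-g_1\in X\Clif(W)$. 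The key point is then purely linear: $\bigl(\Clif(U^\vee)\otimes\Clif(V_0^-)\bigr)\cap X\Clif(W) = 0$, so $h=g_1$, and since $h\in\Clif(V_0^-)$ one extracts $h\in\Clif(\Lambda(V_0^-))$. In other words, the correct complement to $X\Clif(W)$ is $\Clif(U^\vee)\otimes\Clif(V_0^-)$, not $\Clif(V_0^-)$ alone; once you insert the missing $\Clif(U^\vee)$ factor, your outline becomes the paper's proof.
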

\begin{proof} Clearly if $\Phi_{\Lambda(V_0^{-})}(h) \neq 0$, then $\Phi_X(1_X(1,h)) \neq 0$.  Conversely, suppose $\Phi_X(1_X(1,h)) \neq 0$.  Then there is $g_0 \in \Clif(\Lambda(W))$ so that $h-g_0 \in X\Clif(W)$.  Since $W = U^\vee \oplus V_0^{-} \oplus X$ and $\Lambda(W) = \Lambda(U^\vee) \oplus \Lambda(V_0^{-}) \oplus \Lambda(X)$, one has $\Clif(\Lambda(W)) = \Clif(\Lambda(U^\vee)) \otimes \Clif(\Lambda(V_0^{-})) \otimes \Clif(\Lambda(X))$.  Hence there is $g_1 \in \Clif(\Lambda(U^\vee)) \otimes \Clif(\Lambda(V_0^{-}))$ so that $h-g_1 \in X\Clif(W)$.  But $\left(\Clif(U^\vee) \otimes \Clif(V_0^{-})\right) \cap \left(X\Clif(W)\right) =0$ inside $\Clif(W)$.  Hence $h-g_1 = 0$, i.e. $h \in \Clif(V_0^{-}) \cap \left(\Clif(\Lambda(U^\vee)) \otimes \Clif(\Lambda(V_0^{-}))\right)$.  It follows that $h \in \Clif(\Lambda(V_0^{-}))$, i.e., that $\Phi_{\Lambda(V_0^{-})}(h) \neq 0$. \end{proof}

\begin{proof}[Proof of Proposition \ref{prop:pullback}] We sketch the proof. Set $H = \SO(V) \times \SO(V_0^{-})$.  First, one computes the double coset $P_X(F) \backslash \SO(W)(F) \slash H(F)$.  When $V_0^{-}$ is anisotropic, this double coset is a singleton, as proved in Lemma \ref{orbitLemma} below.  Furthermore, the stabilizer of the element $1\in \SO(W)(F)$ is the subgroup $P_U^\Delta \in H$, which consists of the elements $(p,h) \in \SO(V) \times \SO(V_0^{-})$ where $p \in P_U$ and $p$ acts on $V_0 = U^\perp/U$ via the element $h \in \SO(V_0)$.  Hence,
\[E_{X}((g,h),\Phi,s) = \sum_{\gamma \in P_U^\Delta(F)\backslash H(F)}{f_X(\gamma (g,h),\Phi,s)}\]
and thus
\begin{align*} \int_{[\SO(V_0^{-})]}{E_{X}((g,h),\Phi,s)\alpha(h)\,dh} &= \int_{[\SO(V_0^{-})]}{\sum_{\gamma \in P_U^\Delta(F)\backslash H(F)}{f_X(\gamma(g,h),\Phi,s)}\alpha(h)\,dh} \\ &= \sum_{\gamma \in P_U(F)\backslash \SO(V)(F)}\int_{\SO(V_0^{-})(\A)}{f_X(\gamma (g,h),\Phi,s)\alpha(h)\,dh} \\ &= \sum_{\gamma \in P_U(F)\backslash \SO(V)(F)}{f_U(\gamma g,\Phi,\alpha,s)} \\ &= E_U(g,\Phi,\alpha,s).\end{align*}
This proves the first part of the proposition. 

The second part of the proposition follows by Lemma \ref{pullbackPhi}, Theorem \ref{GJGSpin}, and the method of ``non-unique functional" integral representations.  This method is explained, for example, in \cite[Theorem 1.2]{bfg}. \end{proof}

\begin{lemma}\label{orbitLemma} Set $H = \SO(V) \times \SO(V_0^{-})$ and suppose $V_0$ is anisotropic.  Then $P_{X}(F) \backslash \SO(W)(F) \slash H(F)$ is a singleton. \end{lemma}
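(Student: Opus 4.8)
The plan is to reinterpret the double-coset statement geometrically. Since $P_X$ is precisely the stabilizer in $\SO(W)$ of the maximal isotropic subspace $X$, and $\dim W=2(\dim U+\dim V_0)$ with $\dim X=\dim U+\dim V_0$, the quotient $P_X(F)\backslash\SO(W)(F)$ is identified, via $g\mapsto X\cdot g$, with the $\SO(W)(F)$-orbit of $X$ in the Grassmannian of maximal isotropic subspaces of $W$. As $W$ is split, this orbit sits inside one of the two ``families'' of maximal isotropics, and $O(W)(F)\setminus\SO(W)(F)$ interchanges the two families. So the assertion to prove is: $H(F)=\SO(V)(F)\times\SO(V_0^{-})(F)$ acts transitively on the family of maximal isotropics of $W$ containing $X$.

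The geometric core I would establish first is that $O(V)(F)\times O(V_0^{-})(F)$ acts transitively on \emph{all} maximal isotropics of $W$. Let $Y$ be one. Since $V_0$ is anisotropic, $V$ has Witt index $\dim U$, so $\dim(Y\cap V)\le\dim U$; and since $V$ is nondegenerate with $V^{\perp}=V_0^{-}$ inside $W$, a dimension count gives $\dim(Y\cap V)\ge\dim Y+\dim V-\dim W=\dim U$. Hence $Y\cap V$ is a maximal isotropic of $V$, and by Witt's extension theorem some $a_0\in O(V)(F)$ moves it onto $U$; after acting by $(a_0,1)$ we may assume $U\subseteq Y$. Then $Y/U$ is a maximal isotropic of $U^{\perp}_W/U$, which is canonically the split quadratic space $V_0\oplus V_0^{-}$ (the doubling of $V_0$). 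Anisotropy of $V_0$ forces $(Y/U)\cap V_0=(Y/U)\cap V_0^{-}=0$, so $Y/U$ is the graph of a linear automorphism $\sigma$ of the common underlying space of $V_0$ and $V_0^{-}$; isotropy of $Y/U$ forces $\sigma\in O(V_0)(F)$. Acting by $(\sigma,1)\in O(V_0)(F)\times O(V_0^{-})(F)$ — with $\sigma$ extended to $V$ by the identity on $U^{\vee}\oplus U$ — carries $Y/U$ to $\Delta(V_0)$, and hence $Y$ to $\Delta(V_0)\oplus U=X$.

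To conclude, take $Y$ in the $\SO(W)(F)$-family of $X$, and pick $(a,b)\in O(V)(F)\times O(V_0^{-})(F)$ with $X\cdot(a,b)=Y$ from the previous paragraph. Regarded in $O(W)(F)$, the element $(a,b)$ carries $X$ to $Y$, which lies in the same $\SO(W)(F)$-orbit; since an element of $O(W)(F)\setminus\SO(W)(F)$ would swap the two families, $(a,b)\in\SO(W)(F)$, i.e. $\det(a)\det(b)=1$. If $\det a=\det b=1$ then already $(a,b)\in H(F)$. If $\det a=\det b=-1$, choose an anisotropic $v_0\in V_0$ (possible since $V_0$ is anisotropic and nonzero), let $\tau\in O(V_0)(F)$ be the reflection in $v_0$, put $c\in O(V)(F)$ equal to $\tau$ on $V_0$ and the identity on $U^{\vee}\oplus U$, and $d=\tau\in O(V_0^{-})(F)$; then $(c,d)$ stabilizes $X=\Delta(V_0)\oplus U$ and has $\det c=\det d=-1$, so $(c,d)(a,b)\in\SO(V)(F)\times\SO(V_0^{-})(F)=H(F)$ and $X\cdot\bigl((c,d)(a,b)\bigr)=Y$.

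I expect the delicate part to be the bookkeeping with the two families and the determinants: one uses that $O(W)(F)\setminus\SO(W)(F)$ swaps the two families (to force $\det a=\det b$), and one needs $V_0$ anisotropic and nonzero to supply the reflection $\tau$ that fixes $U$ while contributing the missing sign. Everything else is Witt's theorem together with the two consequences of anisotropy of $V_0$: that $Y\cap V$ is maximal isotropic in $V$, and that $Y/U$ is the graph of an isometry. One can alternatively run the whole argument inside $H(F)$ from the start, but then one must track families at each stage — the family of a maximal isotropic containing $U$ is recorded by the parity of $\tfrac12\dim(U^{\perp}_W/U)-\dim\bigl((Y/U)\cap\Delta(V_0)\bigr)$ — which is no simpler. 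The degenerate case $V_0=0$, where $W=V$, $H=\SO(W)$, and the double coset is trivially a point, is excluded or handled separately.
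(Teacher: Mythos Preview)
Your proof is correct and follows essentially the same route as the paper's: identify $P_X\backslash\SO(W)$ with one family of maximal isotropic subspaces, use anisotropy of $V_0$ to force $\dim(Y\cap V)=\dim U$, move $Y\cap V$ to $U$ by Witt, and recognize $Y/U\subset V_0\oplus V_0^{-}$ as the graph of an isometry of $V_0$. The only organizational difference is in the determinant bookkeeping: the paper acts by $(1,\beta)$ on the $V_0^{-}$ factor and argues directly that the parity condition $\dim(X/(X\cap Y))$ even forces $\det\beta=1$, whereas you act on the $V$ factor, first work in $O(V)\times O(V_0^{-})$, and then correct by a diagonal reflection $(\tau,\tau)$ stabilizing $X$ if needed.
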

\begin{proof} The map $P_{X}(F)\gamma \mapsto X\gamma$ defines a bijection between the coset space $P_{X}(F)\backslash \SO(W)(F)$ and the maximal isotropic subspaces $Y$ of $W$ for which the dimension of $X/(X \cap Y)$ is even \cite{conrad}.  (The maximal isotropic subspaces $Y$ for which $\dim_F(X/(X \cap Y))$ is odd are $\mathrm{O}(W)(F)$-translates of $X$, but not $\SO(W)(F)$-translates of $X$.)  We thus want to understand the $\SO(V) \times \SO(V_0^{-})$ action on such maximal isotropic subspaces of $W$.  Suppose $Y$ is such a maximal isotropic subspace of $W$.  We will show that there is $\gamma \in H(F)$ moving $Y$ to $X$.  

The key idea is this: Set 
\[pr_V(Y) = \{v \in V: \exists v' \in V_0^{-} \text{ s.t. } (v,v') \in Y\}\]
and similarly define $pr_{V_0^{-}}(Y)$.  Then $Y$ defines a map
\[\alpha_Y: pr_V(Y)/(Y \cap V) \rightarrow pr_{V_0^{-}}(Y)/(Y \cap V_0^{-}).\]
If $v \in pr_V(Y)$ then $\alpha_Y(v)$ is the image in $pr_{V_0^{-}}(Y)/(Y \cap V_0^{-})$ of any $v_0$ for which $(v,v_0)$ is in $Y$.  The fact that $Y$ is isotropic implies $\alpha_Y$ is an isometry.  One also sees easily that $\alpha_Y$ is a linear isomorphism.

Note one also has the exact sequence
\[0 \rightarrow Y \cap V \rightarrow Y \rightarrow pr_{V_0^{-}}(Y)\rightarrow 0.\]
Suppose $\dim Y = \dim(V_0) + \delta$. Then
\[ \dim(V_0) + \delta - \dim(Y \cap V) = \dim pr_{V_0^{-}}(Y) \leq \dim V_0\]
and hence $\dim(Y \cap V) \geq \delta$.  So when $V_0$ anisotropic, we get $\dim(Y \cap V) = \delta$ since $\delta$ is the Witt index of $V$.

Thus, one can move $Y \cap V$ to $U$ via an element of $\SO(V) \times 1 \subseteq H(F)$.  Assume now that $Y \cap V = U$.  Since $\alpha_{Y}$ is an isometry from $V_0$ to $V_0$, it defines an element $\beta \in \mathrm{O}(V_0)$.  Set $\tilde{\beta} = 1 \times \beta$ in $\mathrm{O}(W)$.  Then one obtains that $Y = X \cdot \tilde{\beta}$.  Since by assumption $\dim(X/(X \cap Y))$ is even and not odd, $\tilde{\beta}$ and thus $\beta$ has determinant $1$. Hence one can move the isotropic space $Y$ to $X$ via an element of $H(F)$.  This finishes the proof.\end{proof}

\section{Global integrals}\label{section:globInt} In this section we show how a few local computations that arise in Rankin-Selberg integrals for the standard $L$-function on $\SO(V)$ can be quickly dispensed with by using Theorem \ref{GJGSpin} (proved in section \ref{section:Lfcn}) and the pullback formula (reviewed in section \ref{section:pullback}.)

\subsection{The doubling integral} In this subsection, we reconsider the doubling integral of Piatetski-Shapiro and Rallis \cite{gpsrDoubling}.  Here the situation is as follows.  One has a quadratic space $V$, and a cuspidal representation $\pi$ on $\SO(V)$.  Denote $V^{-}$ the negative of the space of $V$.  That is, $V^{-} = V$ as a vector space, but the quadratic form on $V^{-}$ is the negative of that of $V$.  Now set $W = V \oplus V^{-}$ so that $X = \{(v,v)\}$ is a maximal isotropic subspace of $W$.  Denote by $P_{X}$ the (Siegel) parabolic subgroup of $\SO(W)$ stabilizing $X$.  Suppose $F$ is a nonarchimedean local field for which $V$ is quasisplit.  Then the local integral that must be computed is
\begin{equation}\label{locIntDub}\int_{\SO(V)(F)}{f_X((g,1),s)\langle v_0^\vee, \pi(g) v_0\rangle \,dg},\end{equation}
where $f_X$ is a $K$-spherical section for the induction from $P_{X}$ to $\SO(W)$, and $v_0, v_0^\vee$ are $K$-spherical vectors in $\pi$ and $\pi^\vee$, respectively.

As explained in section \ref{section:pullback}, one can explicitly construct a section $f_{X}((g,1),s)$ using a Schwartz-Bruhat function $\Phi_{X}$ on $S_{X}:=(X\Clif(W))\backslash \Clif(W)$.  In the notation of section \ref{section:pullback}, $U = 0$ and $V_0 = V$.  Using this section, the local integral (\ref{locIntDub}) is
\begin{equation}\label{locIntDub2}\int_{\GSpin(V)(F)}{\Phi_{X}(1_{X}(g,1))|\nu(g)|^{s}\langle v_0^\vee, \pi(g) v_0\rangle \,dg}.\end{equation}
Applying Lemma \ref{pullbackPhi}, it follows that when $\Phi_{X}$ is the characteristic function of the image of $\Clif(\Lambda(W))$ in $S_X(W)$, (\ref{locIntDub2}) is equal to
\[\int_{\GSpin(V)(F)}{|\nu(g)|^{s} \Phi(g)\langle v_0^\vee, \pi(g) v_0\rangle \,dg}.\]
By Theorem \ref{GJGSpin}, this is $\frac{L(\pi,Std,s+1-\dim(V)/2)}{d^V(s)}$, as desired.

\subsection{The period integral} In this subsection we discuss what can be called the ``period" integral for cusp forms on $\SO(V)$, because this global Rankin-Selberg convolution unfolds to a period of the cusp form in question.  The integral uses a cusp form on $\SO(V)$, and a certain Eisenstein series on $\SO(V')$, where $\dim(V') = \dim(V) +1$.  This integral goes back to Murase-Sugano \cite{muraseSugano}, and was vastly generalized by the book \cite{gpsr}.  As will be clear in a moment, this integral is closely connected to the doubling integral above\footnote{That this period integral yields the standard $L$-function of cuspidal representations $\pi$ on $\SO(V)$ essentially follows directly from the doubling integral and the pullback formula.}.  

We now give the details. Let $V$ be an orthogonal space, and choose an $F$-line $T$ in $V$ with $q(T) \neq 0$.  Then $V = V_0 \oplus T$ where $V_0 = T^\perp$.  For simplicity, we assume $V_0$ is anisotropic.  Set $V' = V \oplus T^{-} = (T \oplus T^{-}) \oplus V_0$.  Choose an isotropic basis $e,f$ for $T \oplus T^{-}$, so that $(e,f) =1$.  Then $V' = Fe \oplus V_0 \oplus Ff$.  The Eisenstein series used on $V'$ is associated to the parabolic $P_{f}$ that stabilizes the line $Ff$, and has data on the $\SO(V_0)$ part of the Levi.

More precisely, this Eisenstein series on $\SO(V')$ may be constructed via the pullback formula.  Setting $U=Ff$, then in the notation of section \ref{section:pullback}, the Eisenstein series on $V'$ is $E_U(g,\Phi,\alpha,s)$ for $\alpha$ a cusp form on $\SO(V_0)$ and $\Phi_{X}$ a Schwartz-Bruhat function on $S_{X}= (X\Clif(W))\backslash \Clif(W)$.  Suppose $\pi$ is a cuspidal representation of $\SO(V)$ and $\phi$ is a cusp form in the space of $\pi$.  The global integral is
\begin{align*} I(\phi,\alpha,s) &= \int_{\SO(V)(F) \backslash \SO(V)(\A)}{\phi(g) E_{U}(g,\alpha,\Phi_{X},s)\,dg} \\ &= \int_{[\SO(V) \times \SO(V_0^{-})]}{\phi(g)E_{X}((g,h),\Phi_{X},s)\alpha(h)\,dh\,dg}.\end{align*}
(Again, see section \ref{section:pullback} for the notation.)

We now unfold the integral.  Define $\SO(V_0)^\Delta \simeq \SO(V_0)$ to be the elements $(g,h) \in \SO(V) \times \SO(V_0^{-})$ for which $g$ fixes $T$ and acts on $V_0$ by $h$.  One obtains
\begin{align*} I(\phi,\alpha,s) &= \int_{[\SO(V) \times \SO(V_0^{-})]}{\phi(g)E_{X}((g,h),\Phi_{X},s)\alpha(h)\,dh\,dg} \\ &= \int_{\SO(V_0)^{\Delta}(F)\backslash (\SO(V) \times \SO(V_0^{-}))(\A)}{\phi(g)f_{X}((g,h),\Phi_{X},s)\alpha(h)\,dh\,dg} \\ &= \int_{\SO(V_0)^{\Delta}(F)\backslash (\SO(V) \times \SO(V_0^{-}))(\A)}{\phi(hg)f_{X}((g,1),\Phi_{X},s)\alpha(h)\,dh\,dg} \\ &= \int_{\SO(V)(\A)}{\phi_{\alpha}(g)f_{X}((g,1),\Phi_{X},s)\,dg}.\end{align*}
Here
\[\phi_{\alpha}(g) = \int_{\SO(V_0)(F) \backslash \SO(V_0)(\A)}{\alpha(h)\phi(hg)\,dh}\]
is the period integral of the cusp form $\phi$ mentioned above.  

Using the definition of $f_X(g,\Phi_{X},s)$, one obtains
\[I(\phi,\alpha,s) = \int_{\GSpin(V)(\A)}{|\nu(g)|^{s}\phi_{\alpha}(g) \Phi_{X}(1_{X}(g,1))\,dg}.\]
But $\Phi_{X}(1_{X}(g,1)) = \Phi(g)$ for some Schwartz-Bruhat function $\Phi$ on $\Clif(V)$.  Hence
\[I(\phi,\alpha,s) = \int_{\GSpin(V)(\A)}{|\nu(g)|^{s}\phi_{\alpha}(g) \Phi(g)\,dg}\]
which by Theorem \ref{GJGSpin} yields the $L$-factor $\frac{L(\pi_p,Std,s+1-\dim(V)/2)}{d^V(s)}$ almost everywhere.  Thus the global integral $I(\phi,\alpha,s)$ yields the partial Standard $L$-function of $\phi$. 

\subsection{The Bessel integral} In this subsection we consider what may be called a Bessel integral for cusp forms $\phi$ on $\SO(V)$, because the global Rankin-Selberg convolution unfolds to a Bessel functional of the cusp form in question.  This integral goes back to Sugano \cite{sugano}.  One uses an Eisenstein series on an orthogonal group $\SO(V')$, where $\dim(V') = \dim(V) -1$.

We now give the details.  First, take a quadratic space $V_0$, and an anisotropic vector $T$ in $V_0$. Write $V_1$ for the perpendicular space to $T$ inside $V_0$; we assume $V_1$ is anisotropic.  Set $V = Fe \oplus V_0 \oplus Ff$ and $V' = Fe \oplus V_1 \oplus Fe$, so that $V' \subseteq V$ as $T^\perp$.  Here, as usual, $e,f$ are isotropic vectors with $(e,f) = 1$ and both $e,f$ are perpindicular to $V_0$.  The integral involves an Eisenstein series on $V'$, for the parabolic that stabilizes the line $Ff$, with automorphic data on the Levi $\GL_1 \times \SO(V_1)$.

This Eisenstein series on $\SO(V')$ may be constructed from the pull-back formula.  More precisely, set $W = V' \oplus V_1^{-}$, and let $\alpha$ be cusp form on $\SO(V_1)$. As in section \ref{section:pullback}, let $X \subseteq W$ consist of the elements $\{((x+\lambda f,x): x \in V_1, \lambda \in F\}$, so that $X$ is a maximal isotropic subspace of $W$.  Denote by $\Phi_{X}$ a Schwartz-Bruhat function on $S_{X} = (X\Clif(W))\backslash \Clif(W)$.  The Eisenstein series on $\SO(V')$ is then
\[E_{U}(g,\Phi_{X},\alpha,s) = \int_{[\SO(V_1^{-})]}{E_X((g,h),\Phi_X,s)\alpha(h)\,dh}\]
where here $U = Ff$ and one uses the inclusion $\SO(V') \times \SO(V_1^{-}) \subseteq \SO(W)$.

Suppose $\pi$ is a cuspidal representation of $\SO(V)$, and $\phi$ a cusp form in the space of $\pi$.  The global integral is then
\[I(\phi,s,\alpha) = \int_{[\SO(V')]}{\phi(g)E_{U}(g,\Phi_{X},\alpha,s)\,dg}.\]

Denote by $P'=M'N'$ the parabolic subgroup of $\SO(V')$ or $\GSpin(V')$ that stabilizes the line $Ff$ inside $V'$, and $P=MN$ the parabolic that stabilizes the line $Ff$ inside $\SO(V)$ or $\GSpin(V)$.  The Levi $M'$ is defined to be the one that fixes the decomposition $V' = Fe \oplus V_1 \oplus Ff$ and similarly for $M$.  Recall the maps $n: V \simeq N$ and $m: \GL_1 \times \GSpin(V') \simeq M'$ from subsection \ref{subsec:parabolic}. Define
\[\phi_T(g) = \int_{N(F) \backslash N(\A)}{\psi^{-1}((T,x))\phi(n(x)g)\,dx}\]
and
\[\phi_{T,\alpha}(g) = \int_{\SO(V_1)(F) \backslash \SO(V_1)(\A)}{\alpha(h)\phi_{T}(hg)\,dh}.\]
A straightforward unfolding gives that
\begin{align*} I(\phi,\alpha,s) &= \int_{N'(\A)\backslash \SO(V')(\A)}{\phi_{T,\alpha}(g)f_X((g,1),\Phi_{X},s)\,dg} \\&= \int_{N'(\A)\backslash \GSpin(V')(\A)}{\phi_{T,\alpha}(g)|\nu(g)|^{s}\Phi_{X}(1_{X}(g,1))\,dg}.\end{align*}

Suppose now $F$ is a nonarchimedean local field for which $V$ is quasisplit, and $\pi = \pi_{p}$ is a spherical representation of $\SO(V)(F)$ with space $V_{\pi}$.  Denote by $L: V_{\pi} \rightarrow \C$ a linear functional satisfying $L(n(x) \cdot v) = \psi((T,x))L(v)$ for all $v \in V_{\pi}$, and let $v_0$ be a spherical vector in $V_{\pi}$.  To relate $I(\phi,\alpha,s)$ to the partial $L$-function $L^{S}(\pi,Std,s)$, it suffices to compute the local integrals
\[I_p(L,v_0,s) = \int_{M'(F)}{\delta_{P'}^{-1}(m) |\nu(m)|^{s} \Phi_{X}(1_{X}(m,1))L( m \cdot v_0)\,dm}.\]

Now $M' = \GL_1 \times \GSpin(V_1)$, and we parametrize the elements of $M'$ by $m(\lambda, y)$ as in subsection \ref{subsec:parabolic}.  In order for $L(m(\lambda,y) \cdot v_0)$ to be nonvanishing, one finds $\lambda \in \mathcal{O}_F$.  Applying Lemma \ref{pullbackPhi}, one finds that almost everywhere, the condition $\Phi_X(1_{X}(m,1)) \neq 0$ becomes $y \in \Clif(\Lambda')$, where $\Lambda' \subseteq V'$ is the $\varpi$-adic completion of a fixed global lattice.  Finally, $\delta_{P'}(m(\lambda,y)) = |\lambda|^{\dim V_1}$.  Hence almost everywhere, one must compute the integrals
\[I_p(L,v_0,s) = \int_{\GL_1(F) \times\GSpin(V_1)}{|\lambda|^{-\dim(V_1)}|\lambda \nu(y)|^{s} \charf(\lambda, y) L( m(\lambda,y) \cdot v_0)\,d\lambda\,dy}.\]

In order to understand $I_p(L,v_0,s)$ in terms of $L(\pi,Std,s)$, we will relate the integral $I_p$ to an integral over $M = \GL_1 \times \GSpin(V_0) \subseteq \GSpin(V)$.  To do this, we use the following lemma.
\begin{lemma}\label{lem:redToKS} Assume $\dim V \geq 3$, and suppose $v \in \Lambda_{V_0}$ and $\varpi \nmid 2q(v)$.  Then if $v' \in \Lambda_{V_0}$ and $q(v) = q(v')$, there is an element of $K(\Lambda_{V_0}) = \SO(\Lambda_{V_0})$ that moves $v$ to $v'$. \end{lemma}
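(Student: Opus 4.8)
The plan is to exhibit the required isometry explicitly as a short product of reflections, using the hypothesis $\varpi \nmid 2q(v)$ to stay integral. Note first that $\varpi \nmid 2q(v)$ forces the residue characteristic of $F$ to be odd and $q(v) = q(v')$ to be a unit, so $(v,v) = 2q(v) \in \mathcal{O}_F^\times$ and likewise $(v',v') \in \mathcal{O}_F^\times$. A preliminary reduction, needed below to control determinants, is that a unit-norm vector splits off a rank-one orthogonal summand of the maximal lattice: for $x \in \Lambda_{V_0}$ one has $(x,v) \in \mathcal{O}_F$ by integrality of $\Lambda_{V_0}$, so $x - (x,v)(v,v)^{-1} v$ lies in $\Lambda_{V_0} \cap v^\perp$, whence $\Lambda_{V_0} = \mathcal{O}_F v \perp L$ with $L = \Lambda_{V_0} \cap v^\perp$, and likewise $\Lambda_{V_0} = \mathcal{O}_F v' \perp L'$ with $L' = \Lambda_{V_0} \cap (v')^\perp$; moreover $L$ and $L'$ are again maximal lattices in their spaces.

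For the main step, use that the residue characteristic is odd to get $q(v - v') + q(v + v') = 2q(v) + 2q(v') = 4q(v) \in \mathcal{O}_F^\times$, so one of $v - v'$, $v + v'$ has unit norm; call it $w$. Since $q(w) \in \mathcal{O}_F^\times$, the reflection $s_w$, given by $s_w(x) = x - (x,w)q(w)^{-1}w$, preserves $\Lambda_{V_0}$, and a direct computation gives $v \cdot s_w = v'$ when $w = v - v'$ and $v \cdot s_w = -v'$ when $w = v + v'$. If $w = v + v'$, then $s_{v'}$ is also an integral isometry (as $q(v') \in \mathcal{O}_F^\times$), $v \cdot s_w s_{v'} = v'$, and $\det(s_w s_{v'}) = 1$, so we are done. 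If $w = v - v'$, then $s_w$ already carries $v$ to $v'$ but has determinant $-1$, so it remains to compose with an integral improper isometry fixing $v'$. Here we use $\dim V_0 \geq 2$, so that the maximal lattice $L' = \Lambda_{V_0} \cap (v')^\perp$ is nonzero: from the structure of maximal lattices in odd residue characteristic (the Jordan decomposition involves only a unimodular and a $\varpi$-modular piece) one selects $u \in L'$ for which $s_u$ is integral — with $q(u)$ a unit if the unimodular Jordan component of $L'$ is nonzero, and with $q(u)$ of valuation exactly $1$ otherwise, in which case $s_u$ still preserves the $\varpi$-modular lattice $L'$. Then $s_u$ fixes $v'$, has determinant $-1$, and $s_w s_u$ lies in $\SO(\Lambda_{V_0})$ and carries $v$ to $v'$.

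The main obstacle here is the $\SO$-versus-$\mathrm{O}(\Lambda_{V_0})$ bookkeeping rather than transitivity itself: finding an element of $\mathrm{O}(\Lambda_{V_0})$ moving $v$ to $v'$ is just an integral incarnation of Witt's theorem, but landing inside $\SO(\Lambda_{V_0})$ requires knowing that there is always an integral improper isometry fixing $v'$, which is precisely where the (mild) structure theory of maximal $p$-adic lattices in odd residue characteristic enters. An alternative route bypasses the explicit reflections: after splitting off $v$ and $v'$ as above, invoke Witt cancellation for quadratic $\mathcal{O}_F$-lattices — valid since the cancelled summand $\mathcal{O}_F v \cong \mathcal{O}_F v'$ is unimodular — to conclude $L \cong L'$, glue the two isometries, and then fix the determinant by the same reflection argument. (The determinant correction genuinely requires $\dim V_0 \geq 2$, since $\SO(\Lambda_{V_0})$ is trivial when $\dim V_0 = 1$.)
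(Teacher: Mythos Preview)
Your argument is correct and is precisely what the paper has in mind: its entire proof is the single sentence ``One simply applies (the proof of) Witt's theorem,'' and your explicit reflection construction (choosing $w\in\{v-v',v+v'\}$ of unit norm, then correcting the determinant via a reflection in $(v')^\perp$) is exactly that proof carried out integrally. Your observation that the determinant fix needs $\dim V_0\geq 2$ is a point the paper's terse reference elides.
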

\begin{proof} One simply applies (the proof of) Witt's theorem. \end{proof}
It follows from the lemma that almost everywhere, one has an equality of sets
\begin{equation}\label{eqSets}\{ y \in \GSpin(V_0): y \in \Clif(\Lambda_{V_0}), T \cdot y \in \Lambda_{V_0}\} = \{y \in \GSpin(V_1): y \in \Clif(\Lambda_{V_1})\}K(\Lambda_{V_0}).\end{equation}
(One applies Lemma \ref{lem:redToKS} to the elements $T \cdot y$ and $T$.) Finally, note that
\[\delta_P(m(\lambda,y))^{-1}|\nu(y)|^{-1}|\lambda \nu(y)|^{s+1} = |\lambda|^{-\dim(V_1)}|\lambda\nu(y)|^{s}.\]
Hence (almost everywhere) $I_p(L,v_0,s)$ is equal to
\begin{equation}\label{IpBessel}\int_{\GL_1 \times \GSpin(V_0)}{\charf(\lambda, y, T \cdot y)|\nu(y)|^{-1}\delta_{P}^{-1}(m(\lambda,y))|\lambda \nu(y)|^{s+1} L(m(\lambda,y) \cdot v_0)\,d\lambda\,dy}.\end{equation}
In the next subsection, we will relate this last integral to $L(\pi,Std,s)$.

\subsection{The Kohnen-Skoruppa integral} In this subsection we describe a generalization of the integral \cite{ks} (see also \cite{pollackShahKS}) of Kohnen-Skoruppa.  The integral in \cite{ks} and \cite{pollackShahKS} is on $\GSp_4 = \GSpin_5$.  Gritsenko in \cite{gritsenko} made an analogous construction on $\GU(2,2)$, which is essentially quasisplit $\GSpin_{6}$.  We give an integral on $\GSpin(V)$ with $\dim(V) \geq 5$ that extends these constructions\footnote{Just as the period and doubling integrals are closely connected, so too are the Bessel and Kohnen-Skoruppa integrals.}.  When $\dim(V) = 5$, the proof we give essentially reduces to that in \cite{pollackShahKS}, with the caveat that below we make simplifying assumptions that are not made\footnote{Namely, we make the assumption $q(T) \notin q_0(V_0)$ explained below, and treat $p=2$ as a ``bad'' prime.} in \cite{pollackShahKS}.  The reader looking to understand a more complete theory of this integral when $\dim(V) = 5$, including Archimedean calculations, should see \cite{pollackShahKS}.

We now describe the global Rankin-Selberg convolution.  Suppose $(V_0,q_0)$ is an anisotropic quadratic space over the number field $F$, and $U$ is $2$-dimensional $F$ vector space.  We set $V = U^\vee \oplus V_0 \oplus U$ with the quadratic form $q$ defined by $q(\alpha, v_0,\delta) = \alpha(\delta) + q_0(v_0)$.  Fix a basis $e_1, e_2$ of $U^\vee$ and the dual basis $f_1, f_2$ of $U$.  The assumption that $V_0$ is anisotropic is made for simplicity.  The global integral will involve three functions on $\SO(V)$: a cusp form, an Eisenstein series for the parabolic $P_{U}$ stabilizing $U$, and a special function $P_{T}^{\beta}$.

The Eisenstein series used is constructed via the pullback formula.  In the notation of section \ref{section:pullback}, it is $E_U(g,\Phi_{X},\alpha,s)$ for a cusp form $\alpha$ on $\SO(V_0)$.  Set $V_1 = Fe_2 \oplus V_0 \oplus Ff_2$.  To define the special function, choose an auxiliary nonzero vector $T$ in $V_1$, and for simplicity assume that $q(T)$ is not in $q_0(V_0) := \{q(v_0): v_0 \in V_0\}$; in particular, $T$ is not isotropic.  Furthermore, choose a Schwartz-Bruhat function $\beta$ on $V(\A)$.  Then
\[P_{T}^\beta(g) := \sum_{\gamma \in \Stab(T)(F) \backslash \SO(V)(F)}{\beta(T \gamma g)}\]
where $\Stab(T)$ denotes the stabilizer of $T$ in $\SO(V)$.  Suppose $\pi$ is a cuspidal representation on $\SO(V)$ and $\phi$ is a cusp form in the space of $\pi$.  The global integral is
\[I(\phi,\Phi_{X},\alpha,s) = \int_{[\SO(V)]}{\phi(g)E_{U}(g,\Phi_{X},\alpha,s)P_{T}^\beta(g)\,dg}.\]
The integral does depend on the data $T,\beta$, but we drop these from the notation.  By the pullback formula, the global integral is also
\begin{equation}\label{intDef}I(\phi,\Phi_{X},\alpha, s) = \int_{[\SO(V)]\times [\SO(V_0^{-})]}{P_T^{\beta}(g)\phi(g_1)E_{X}((g_1,g_2),\Phi_{X},s)\alpha(g_2)\,dg_1\,dg_2}.\end{equation}

\subsubsection{Coset decompositions} To unfold the integral $I(\phi,\alpha,\Phi_{X},s)$, we prepare by making some coset decompositions. Set $H = \SO(V) \times \SO(V_0^{-})$. It was proved in Lemma \ref{orbitLemma} that $P_{X}(F) \backslash \SO(W)(F) \slash H(F)$ is a singleton.  This enables one to easily unfold the Eisenstein series $E_{X}(g,\Phi_{X},\alpha,s)$.  To unfold the function $P_{T}^\beta(g)$, one requires an additional orbit calculation, which we now give.

\begin{lemma}\label{PTorbit} Suppose $T \in V_1$ is such that $q(T)$ is not in $q(V_0)$.  One has the following.
\begin{enumerate}
\item \label{PT:item1} The subspace of $V_1$ orthogonal to $T$ is anisotropic.
\item \label{PT:item2} The stabilizer $\mathrm{Stab}(T)$ acts transitively on the two-dimensional isotropic spaces in $V = F \oplus V_1 \oplus F$.\end{enumerate}
 \end{lemma}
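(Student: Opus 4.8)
The plan is to establish part~\ref{PT:item1} first, by a Witt-index argument, and then to feed it into part~\ref{PT:item2}. For part~\ref{PT:item1}, note that $q(T)\notin q_0(V_0)$ forces $q(T)\neq 0$ (since $0\in q_0(V_0)$), so $T$ is anisotropic and $V_1=FT\perp(T^\perp\cap V_1)$ with $T^\perp\cap V_1$ nondegenerate. As $V_0$ is anisotropic, $V_1=(Fe_2\oplus Ff_2)\perp V_0$ is a Witt decomposition, so the Witt index of $V_1$ equals $1$. If $T^\perp\cap V_1$ contained a nonzero isotropic vector it would contain a hyperbolic plane $H'$, and writing $T^\perp\cap V_1=H'\perp R$ we would get $V_1=H'\perp(FT\perp R)$; since the Witt index of $V_1$ is exactly $1$, the summand $FT\perp R$ is anisotropic, so this is another Witt decomposition and, by uniqueness of the anisotropic kernel, $FT\perp R\cong V_0$. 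The image of $T$ under such an isometry would be a vector of $V_0$ of norm $q(T)$, contradicting $q(T)\notin q_0(V_0)$. Hence $T^\perp\cap V_1$ is anisotropic.

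For part~\ref{PT:item2}, since $T$ is anisotropic we have $V=FT\perp T^\perp$ and $\Stab(T)=\SO(T^\perp)$, where an isometry of $T^\perp$ is extended by the identity on $FT$. By part~\ref{PT:item1}, $T^\perp=Fe_1\oplus A\oplus Ff_1$ with $A:=T^\perp\cap V_1$ anisotropic and $Fe_1\oplus Ff_1$ a hyperbolic plane, so $T^\perp$ has Witt index $1$ and $\dim T^\perp=\dim V_0+3\geq 4$ (recall $\dim V\geq 5$). Let $Y$ be a $2$-dimensional isotropic subspace of $V$. A dimension count gives $\dim(Y\cap T^\perp)\geq 1$, and $\dim(Y\cap T^\perp)=2$ is impossible because a space of Witt index $1$ has no $2$-dimensional isotropic subspace; so $Y\cap T^\perp$ is an isotropic line. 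Since $\SO(T^\perp)$ acts transitively on the isotropic lines of $T^\perp$ — $\mathrm{O}(T^\perp)$ does by Witt's theorem, and one corrects the determinant using the reflection in a suitable anisotropic vector of $\ell^\perp$, available because $\dim T^\perp\geq 3$ — after applying an element of $\Stab(T)$ we may assume $Y\cap T^\perp=Ff_1$.

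Next, choose $m\in Y\setminus T^\perp$ and, after rescaling, write $m=T+m'$ with $m'\in T^\perp$. Then $q(m)=0$ gives $q(m')=-q(T)$, and $f_1\perp m$ (both lie in the isotropic space $Y$, and $f_1\perp T$) gives $m'\in f_1^\perp\cap T^\perp=A\oplus Ff_1$. Subtracting the $Ff_1$-component of $m'$ from $m$ leaves $Y$ unchanged, so we may take $m'=a\in A$ with $q(a)=-q(T)$, i.e.\ $Y=Ff_1\oplus F(T+a)$. Finally, $\SO(A)$, embedded in $\Stab(T)=\SO(T^\perp)$ so as to act trivially on $FT\oplus Fe_1\oplus Ff_1$, acts transitively on the set of vectors of $A$ of norm $-q(T)$ — again Witt's theorem together with a determinant correction, valid since $\dim A=\dim V_0+1\geq 2$ — and moves $Y$ to a fixed model $Ff_1\oplus F(T+a_0)$, which exists precisely because $Y$ did. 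Thus any two $2$-dimensional isotropic subspaces of $V$ lie in a single $\Stab(T)$-orbit.

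The step carrying the real content is part~\ref{PT:item1}: it is the only place the hypothesis $q(T)\notin q_0(V_0)$ is used, and the clean route to the contradiction is the uniqueness of the anisotropic kernel combined with the fact that the Witt index of $V_1$ is exactly $1$. Part~\ref{PT:item2} is then a routine succession of applications of Witt's theorem, the only recurring nuisance being the determinant corrections needed to pass from $\mathrm{O}$ to $\SO$ at each stage.
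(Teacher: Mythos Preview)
Your proof is correct and, for part~\ref{PT:item2}, follows essentially the same line as the paper: intersect the isotropic plane with $T^\perp$ to get a line, move that line to $Ff_1$, then normalize the remaining generator to $T+a$ with $a$ in the anisotropic part and conclude by transitivity on a norm level set. You spell out the determinant corrections needed to pass from $\mathrm{O}$ to $\SO$, which the paper leaves implicit.

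For part~\ref{PT:item1} the routes genuinely differ. The paper argues in coordinates: writing $T=(\alpha,h,\delta)$ with $\alpha\delta\neq 0$, it takes a hypothetical isotropic $v'\in T^\perp\cap V_1$, observes that a suitable translate $T+\lambda v'$ kills the $e_2$-coordinate, and then $q(T)=q(T+\lambda v')\in q_0(V_0)$ gives the contradiction directly. Your argument is structural: you invoke the Witt decomposition of $V_1$ and the uniqueness of the anisotropic kernel to force $FT\perp R\cong V_0$, whence $q(T)\in q_0(V_0)$. Both are short; the paper's computation is self-contained and uses nothing beyond the explicit form of $V_1$, while yours packages the same content into Witt cancellation and has the advantage of making transparent exactly which abstract property of $q(T)$ is being used (namely that $q(T)$ is not represented by the anisotropic kernel of $V_1$).
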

\begin{proof} We first prove part (\ref{PT:item1}).  We have $V_1 = F \oplus V_0 \oplus F$.  In coordinates, write $T = (\alpha,h,\delta)$, i.e., $T = \alpha e_2 + h + \delta f_2$ with $h \in V_0$.  Since $q(T) \notin q(V_0)$, $\alpha$ and $\delta$ are nonzero.  Suppose $v' = (a',b',c') \in V_1$ is isotropic and nonzero.  Since $V_0$ is anisotropic, $a'c' \neq 0$.  It follows that there exists $\lambda \in F^\times$ so that $T + \lambda v' = (0,*,*)$.  Now suppose $(T,v') = 0$.  Then $q(T) = q(T+\lambda v') = q((0,*,*)) \in q(V_0)$, contradicting the assumption on $T$.  Thus the subspace of $V_1$ orthogonal to $T$ is indeed anisotropic.

Now consider part (\ref{PT:item2}).  Suppose $u_1, u_2$ span an isotropic two-dimensional space in $V$.  By part (\ref{PT:item1}), one checks that we cannot have both $u_i$ orthogonal to $T$.  Hence we may assume $(u_2,T) \neq 0$, and then by linearity that $(u_1,T) = 0$.  Since $(u_1,T) = 0$, one may move $u_1$ to $f_1$ by $\mathrm{Stab}(T)$. (This is because $\mathrm{Stab}(T)$ acts transitively on the isotropic lines in $T^\perp \subseteq V$.)  Now by linearity and the fact that $(u_1, u_2) = 0$, we may assume $u_2 \in V_1$. Scaling $u_2$, we may write $u_2 = T + y'$, with $(y',T) = 0$ and $q(y') = -q(T)$.  Since $q(y')= -q(T)$ is determined, there is a single orbit under $\mathrm{Stab}(T)$.\end{proof}

The next thing to do is to compute $\mathrm{Stab}(T) \cap P_U$.  This stabilizer is computed in the following lemma.  Suppose in coordinates $T=(\alpha,h,\delta)$, i.e. $T = \alpha e_2 + h + \delta f_2$ with $h \in V_0$. Recall that under our assumptions, $\alpha \delta \neq 0$.  Since $(f_1,T) = 0$ but $(f_2,T) \neq 0$, if $g \in \mathrm{Stab}(T) \cap P_U$, then $g$ stabilizes the line spanned by $f_1$, i.e., $g \in P=P_{F f_1}$.  

Write $P = MN$ for the Levi decomposition of $P$, and recall the maps $n: V_1 \simeq N$ and $m: \GL_1 \times \SO(V_1) \simeq M$.
\begin{lemma} Inside $\SO(V)$, $\mathrm{Stab}(T) \cap P_U = \SO(V_0)' m((\lambda,1))n(x)$ where $(x,T) = 0, \lambda \in \GL_1,$ and $\SO(V_0)' \simeq \SO(V_0)$.  More precisely, $\SO(V_0)'$ consists of the elements
\[\SO(V_0)' = \{ m_0 n_{V_0}(x_0): m_0 \in \SO(V_0) \text{ and } h \cdot m_0 + \alpha x_0 = h\}.\]
That is, $x_0 = h \cdot (1-m_0)/\alpha$ is uniquely determined by $m_0$.  Recall here that $T= (\alpha, h, \delta)$. \end{lemma}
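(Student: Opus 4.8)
The plan is to reduce the lemma to two parabolic computations — one in $\SO(V)$, one in $\SO(V_1)$ — followed by a short check of a quadratic identity. By the observation preceding the lemma, every $g \in \Stab(T) \cap P_U$ already lies in $P = P_{Ff_1}$, so by subsection~\ref{subsec:parabolic} we may write $g = m(\lambda, y)\, n(x)$ uniquely with $\lambda \in \GL_1$, $y \in \SO(V_1)$, and $x \in V_1$. I would then translate the two conditions ``$g \in P_U$'' and ``$g \in \Stab(T)$'' into conditions on $(\lambda, y, x)$.

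For $g \in P_U$ (with $U = Ff_1 \oplus Ff_2$): the action formulas of subsection~\ref{subsec:parabolic} show $n(x)$ fixes $f_1$ and sends $f_2$ to $f_2 - (f_2, x)\, f_1 \in U$, so $n(x)$ always stabilizes $U$; hence $g \in P_U$ if and only if $m(\lambda, y) \in P_U$, which — since $f_2 \cdot m(\lambda, y) = f_2 \cdot y$ lies in $V_1 \cap U = Ff_2$ precisely when $y$ stabilizes the line $Ff_2 \subseteq V_1$ — holds precisely when $y$ lies in the parabolic $P_{Ff_2}$ of $\SO(V_1)$ stabilizing $Ff_2$. For $g \in \Stab(T)$: since $T \in V_1$, one has $T \cdot m(\lambda, y) = T \cdot y \in V_1$ and then $T \cdot g = T \cdot y - (T \cdot y, x)\, f_1$, so $g$ fixes $T$ if and only if $T \cdot y = T$ and $(T, x) = 0$. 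Thus $\Stab(T) \cap P_U$ consists exactly of the $m(\lambda, y)\, n(x)$ with $\lambda \in \GL_1$ arbitrary, $(T, x) = 0$, and $y \in \Stab_{\SO(V_1)}(T) \cap P_{Ff_2}$; since $m(\lambda, y) = m(1, y)\, m(\lambda, 1)$, this already has the asserted shape once we identify $\Stab_{\SO(V_1)}(T) \cap P_{Ff_2}$ with $\SO(V_0)'$.

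That identification is the only place a real computation is needed. Writing a general element of $P_{Ff_2} \subseteq \SO(V_1)$ as $d(t)\, m_0\, n_{V_0}(x_0)$ — where $d(t)$ acts on $V_1 = Fe_2 \oplus V_0 \oplus Ff_2$ by $e_2 \mapsto t e_2$, $f_2 \mapsto t^{-1} f_2$, fixing $V_0$; $m_0 \in \SO(V_0)$; $x_0 \in V_0$; and $n_{V_0}$ is the unipotent map of subsection~\ref{subsec:parabolic} for $\SO(V_1)$ and its line $Ff_2$ — and applying it to $T = \alpha e_2 + h + \delta f_2$ yields
\[
T \cdot d(t)\, m_0\, n_{V_0}(x_0) = t\alpha\, e_2 + \big( h \cdot m_0 + t\alpha\, x_0 \big) + \big( t^{-1}\delta - (h \cdot m_0,\, x_0) - t\alpha\, q_0(x_0) \big) f_2 .
\]
Equating with $T$: the $e_2$-coefficient forces $t = 1$ (here $\alpha \neq 0$ because $q(T) = \alpha\delta + q_0(h) \notin q_0(V_0)$); the $V_0$-coefficient then forces $x_0 = h \cdot (1 - m_0)/\alpha$; and with these the $f_2$-coefficient equation collapses to $(h \cdot m_0,\, x_0) + \alpha\, q_0(x_0) = 0$, which holds automatically via the identities $q_0(h - h m_0) = 2 q_0(h) - (h, h m_0)$ and $(h m_0, h m_0) = (h, h)$ (both valid since $m_0$ preserves $q_0$ and its bilinear form). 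Hence $\Stab_{\SO(V_1)}(T) \cap P_{Ff_2} = \{\, m_0\, n_{V_0}\big(h(1 - m_0)/\alpha\big) : m_0 \in \SO(V_0)\,\}$, which is exactly the subgroup $\SO(V_0)'$ of the statement, isomorphic to $\SO(V_0)$ via $m_0 \mapsto m_0\, n_{V_0}\big(h(1 - m_0)/\alpha\big)$. The reverse inclusion — every $\SO(V_0)'\, m(\lambda, 1)\, n(x)$ with $(T, x) = 0$ lies in $\Stab(T) \cap P_U$ — follows by reversing the same computations.

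I expect no conceptual difficulty; the one thing to be careful about is keeping the two parabolics $P \subseteq \SO(V)$ and $P_{Ff_2} \subseteq \SO(V_1)$ (and their coordinate systems) straight, and carrying out the small cancellation that makes the $f_2$-coefficient equation automatic.
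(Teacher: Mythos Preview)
Your proposal is correct and follows exactly the approach the paper indicates: reduce to $\Stab(T)\cap P$ via the observation preceding the lemma, then carry out the parabolic computation. The paper's own proof simply reads ``As explained above, $\mathrm{Stab}(T)\cap P_U\subseteq\mathrm{Stab}(T)\cap P$. Using this, one concludes the lemma by a straightforward computation''; you have supplied that computation in full, including the verification that the $f_2$-coefficient condition is automatic.
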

\begin{proof} As explained above, $\mathrm{Stab}(T) \cap P_U \subseteq \mathrm{Stab}(T) \cap P$.  Using this, one concludes the lemma by a straightforward computation.\end{proof}

\subsubsection{Unfolding} With the above orbit and stabilizer calculations, we can unfold the global integral $I(\phi,\Phi_{X},\alpha,s)$.
Recall $H = \SO(V) \times \SO(V_0)$.  Define
\[\phi_T(g) = \int_{V_1(F)\backslash V_1(\A)}{\psi^{-1}((T,x))\phi(n(x)g)\,dx}\]
and
\[\phi_{T,\alpha}(g) = \int_{\SO(V_0)'(F)\backslash \SO(V_0)'(\A)}{\alpha(h)\phi_T(hg)\,dh}.\]
Finally, set $N_T \subseteq N$ to be $n\left(\{x \in V_1: (x,T) = 0\}\right)$, and define
\[\phi_{N_T}(g) = \int_{N_T(F)\backslash N_T(\A)}{\phi(n(x)g)\,dx}.\]

One has
\begin{align*} I(\phi,\Phi_{X},\alpha, s) &= \int_{H(F) \backslash H(\A)}{P_T^\beta(g_1) \phi(g_1) E_X(g,s,\Phi_X) \alpha(g_2) \, dg} \\ &= \int_{\left((\mathrm{Stab}(T) \cap P_U) \times pr(*)\right)(F) \backslash H(\A)}{\beta(T g_1) \phi(g_1) f_X(g,s,\Phi_{X}) \alpha(g_2) \, dg} \\ &= \int_{(\GL_1(F) \SO(V_0)'(F)) \times pr(*)(F)N_T(\A) \backslash H(\A)}{\beta(T g_1) \phi_{N_T}(g_1) f_X(g,s,\Phi_{X}) \alpha(g_2) \, dg} \\ &= \int_{(\SO(V_0)'(F)) \times pr(*)(F)N_T(\A) \backslash H(\A)}{\beta(T g_1) \phi_{T}(g_1) f_X(g,s,\Phi_{X}) \alpha(g_2) \, dg} \\ &= \int_{\Delta(\SO(V_0))(\A)N_T(\A) \backslash H(\A)}{\beta(T g_1) \phi_{T,\alpha}(g_1) f_X(g,s,\Phi_{X})\, dg}\end{align*}
where the $pr(*)$ projects $\SO(V_0)'$ to $\SO(V_0)$ and $\Delta(\SO(V_0)) = \{(h',h) \in \SO(V_0)' \times \SO(V_0)\}$.

Hence,
\begin{align}\nonumber I(\phi,\Phi_{X},\alpha,s) &= \int_{N_T(\A)\backslash \SO(V)(\A)}{\beta(T g) \phi_{T,\alpha}(g) f_{X}((g,1),s,\Phi_X)\,dg} \\ &= \label{unfolded} \int_{N_T(\A)\backslash \GSpin(V)(\A)}{\beta(T g) \phi_{T,\alpha}(g) |\nu(g)|^{s} \Phi_{X}(1_{X} \cdot (g,1))\,dg}.\end{align}

\subsubsection{Unramified computation}
We now give the unramified computation corresponding to the unfolded integral (\ref{unfolded}). Suppose then that $F$ is a nonarchimedean local field for which $V$ is quasisplit, and $\pi$ is a spherical representation of $\SO(V)(F)$ with space $V_{\pi}$.  As in the case of the Bessel integral, denote by $L: V_{\pi} \rightarrow \C$ a linear functional satisfying $L(n(x) \cdot v) = \psi((T,x))L(v)$ for all $v \in V_{\pi}$, and let $v_0$ be a spherical vector in $V_{\pi}$.  The local integral that must be computed is
\begin{equation}\label{unramL} I(L,v_0,s) = \int_{N_T(F)\backslash \GSpin(V)(F)}{\beta(Tg)L(g \cdot v_0) |\nu(g)|^{s}\Phi_{X}(1_X \cdot (g,1))\,dg}\end{equation}
where $\beta$ and $\Phi_{X}$ are now assumed to be the characteristic functions of fixed lattices.

 We first simplify the integral (\ref{unramL}), and then relate this simplification to $L(\pi,Std,s)$ by an application of Lemma \ref{FClemma}.

The first step is to compute the integral
\[\beta_T(g) :=\int_{N_T(F)\backslash N(F)}{\psi((T,x))\beta(T n(x)g)\,dx}.\]
Denote by $\Lambda_1$ the fixed maximal lattice in $V_1$.
\begin{lemma} If $m = m(\lambda,y)$ then $\beta_T(m) = |\lambda| \charf( \lambda \in \mathcal{O}_F)\charf(T \cdot y \in \Lambda_1)$. \end{lemma}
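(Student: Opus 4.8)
The plan is to unwind the definition of $\beta_T$ directly, using the explicit action of $N$ and $M$ on $V$ recorded in subsection \ref{subsec:parabolic}, and then to recognize the resulting integral as a characteristic-function-times-volume computation. First I would write $m = m(\lambda,y)$ and compute $T \cdot n(x) m$ using the formula $(\alpha,v,\delta)\cdot n(x) = (\alpha, v+\alpha x, \delta - (v,x) - \alpha q(x))$ together with $e_1 \cdot m(\lambda,y) = \lambda e_1$, $v \cdot m(\lambda,y) = v\cdot y$, $f_1 \cdot m(\lambda,y) = \lambda^{-1} f_1$. Here one must be careful that $T \in V_1$, so writing $T$ in the coordinates attached to the line $Ff_1$ (not the line $Ff_2$ used elsewhere) it has zero $e_1$- and $f_1$-components; thus $T \cdot n(x) = T + (\text{stuff})f_1$ lands, after applying $m$, in $V$ with a controlled $f_1$-component and $V_1$-component $T\cdot y$. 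Since $\beta$ is the characteristic function of the fixed maximal lattice $\Lambda(V)$, the condition $\beta(Tn(x)m) \neq 0$ then splits into: the $V_1$-part $T\cdot y$ lies in $\Lambda_1$, and the $f_1$-component lies in $\mathcal{O}_F$.

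Next I would parametrize $N_T \backslash N$. By definition $N_T = n(\{x \in V_1 : (x,T) = 0\})$, so $N_T\backslash N$ is identified with the quotient line $V_1/T^\perp \cong F$ via the pairing $x \mapsto (T,x)$; the point of quotienting by $N_T$ is exactly that the integrand $\psi((T,x))\beta(Tn(x)m)$ depends only on $(T,x)$ once we know $T\cdot y \in \Lambda_1$. Carrying out the change of variables $x \mapsto (T,x) =: u \in F$, the $f_1$-component of $Tn(x)m$ becomes an affine-linear function of $u$ with leading coefficient a unit times $\lambda^{-1}$ (coming from $f_1 \cdot m = \lambda^{-1}f_1$ and the $-(T,x)$ term), so the integrality condition on the $f_1$-component reads $u \in \lambda\,\mathcal{O}_F$ (after absorbing units). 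The remaining integral is $\int_{u \in \lambda\mathcal{O}_F} \psi(u)\,du$; since $\psi$ has conductor $\mathcal{O}_F$ and $\lambda\mathcal{O}_F \subseteq \mathcal{O}_F$ exactly when $\lambda \in \mathcal{O}_F$ (in which case $\psi|_{\lambda\mathcal{O}_F} \equiv 1$, and otherwise the character is nontrivial and the integral vanishes), this evaluates to $|\lambda|\,\charf(\lambda \in \mathcal{O}_F)$. Combining with the condition $T\cdot y \in \Lambda_1$ extracted above gives $\beta_T(m(\lambda,y)) = |\lambda|\,\charf(\lambda \in \mathcal{O}_F)\charf(T\cdot y \in \Lambda_1)$.

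I expect the main obstacle to be bookkeeping rather than anything conceptual: one has two different isotropic lines in play ($Ff_1$, whose parabolic defines $N$ and $M$, and $Ff_2$, used in the coordinates for $T$ in the preceding subsection), and one must correctly express $T \in V_1$ relative to the $Ff_1$-decomposition $V = Fe_1 \oplus V_1 \oplus Ff_1$, track how $q(x)$ and $(v,x)$ contribute to the $f_1$-component under $n(x)$, and verify that the leading coefficient of the affine function $u \mapsto (f_1\text{-component})$ is genuinely a unit multiple of $\lambda^{-1}$ so that the substitution $u \in \lambda\mathcal{O}_F$ is exact. Once that linear-algebra is pinned down, the Fourier-analytic step $\int_{\lambda\mathcal{O}_F}\psi(u)\,du = |\lambda|\charf(\lambda\in\mathcal{O}_F)$ and the extraction of the $\charf(T\cdot y \in \Lambda_1)$ factor are immediate. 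A minor point to check is that the Haar measure on $N_T\backslash N \cong F$ is normalized compatibly with $du$ on $F$, which is just the standing convention that $\mathcal{O}_F$ has measure $1$; with an orthonormal-adapted basis of $\Lambda_1$ this is automatic and introduces no extra constant.
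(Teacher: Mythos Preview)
Your proposal is correct and follows essentially the same approach as the paper: both compute $T\cdot n(x)\,m(\lambda,y) = (0,\,T\cdot y,\,-(T,x)/\lambda)$ in the coordinates $V = Fe_1 \oplus V_1 \oplus Ff_1$ and read off the result. The paper's proof is simply terser, writing down this formula and asserting the conclusion, whereas you spell out the parametrization of $N_T\backslash N$ by $u=(T,x)$ and the Fourier step $\int_{\lambda\mathcal{O}_F}\psi(u)\,du = |\lambda|\,\charf(\lambda\in\mathcal{O}_F)$ explicitly; note also that the $f_1$-component is exactly $-(T,x)/\lambda$, so your ``unit times $\lambda^{-1}$'' caution is unnecessary.
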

\begin{proof} The first step is to evaluate $Tn(x)m$, where $m \in M \subseteq P$, the parabolic stabilizing the line spanned by $f_1$.  If
\[m = \left(\begin{array}{ccc} \lambda & & \\ & y & \\ & & \lambda^{-1} \end{array}\right)\]
in block form via the decomposition $V = Fe_1 \oplus V_1 \oplus F f_1$, then $T n(x) m = (0, T \cdot y, -(T,x)/\lambda)$.  Hence, 
\[\beta_T(m) = |\lambda| \charf( \lambda \in \mathcal{O}_F)\charf(T \cdot y \in \Lambda_1),\]
completing the computation. \end{proof}

Note that the above $m$ is the image of $m(\lambda, y)$ from $\GSpin(V)$. We conclude
\begin{align*} I(L,v_0,s) &= \int_{N_T(F)\backslash \GSpin(V)(F)}{\beta(Tg)L( g\cdot v_0)|\nu(g)|^{s}\Phi_{X}(1_{X} \cdot (g,1))\,dg} \\ &=\int_{N(F)\backslash \GSpin(V)(F)}{\beta_{T}(g)L(g \cdot v_0)|\nu(g)|^{s}\Phi_X(1_{X} \cdot (g,1))\,dg} \\ &= \int_{M(F)}{\delta_{P}^{-1}(m) |\lambda| \charf(\lambda, T \cdot y) L(m \cdot v_0) |\nu(m)|^{s}\Phi_X(1_{X} \cdot (m,1))\,dm} \\&= \int_{M(F)}{\delta_{P}^{-1}(m) |\nu(y)|^{-1} \charf(\lambda, T \cdot y) L(m\cdot v_0) |\nu(m)|^{s+1}\Phi_X(1_{X} \cdot (m,1))\,dm}. \end{align*}
Here, $m = m(\lambda, y)$.

Next, we evaluate $\Phi_X(1_{X} \cdot m(\lambda,y))$. 
\begin{lemma}\label{charyLem} At almost all finite places, one has $\charf(T \cdot y)\Phi_X(1_{X} \cdot m(\lambda,y))= \charf(y, T \cdot y)$. \end{lemma}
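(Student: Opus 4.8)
The plan is to reduce the left-hand side to $\Phi_X$ evaluated on the image $\overline{y}\in S_X(W)$ of $y$, independently of $\lambda$. Writing $m(\lambda,y)=m_1^*(\lambda)y$ with $m_1^*(\lambda)=e_1f_1+\lambda f_1e_1$, we have $f_1\in U\subseteq X$, so $f_1e_1\in X\Clif(W)$, while $e_1f_1=(e_1,f_1)-f_1e_1=1-f_1e_1$; hence $m_1^*(\lambda)\equiv1\pmod{X\Clif(W)}$ and $1_X\cdot m(\lambda,y)=1_X\cdot y=\overline y$. Thus $\Phi_X(1_X\cdot m(\lambda,y))=\Phi_X(\overline y)$, and the asserted identity reduces to the claim: at almost all finite places, if $T\cdot y\in\Lambda_1$ then $\Phi_X(\overline y)\neq0$ precisely when $y\in\Clif(\Lambda_1)$. (If $T\cdot y\notin\Lambda_1$, both sides of the identity vanish, so nothing more is needed.) The implication ``$y\in\Clif(\Lambda_1)\Rightarrow\Phi_X(\overline y)\neq0$'' is immediate, since at almost all places the fixed global lattices satisfy $\Lambda_1\subseteq\Lambda(W)$, so $y\in\Clif(\Lambda(W))$ and $\overline y$ lies in the image of $\Clif(\Lambda(W))$, on which $\Phi_X$ is supported.

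For the converse I would follow the proof of Lemma \ref{pullbackPhi}. Decompose $V_1=Fe_2\oplus V_0\oplus Ff_2$ and write $y=a+e_2b+cf_2+e_2df_2$ with $a,d\in\Clif^{+}(V_0)$ and $b,c\in\Clif^{-}(V_0)$. Using $f_1,f_2\in U\subseteq X$ and $\Delta(V_0)\subseteq X$, together with the relations $cf_2\equiv0$ and $e_2df_2\equiv d\pmod{X\Clif(W)}$ and the compatibility of Lemma \ref{pullbackPhi} (which in particular makes $\Clif(V_0)\hookrightarrow S_X(W)$ faithful and lattice-preserving), one computes that $y\mapsto\overline y$ annihilates exactly the span of the $cf_2$ and the $e_2df_2-d$, and --- away from the finitely many ramified places and from $p=2$, where one uses the lattice factorization $\Clif(\Lambda(W))=\Clif(\Lambda(U^\vee))\otimes\Clif(\Lambda(V_0^{-}))\otimes\Clif(\Lambda(X))$ already exploited in Lemma \ref{pullbackPhi} --- that it carries $\Clif(\Lambda_1)$ onto the part of the image of $\Clif(\Lambda(W))$ that it meets. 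Hence $\Phi_X(\overline y)\neq0$ forces $y=y_0+cf_2+(e_2df_2-d)$ with $y_0\in\Clif(\Lambda_1)$ but $c\in\Clif^{-}(V_0)$, $d\in\Clif^{+}(V_0)$ a priori non-integral.

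It remains to show that $T\cdot y\in\Lambda_1$ forces $c,d$ integral, i.e.\ forces $y\in\Clif(\Lambda_1)$. The running assumption $q(T)\notin q_0(V_0)$ forces $T=\alpha e_2+h+\delta f_2$ with $h\in V_0$ and $\alpha,\delta\neq0$ (as in the proof of Lemma \ref{PTorbit}), and at almost all places we may take $T\in\Lambda_1$ with $\alpha,\delta\in\mathcal{O}_F^\times$. I would then examine $\nu(y)\,(T\cdot y)=y^*Ty$: since $f_2e_2=1-e_2f_2$, the $f_2$-on-the-right parts of $y$ (which carry the denominators of $c$) combine with the term $\alpha e_2$ of $T$, and the $e_2$-parts of $y$ combine with $\delta f_2$, producing lower-degree contributions in which those denominators --- scaled by the possibly non-integral $\nu(y)$ --- reappear in the $e_2$- and $V_0$-components of $T\cdot y$; as $\alpha,\delta$ are units this is incompatible with $T\cdot y\in\Lambda_1$ unless $c,d$ are integral. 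An alternative that avoids this bookkeeping: since $q(T\cdot y)=q(T)$ is a unit, a Witt's-theorem argument in the spirit of Lemma \ref{lem:redToKS} produces $k\in K(\Lambda_1)$ with $T\cdot(yk)=T$; all three conditions in the claim are unchanged under $y\mapsto yk$, so one may assume $y\in\Stab(T)$, which is attached to the anisotropic space $T^\perp\cap V_1$ of Lemma \ref{PTorbit}(\ref{PT:item1}), bringing the pullback identity back into play.

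The main obstacle is this last step: $\GSpin(V_1)$ is strictly larger than the $\GSpin(V_0)$ to which Lemma \ref{pullbackPhi} literally applies, so $\overline y$ determines $y$ only modulo the rank-$2^{\dim V_0}$ ``kernel'' of extreme Clifford coordinates, and it is precisely the generic vector $T$ with $q(T)\notin q_0(V_0)$ that recovers them. Checking that this works at almost all --- but genuinely not all --- finite places is the crux, and is the reason both for the ``almost all places'' qualifier and for the simplifying assumptions ($q(T)\notin q_0(V_0)$, and treating $p=2$ as a bad prime) flagged earlier in the section.
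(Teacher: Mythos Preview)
Your first reduction --- that $1_X\cdot m(\lambda,y)=1_X\cdot y$ because $m_1^*(\lambda)\equiv 1\pmod{X\Clif(W)}$ --- is exactly the paper's first step (the paper writes $m(\lambda,y)=y+f_1(\lambda e_1-e_1)y$ and observes $f_1\in X$, which is the same computation). The easy direction $y\in\Clif(\Lambda_1)\Rightarrow\Phi_X(\overline y)\neq0$ is also common to both.

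Where you diverge is in the converse. Your main line of attack decomposes $y=a+e_2b+cf_2+e_2df_2$ and tries to recover the integrality of $c,d$ from the combination of $\Phi_X(\overline y)\neq0$ and $T\cdot y\in\Lambda_1$; you yourself flag this as bookkeeping and do not carry it through. By contrast, the paper does \emph{not} attempt this direct computation at all: it invokes the Witt's-theorem reduction (\ref{eqSets}) (coming from Lemma \ref{lem:redToKS}) to replace $y$ by $yk$ with $k\in K(\Lambda_1)$ so that $T\cdot y=T$, i.e.\ so that $y\in\GSpin(T^\perp\cap V_1)$, and then applies Lemma \ref{pullbackPhi}. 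This is precisely what you call your ``alternative that avoids this bookkeeping''; that alternative \emph{is} the paper's proof, and your sketch of it is accurate. So your proposal is correct, but the efficient route is the one you relegate to a parenthetical --- the direct decomposition you lead with is unnecessary and, as you note, would require genuine work to make precise.

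One small point: Lemma \ref{pullbackPhi} is literally stated for $h\in\GSpin(V_0)$, whereas after the Witt reduction $y$ lies in $\GSpin(T^\perp\cap V_1)$, a space of dimension $\dim V_0+1$. The paper passes over this in one line (``The lemma now follows by an application of Lemma \ref{pullbackPhi}''), and the proof of Lemma \ref{pullbackPhi} does go through for any anisotropic subspace sitting appropriately relative to $X$; you are right to notice, in your final paragraph, that some care is needed here, but the issue is resolved by the Witt reduction rather than by the direct coordinate argument you attempt first.
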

\begin{proof} First, note that
\[m(\lambda,y) = (e_1f_1 + \lambda f_1e_1)y = (e_1f_1 +f_1e_1 -f_1e_1 + \lambda f_1e_1) y = y + f_1(\lambda e_1 -e_1)y.\]
Since $f_1 \in X$, the image of $1_{X} \cdot m(\lambda,y)$ in the spinor module $S_X$ is the same as the image of $y$.  Now by (\ref{eqSets}), in the presence of the condition $\charf(T \cdot y)$, we may assume $y$ is in $\GSpin$ of the subspace of $V_1$ orthogonal to $T$.  The lemma now follows by an application of Lemma \ref{pullbackPhi}. \end{proof} 

Thus applying Lemma \ref{charyLem}, we get
\begin{equation} \label{Isimplified} I(L,v_0,s-1) = \int_{M(F)}{\delta_{P}^{-1}(m) |\nu(y)|^{-1} \charf(\lambda, T \cdot y, y) L(m\cdot v_0) |\nu(m)|^{s}\,dm}.\end{equation}
Comparing to (\ref{IpBessel}), observe that this is the same local integral that arose from the Bessel integral, so our manipulations below will also complete the unramified computation for that integral.  
 
For later use, we note that
\begin{equation}\label{ImodZ} I(L,v_0,s-1) = \zeta(\omega_{\pi},2s-2)\int_{M(F)}{\delta_{P}^{-1}(m) |\nu(y)|^{-1} \charf(\lambda, T \cdot y,\val(y)=0) L(m\cdot v_0) |\nu(m)|^{s}\,dm}\end{equation}
by integrating out the center.

\subsubsection{$\Phi$ Fourier coefficient} To complete the unramified computation, we will apply Lemma \ref{FClemma}.  One has
\begin{align*} L(v_0) \frac{L(\pi,Std,s+1-\dim(V)/2)}{d^V(s)} &= L(v_0) \int_{\GSpin(V)(F)}{\Phi(g)|\nu(g)|^{s} \langle v_0^\vee, \pi(g) v_0\rangle \,dg} \\ &= \int_{\GSpin(V)(F)}{\Phi(g)|\nu(g)|^{s} L(g \cdot v_0) \,dg} \\ &= \int_{M(F)}{\delta_{P}^{-1}(m)|\nu(m)|^{s} S_T(\Phi)(m) L(m \cdot v_0)\,dm}. \end{align*}
Applying Lemma \ref{FClemma}, this becomes
\[\int_{M(F)}{\delta_{P}^{-1}(m)|\nu(y)|^{-1}|\nu(m)|^{s} \charf(\lambda,\val(y)=0, T\cdot y) L(m \cdot v_0)\,dm}.\]
Comparing to (\ref{ImodZ}), we conclude
\begin{align*}I(L,v_0,s-1) &= L(v_0) \zeta(\omega_{\pi},2s-2) \frac{L(\pi,Std,s+1-\dim(V)/2)}{d^V(s)} \\ &= L(v_0) \frac{L(\pi,Std,s+1-\dim(V)/2)}{d^{V_1}(s)}.\end{align*}

\bibliography{GJ_bib} 
\bibliographystyle{amsalpha}
\end{document}